\documentclass[11pt]{amsart}

\usepackage{amssymb,hyperref,enumitem}
\usepackage{amsmath,amsthm}
\usepackage{color,comment}
\usepackage[normalem]{ulem}

\numberwithin{equation}{section}
\newtheorem{thrm}{Theorem}[section]

\newtheorem{lem}[thrm]{Lemma}
\newtheorem{cor}[thrm]{Corollary}
\newtheorem{prop}[thrm]{Proposition}

\newtheorem{example}[thrm]{Example}

\theoremstyle{definition}
\newtheorem{defin}[thrm]{Definition}

\newtheorem{remark}[thrm]{Remark}

\newcounter{case}
\newenvironment{case}{\refstepcounter{case}
	\medskip \noindent {\bf Case \textup{\thecase.  }}}{\upshape}
\renewcommand{\thecase}{\arabic{case}}

\newcounter{subcase}

\renewcommand{\thesubcase}{\alph{subcase}}

\def\AGL{{\rm AGL}}
\def\PSU{{\rm PSU}}
\def\PSL{{\rm PSL}}
\def\PGL{{\rm PGL}}
\def\GL{{\rm GL}}
\def\Ker{{\rm Ker}}

\def\fix{{\rm fix}}
\def\Aut{{\rm Aut}}

\def\la{\langle}
\def\ra{\rangle}
\def\Z{{\mathbb Z}}
\def\soc{{\mathsf{Soc}}}

\def\supp{{\rm supp}}
\newcommand{\sg}[1]{\langle{#1}\rangle}
\newcommand{\inn}{\mathrm{Inn}}

\newcommand{\strike}[1]{\setbox0\hbox{\hskip1pt \ignorespaces#1\hskip1pt}\hbox to \wd0{\copy0\hss\vrule width\wd0 height2.5pt depth-1.75pt}}

\title[CI-groups for ternary structures]{More CI-groups with respect to ternary relational structures}

\author{Ted Dobson}
\address{
	FAMNIT and IAM, University of Primorska\\
	Muzejska trg 2\\
	Koper 6000, Slovenia}
    \email{ted.dobson@upr.si}

\author{Joy Morris}
\address{Department of  Math and Computer Science\\
	University of Lethbridge\\
	Lethbridge, AB T1K 3M4, Canada}
\email{joy.morris@uleth.ca}

\author{Mikhail Muzychuk}
\address{Department of  Mathematics\\
	Ben-Gurion University of the Negev\\
	Beer-Sheva, Israel}
\email{muzychuk@bgu.ac.il}

\author{Pablo Spiga}
\address{Dipartimento di Matematica e Applicazioni\\
	University of Milano-Bicocca\\
	Via Cozzi 55\\
	20125 Milano, Italy
}
\email{pablo.spiga@unimib.it}
\begin{document}
	
	

	\begin{abstract}
		We explicitly determine all CI-groups with respect to ternary relational structures that have the form $C \times D$, where $C$ is cyclic and $D$ is either a dicyclic group whose order is not divisible by $3$ or a dihedral group.  Such groups are also CI-groups with respect to graphs and digraphs.
	\end{abstract}
	
	\maketitle
	
	\section{Introduction}
	The modern Cayley isomorphism problem began in 1967 when \'Ad\'am \cite{Adam1967} conjectured that any two circulant graphs of order $n$ are isomorphic via a group automorphism of the cyclic group of order $n$. The same problem had already been considered by Bays and Lambossy \cite{Bays1931,Lambossy1931} in the context of designs. 
	Indeed, the notion of a Cayley graph extends naturally to other types of combinatorial objects by calling such an object a {\bf Cayley object} if its automorphism group contains the regular representation of $G$ on its underlying set. The corresponding question becomes:
	\begin{center}
		{\it For a fixed class of combinatorial objects $\mathcal{C}$, determine which groups have the property that any two Cayley objects on $G$ are isomorphic if and only if they are isomorphic by a group automorphism of $G$.}
	\end{center}
	A group with this property is called a {\bf CI-group with respect to the class $\mathcal{C}$}.
	
	From a model-theoretic perspective, the most natural combinatorial objects are relational structures. Let $k$ be a positive integer. A {\bf $k$-ary relation} on a set $X$ is a subset of $X^k$, and a {\bf $k$-ary relational structure} on $X$ is a collection of such relations. Since every $k$-ary relational structure gives rise to a $(k+1)$-ary one, any CI-group with respect to $(k+1)$-ary relational structures is also a CI-group with respect to $k$-ary structures. Thus, this establishes a hierarchy among groups. Note also that when $k=2$, the $2$-ary relational structures are simply coloured digraphs. 
	
	In \cite{Palfy1987}, P\'alfy completely characterised the CI-groups with respect to 4-ary relational structures, and showed moreover that the same groups are CI-groups with respect to all $k$-ary relational structures for $4 \le k \le n$.
	
	\begin{thrm}[P\'{a}lfy, \cite{Palfy1987}]
		A group is a CI-group with respect to $4$-ary relational structures if and only if it is a CI-group with respect to $k$-ary relational structures for every $4 \le k \le n$.
		
		Furthermore, the groups with this property are precisely the cyclic groups $\Z_n$ where $n=4$ or $\gcd(n,\varphi(n))=1$ (where $\varphi$ is Euler’s totient function).
	\end{thrm}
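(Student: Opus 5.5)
We prove the two directions of Pálfy's theorem separately, working throughout with the standard reformulation of the CI-property: $G$ is a CI-group with respect to a class of objects if and only if, whenever $\varphi\in\mathrm{Sym}(G)$ satisfies $\varphi^{-1}G_R\varphi\le \Aut(X)$ for a Cayley object $X$, the regular subgroups $G_R$ and $\varphi^{-1}G_R\varphi$ are conjugate in $\Aut(X)$. This is Babai's lemma, and its proof is formal for any relational structure.

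For $k$-ary structures with $k\ge 4$ we may choose $X$ to make $\Aut(X)$ small. In fact the relevant "closed" groups are the $k$-closed permutation groups. The key observation is that for every group $H\le\mathrm{Sym}(n)$ we can take $X$ to be the set of $H$-orbits on $G^k$ as the relations. Then $\Aut(X)$ is the $k$-closure $H^{(k)}$, and every permutation group is $(n-1)$-closed. So being CI for all $k$ with $4\le k\le n$ amounts to the following statement, with $k$ ranging over that interval:
\begin{center}
\emph{any two regular copies of $G$ inside a $k$-closed group are conjugate in it.}
\end{center}

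\textbf{Sufficiency.} Let $n=4$ or $\gcd(n,\varphi(n))=1$, and let $H\le S_n$ contain two regular cyclic subgroups $C,C'$. We show that $C$ and $C'$ are conjugate in $H$ itself; this handles every $k$ at once.

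The main step uses $\gcd(n,\varphi(n))=1$, which forces $n$ squarefree with no prime $p\mid n$ dividing $q-1$ for another prime $q\mid n$. Fix a prime $p\mid n$ and let $P,P'$ be the Sylow $p$-subgroups of $C,C'$. Each is semiregular of order $p$ and lies in a Sylow $p$-subgroup of $H$. After conjugating by Sylow's theorem we may assume both lie in a common Sylow $p$-subgroup $S$ of $H$.

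We then induct on the number of primes, using the block system of $P$-orbits to pass to a quotient action of degree $n/p$. The coprimality condition guarantees that the relevant normalizers act compatibly, so a Hall-subgroup/Schur–Zassenhaus type argument inside $N_H(P)$ glues the conjugating elements.

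The case $n=4$, that is $\Z_4$, is a direct check over the subgroups of $S_4$ containing a $4$-cycle. These are $\Z_4$, $D_4$ and $S_4$, and in each of them all $4$-cycle subgroups are conjugate.

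\textbf{Necessity.} Here we construct counterexamples. First suppose $G$ is noncyclic, or $G$ is cyclic with $\gcd(n,\varphi(n))>1$ and $n\ne 4$. In either case there is a prime $p$ and a group $G$ containing a subgroup structure that allows a nonconjugate regular embedding.

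If $p^2\mid n$, take the wreath product $\Z_p\wr\Z_{n/p}$ or the group $\Z_{p^2}$ versus $\Z_p\times\Z_p$ type configurations. For instance, in $\Z_p\wr\Z_p$ there are regular subgroups isomorphic to $G$ that are not conjugate, via Pálfy's classical construction.

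If $p\mid q-1$ for primes $p,q\mid n$, use $\Z_q\rtimes\Z_p$ acting with $\Z_{pq}$, and build a $4$-closed group with two nonconjugate regular copies of $G$. The $4$-ary relations then encode the orbits of this group on quadruples.

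For noncyclic $G$, one needs a separate argument reducing to the cases $G=\Z_p^2$ or $G$ nonabelian. This uses the fact that the ternary structure distinguishing $(x,y,xy^{-1}z)$-type relations encodes the group multiplication; isomorphisms preserving it are affine maps, and this yields a violation.

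\textbf{Main obstacle.} I expect the hardest part to be showing that the constructed groups are genuinely $4$-closed, so that $4$ (and not larger $k$) suffices. The plan is to verify that the stabilizer of $3$ generic points in the constructed group is trivial, giving $3$-closure and hence $4$-closure.
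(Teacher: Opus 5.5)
Your overall frame is the right one: Babai's lemma, proving that two regular cyclic subgroups are conjugate inside an arbitrary overgroup so that every $k$ is handled at once, and holomorph or wreath-product counterexamples for necessity. The paper only quotes this theorem from Pálfy, recalling in Section~3 the tool he used. Both directions of your proposal have genuine gaps.

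\textbf{Sufficiency.} The central step is missing. Putting $P\le C$ and $P'\le C'$ into a common Sylow $p$-subgroup $S$ of $H$ gives neither $P=P'$ nor $\mathsf{Orb}(P,X)=\mathsf{Orb}(P',X)$ unless $|S|=p$. So the $P$-orbits need not form a block system of $C'$, let alone of $\langle C,C'\rangle$. If $\langle C,C'\rangle$ is primitive there is nothing to induct on at all.

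This is exactly where Pálfy's proof does its work. One works in $\langle C,C'\rangle$, not in $H$, which may be $S_n$ with huge Sylow subgroups. In the primitive case, Burnside--Schur makes the group $2$-transitive of square-free degree. The classification-based lemma recalled in Section~3 then applies: a $2$-transitive group of square-free degree $n$ other than $A_n,S_n$ has a Sylow $p$-subgroup of order $p$ for some $p\mid n$. Together with a direct check in $A_n$, this is what lets Sylow's theorem force $P=P'$ after conjugation. In the imprimitive and lifting steps, the condition $p\nmid q-1$ is what keeps the relevant Sylow subgroups small.

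Your phrase ``coprimality guarantees that the relevant normalizers act compatibly'' does none of this, and a test case shows the outline cannot be completed as written. Take $n=6$. The group $\mathrm{Hol}(S_3)=(S_3)_L(S_3)_R\cong S_3\times S_3$ is the automorphism group of the $4$-ary relation $\{(x,y,z,w): x^{-1}y=z^{-1}w\}$. It contains the regular cyclic subgroups $\langle (r,1),(1,s)\rangle$ and $\langle (1,r),(s,1)\rangle$, with $o(r)=3$ and $o(s)=2$. These are not conjugate, because conjugation preserves both direct factors. Yet your outline runs without obstruction up to the gluing step, and nothing in it detects that the conclusion is false:
\begin{itemize}
\item the two Sylow $3$-subgroups lie in a common Sylow subgroup;
\item they even have the same orbits;
\item the quotient has degree $2$.
\end{itemize}
What fails is that $P=\langle(r,1)\rangle$ and $P'=\langle(1,r)\rangle$ are distinct and non-conjugate, and that is precisely the step you never address.

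\textbf{Necessity.} As literally stated (``the cyclic groups''), the characterization is wrong for $\Z_2\times\Z_2$. A regular subgroup of $S_4$ isomorphic to $\Z_2^2$ consists of the identity and three fixed-point-free involutions. $S_4$ has exactly three such involutions, so the Klein group is the unique such subgroup, and $\Z_2^2$ is CI for every class. Pálfy's theorem, as the paper itself phrases it in Section~3, concerns groups of order $n$ with $n=4$ or $\gcd(n,\varphi(n))=1$. Your claim that the reduction to $G=\Z_p^2$ ``yields a violation'' is therefore false for $p=2$, since $\mathrm{Hol}(\Z_2^2)=S_4$.

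There are further gaps in this direction:
\begin{itemize}
\item Passing from $G$ to a subgroup of order $p^2$, $8$ or $pq$ needs the fact that the CI-property for $k$-ary structures is inherited by subgroups. You neither state nor prove this.
\item The prime $2$ is not covered. In $\Z_p\wr\Z_{n/p}$, the conjugacy invariant of a regular cyclic subgroup is the sum (in $\Z_p^*$) of the base coordinates of its generator lying over a fixed top generator. This takes only one value when $p=2$. So, for example, $\Z_8$ needs the holomorph instead: there $x\mapsto 5x+1$ generates a regular cyclic subgroup other than the normal translation subgroup.
\item Your closure test is off by one. A trivial pointwise stabilizer of $3$ points gives $4$-closure, not $3$-closure; the sharply $3$-transitive $\PGL_2(q)$, $q\ge 4$, has $3$-closure $S_{q+1}$. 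For holomorph-type examples $4$-closure is automatic anyway, since $\mathrm{Hol}(G)$ is the automorphism group of the $4$-ary relation above.
\end{itemize}
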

	
	The condition $\gcd(n,\varphi(n))=1$ is exactly the condition ensuring that the cyclic group is the unique group of order $n$.
	
	In this paper we focus on the Cayley isomorphism problem for $3$-ary (that is, ternary) relational structures. The cyclic groups appearing in P\'alfy’s theorem are all CI-groups with respect to ternary relational structures. 
	Since our attention will be on the nonabelian candidates, we record the relevant consequences of the main result of~\cite{Dobson2003} as improved in \cite{DobsonS2013} explicitly.
	\begin{cor}\label{previous-nonabelian}
		If $G$ is a nonabelian CI-group with respect to ternary relational structures, then:
		\begin{itemize}
			\item all nontrivial odd-order Sylow subgroups of $G$ have prime order;
			\item the Sylow $2$-subgroups of $G$ are isomorphic to $\Z_2$, $\Z_4$, or $Q_8$;
			\item $G = U \times V$, where:
			\begin{itemize}
				\item $\gcd(|U|,|V|)=1$;
				\item $U$ is cyclic of odd order $n$ with $\gcd(n,\varphi(n))=1$; and
				\item for some odd $m$ with $\gcd(nm,\varphi(nm))=1$, the group $V$ is either:
				\begin{itemize}
					\item $Q_8$ (with $m=1$),
					\item dihedral of order $2m$, or
					\item dicyclic of order $4m$.
				\end{itemize}
			\end{itemize}
		\end{itemize}
		
		Moreover, if $V=Q_8$ or $V$ is dicyclic and $p\mid n$ is prime, then $p\not\equiv 1\pmod 4$.
	\end{cor}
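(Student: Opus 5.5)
This corollary is essentially a restatement of the classification of candidate CI-groups for ternary relational structures obtained in \cite{Dobson2003} and sharpened in \cite{DobsonS2013}, so the plan is to read off these conditions from those results and specialise to the nonabelian case. The main theorem of \cite{Dobson2003}, as improved in \cite{DobsonS2013}, says the following about a CI-group $G$ with respect to ternary relational structures. First, every Sylow subgroup of $G$ is of prime order, or is $\Z_4$ or $Q_8$. Second, $G$ decomposes as a direct product of a cyclic group of order $n$ with $\gcd(n,\varphi(n))=1$ and a group from a short list: $Q_8$, $\Z_4$, or $\Z_2$, or a dihedral or dicyclic group whose odd part has order $m$ with $\gcd(nm,\varphi(nm))=1$. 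Third, the orders of the factors are coprime. I will take that statement as the starting point and extract the listed properties one at a time.

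\textbf{Sylow subgroups.} Since every Sylow subgroup of $G$ is of prime order, $\Z_4$, or $Q_8$, each odd Sylow subgroup has prime order. The Sylow $2$-subgroups are then among $\Z_2,\Z_4,Q_8$.

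\textbf{Decomposition.} Write $G=U\times V$ with $U$ the cyclic part of odd order $n$. Coprimality of $|U|$ and $|V|$ then follows directly from the structure theorem. Nonabelianness of $G$ forces $V$ to be nonabelian. This rules out $V$ being cyclic, for example $\Z_4\times\Z_m$ with the $\Z_m$ absorbed into $U$. It also rules out the degenerate dihedral and dicyclic cases with $m=1$, except for $Q_8$. What remains is exactly the three options listed. Any odd cyclic factor of $V$ that commutes with everything can be moved into $U$ so that the normal form is as stated, and the condition $\gcd(nm,\varphi(nm))=1$ is inherited.

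\textbf{The ``moreover'' clause.} This is the one genuinely non-bookkeeping point, and I expect it to be the main obstacle to pin down. The plan is to argue that if $V$ contains $Q_8$ or is dicyclic and $p\mid n$ with $p\equiv1\pmod 4$, then $G$ contains a subgroup $\Z_p\times\Z_4$ or $\Z_p\times Q_8$. The CI property for ternary structures passes to subgroups, so it suffices to show such a subgroup is not CI. For $p\equiv 1\pmod 4$ there is an element of order $4$ in $\Z_p^*$. One can then build a non-CI example from the semidirect products $\Z_p\rtimes\Z_4$, which have a second regular subgroup in the holomorph not conjugate to the first. This is presumably precisely the exclusion proved in \cite{DobsonS2013}, and I would cite it there rather than reconstruct the construction.
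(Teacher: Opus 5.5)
This is the paper's approach. The corollary is recorded without proof, as a consequence of the main result of \cite{Dobson2003} as sharpened in \cite{DobsonS2013}, and your handling of the ``moreover'' clause matches how the paper itself later invokes \cite[Theorem~4.2]{DobsonS2013} for $\Z_p\times\Z_4$ with $p\equiv 1\pmod 4$: you pass to the subgroup $\Z_p\times\Z_4\le U\times V$ and use that the CI property is inherited by subgroups.

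Two corrections to how you state the inputs. First, do not quote the source in the unrestricted form you give. That form is false: for example, $\Z_2^2$ has only one regular copy in $S_4$, the normal Klein group, so it is a CI-group for every class of objects, yet its Sylow $2$-subgroup is none of $\Z_2,\Z_4,Q_8$. The Sylow bullets must therefore be read off from the nonabelian part of the cited classification. Second, a self-contained holomorph construction for $\Z_{4p}$ would also need the ambient permutation group to be $3$-closed, and the method of Section~\ref{sec:holomorph} supplies this only for $p>5$.
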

	
	Prior to this paper, the only known nonabelian CI-groups with respect to ternary relational structures were the dihedral groups of order $2p$ with $p$ prime, as shown by Babai in \cite{Babai1977}.
	
	Our first main result (Corollary~\ref{dihedral cor}) shows that whenever $V$ is dihedral and the conditions of Corollary~\ref{previous-nonabelian} are satisfied, the group $G$ is a CI-group with respect to ternary relational structures. Since this also implies that such groups are CI-groups with respect to binary relational structures, this strengthens the known results for this family with respect to coloured digraphs. Previously, only the dihedral groups in this family, together with direct products of order less than $32$, were known to have the CI property.
	
	Our second main result (Corollary~\ref{dicyclic cor}) shows that if $3\nmid m$, then whenever $V$ is dicyclic and the conditions of Corollary~\ref{previous-nonabelian} are satisfied, the group $G$ is a CI-group if and only if all prime divisors of $m$ are congruent to $3\pmod 4$ (note that Corollary~\ref{previous-nonabelian} already imposes this condition on the prime divisors of $n$).
	
	We expect that our main tool (Theorem~\ref{041021a}) will also aid in resolving the remaining cases, but we leave this for future work.
	
	For nonabelian groups, this leaves unresolved the case where $V$ is dicyclic and $3\mid |V|$, and the case where $V=Q_8$.
	
	Finally, some of our results extend previously known results for CI-groups with respect to graphs and digraphs. In particular, several of the nonabelian groups we identify as CI-groups for ternary relational structures—specifically those with central cyclic subgroups of odd order—were not previously known to be CI-groups for digraphs. This implication is now immediate.
	\section{Preliminaries}\label{sec:preliminaries}
	Given a set $X$, we denote by $S_X$ and $A_X$ the \textbf{symmetric} and \textbf{alternating} groups on $X$, respectively. If $X = \{1, \ldots, n\}$, we write $S_n$ and $A_n$ for the corresponding groups.
	
	If a group $G$ acts on a set $X$ and $Y \subseteq X$, then $G_Y$ and $G_{\{Y\}}$ denote the pointwise and setwise stabilizers of $Y$ in $G$, respectively. If $g(Y) = Y$, then $g^Y \in S_Y$ denotes the permutation of $Y$ \textbf{induced} by $g \in G$; that is, $g^Y \colon Y \to Y$ with $g^Y(y) = g(y)$ for all $y \in Y$. We set $G_{\{Y\}}^Y = \{g^Y : g \in G_{\{Y\}}\}$.
	
	\begin{defin}
		Let $X$ be a set, let $G \leq S_X$ be transitive, and let $\emptyset \ne B \subseteq X$. We call $B$ a \textbf{block} of $G$ if, whenever $g \in G$, either $g(B) \cap B = \emptyset$ or $g(B) = B$. If $B = \{x\}$ for some $x \in X$, or $B = X$, then $B$ is a \textbf{trivial block}. Note that if $B$ is a block of $G$, then so is $g(B)$ for every $g \in G$. The set ${\mathcal B} = \{g(B) : g \in G\}$ is a partition of $X$, called a \textbf{block system of $G$}. If $G$ has a nontrivial block, then $G$ is said to be \textbf{imprimitive}; otherwise, $G$ is \textbf{primitive}.
	\end{defin}
	
	\begin{defin}
		The set of orbits of a normal subgroup of a transitive permutation group $G$ is a block system of $G$, and such a block system is called a \textbf{normal block system}.
	\end{defin}
	
	\begin{defin}Let $G \le S_X$ be a transitive permutation group with a block system ${\mathcal B}$.  
		Each $g \in G$ induces a permutation $g^{\mathcal B}$ on ${\mathcal B}$, defined by
		$g^{\mathcal B}(B) = B'$ if and only if $g(B) = B'$.
		Set
		$G^{\mathcal B} = \{ g^{\mathcal B} : g \in G \}$.
		This yields a homomorphism $G \to S_{\mathcal B}$ given by $g \mapsto g^{\mathcal B}$, whose kernel we denote by $\fix_G({\mathcal B})$.  
		Thus $
		\fix_G({\mathcal B}) = \{ g \in G : g(B) = B \text{ for all } B \in {\mathcal B} \}$,
		and $\fix_G({\mathcal B})$ is the setwise stabilizer of each block of ${\mathcal B}$.
		
	\end{defin}
	
	A block system $\mathcal B$ is normal for a group $G$ if and only if  $\fix_G(\mathcal B)$ acts transitively on each block $B \in \mathcal B$.
	
	\begin{defin}
		Let $G \le S_X$ be transitive and have block systems ${\mathcal B}$ and ${\mathcal C}$. If, whenever $B \in {\mathcal B}$, there exists $C \in {\mathcal C}$ such that $B \subseteq C$, we write ${\mathcal B} \preceq {\mathcal C}$. If $B \subset C$, we write ${\mathcal B} \prec {\mathcal C}$. We say that ${\mathcal B}$ is \textbf{minimal} if it is minimal, with respect to the partial order $\preceq$, among all block systems distinct from $\{\{x\} : x \in X\}$.
	\end{defin}

	\begin{defin}\label{mstep}
		Let $n$ be an integer and $n = p_1^{a_1}p_2^{a_2}\cdots p_r^{a_r}$ be the prime-power decomposition of $n$.  Define $\Omega\colon{\mathbb N}\to {\mathbb N}$ by $\Omega(n) = \sum_{i=1}^ra_i$.   A transitive group $G\le S_n$ is {\bf $m$-step imprimitive} if $G$ has a sequence of block systems ${\mathcal  B}_0 \prec{\mathcal  B}_1\prec \cdots \prec{\mathcal  B}_m$.  If, in addition, each ${\mathcal  B}_i$ is normal, we say that $G$ is {\bf normally $m$-step imprimitive}.
		
		When $m=\Omega(n)$, ${\mathcal  B}_0$ consists of singleton sets, ${\mathcal  B}_m = \{1,\ldots,n\}$, and if $B_{i+1}\in{\mathcal  B}_{i+1}$ and $B_i\in{\mathcal  B}_i$, then $\vert B_{i+1}\vert/\vert B_i\vert$ is prime, $0\le i\le m - 1$.
	\end{defin}
	
	We now define the class of groups with which we will be concerned in this paper.
	
	\begin{defin}\label{190921a}
		Let $n$ be an odd square-free integer. Denote by ${\mathcal R}_n$ the set consisting of the following groups:
		\begin{equation}
			\begin{array}{rl}
				(a) & R = \Z_n \times R_2 \text{ where } R_2 \in \{1, \Z_2, \Z_2^2, \Z_2^3, \Z_2^4, \Z_4, Q_8\};\\[3pt]
				(b) & R = \Z_n \rtimes \langle y \rangle \text{ with } o(y) \in \{2,4,8\},\, y \notin \mathsf{Z}(R) \text{ and } y^2 \in \mathsf{Z}(R),
			\end{array}
		\end{equation}
		where $\mathsf{Z}(R)$ denotes the center of $R$.
		Denote by ${\mathcal R}$ the union $\bigcup_n {\mathcal R}_n$, where $n$ runs over all odd square-free integers.
	\end{defin}
	
	In part~$(b)$, the action of $y$ by conjugation on $\Z_n$ is an arbitrary automorphism of order two. Note that $\mathcal R$ is closed under taking subgroups and quotient groups.
	
	The family $\mathcal R$ includes all the putative nonabelian CI-groups with respect to ternary relational structures (according to Corollary~\ref{previous-nonabelian}). It also includes additional groups, since, for instance, the condition $\gcd(n,\varphi(n))=1$ is omitted. Our reason for including these extra groups is that we believe our results are likely to be useful in determining the CI-status with respect to graphs and digraphs for several families of groups whose status is still undetermined, but which are not CI-groups with respect to ternary relational structures.
	
	\begin{prop}\label{190921b} 
		Let $X$ be a set and let $R \in {\mathcal R}$ act regularly on $X$.  
		Let ${\mathcal B}$ be a block system of $R$. Then, for any $B \in {\mathcal B}$,  $R_{\{B\}}^B$ belongs to ${\mathcal R}$. 
		Moreover, the induced group $R^{\mathcal B}$ on ${\mathcal B}$ contains a regular subgroup belonging to ${\mathcal R}$.
	\end{prop}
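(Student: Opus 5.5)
Since $R$ is regular on $X$, I identify $X$ with $R$ acting by left multiplication. The first step is the standard description of the blocks: the blocks of $R$ containing the identity are exactly the subgroups $H \le R$. A block $B$ is then a left coset $gH$, and $R_{\{B\}} = gHg^{-1}$. The induced group $R_{\{B\}}^B$ is permutation isomorphic to $H$ acting regularly on itself, so $R_{\{B\}}^B \cong H$. Because $\mathcal R$ is closed under taking subgroups, as remarked after Definition~\ref{190921a}, the first assertion follows.

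For the second assertion, write $\mathcal B = \{gH : g \in R\}$. The kernel of the action of $R$ on $\mathcal B$ is the core $K = \bigcap_g gHg^{-1}$, so $R^{\mathcal B} \cong R/K$, which is transitive of degree $[R:H]$. I need a subgroup of $R/K$ that lies in $\mathcal R$ and acts regularly on $\mathcal B$. If $H$ is normal, then $K = H$, and $R^{\mathcal B} \cong R/H$ is itself regular and lies in $\mathcal R$ by closure under quotients. In general the plan is to find a subgroup $L \le R$ with $L \cap H = 1$ and $LH = R$, that is, a complement to $H$ as a set. Such an $L$ acts regularly on the cosets $gH$, and since $L \cap K \le L \cap H = 1$, it embeds faithfully in $R^{\mathcal B}$. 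Then $L^{\mathcal B} \cong L$ lies in $\mathcal R$ by subgroup closure.

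The main obstacle is producing this complement, so I will use the structure of $R$. Write $R = \Z_n \rtimes P$ with $P$ a Sylow $2$-subgroup. In case (a), $P = R_2$; in case (b), $P = \langle y\rangle$. Since $n$ is odd and square-free, $\Z_n$ is normal and is the unique Hall $2'$-subgroup. Hence $H = (H\cap \Z_n)\rtimes Q$ for some $2$-subgroup $Q$ of $H$, and after conjugating $H$ I may assume $Q \le P$. I would try $L = C\rtimes P_0$, where $C$ is a complement to $H\cap\Z_n$ in the cyclic group $\Z_n$ (it exists because $n$ is square-free), and $P_0$ is a complement to $Q$ inside the $2$-group $P$.

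The main obstacle is that the $2$-group $P$ need not contain a subgroup complement to $Q$. For example, if $P = \Z_4$ and $Q = \Z_2$, no complement exists, so a regular subgroup of $R^{\mathcal B}$ cannot always be taken inside the image of $R$ in this way. To handle this I would separate the two primes. Let $\mathcal B'$ be the block system given by the cosets of $H\cap\Z_n$ in $\Z_n P$. The odd part $C$ acts semiregularly, and $R^{\mathcal B}$ should contain a regular subgroup of the form $\Z_{n'}\times \bar P$ or $\Z_{n'}\rtimes \bar P$, where $\bar P$ is a regular subgroup of the action of $P$ on $P/Q$. When $Q$ is normal in $P$, the quotient $P/Q$ is again one of $1,\Z_2,\Z_2^k,\Z_4,Q_8$, and since $y^2$ is central the action on the odd part stays within type (a) or (b). When $Q$ is not normal in $P$, this can only happen for $P = \Z_2^k$, where complements always exist, or for $Q_8$, whose subgroups are all normal; so this case does not arise. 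The remaining work is to check that the conjugation action of $\bar P$ on $\Z_{n'}$ meets the requirements of Definition~\ref{190921a}, namely that it is through an automorphism of order at most two with the square of the generator central. This follows because $y^2$ acts trivially on $\Z_n$, hence on every quotient of it.
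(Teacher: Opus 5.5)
Your first assertion is correct. So are the normal case via $R/H$ and the observation that a complement $L$ to $H$ acts regularly and faithfully on $\mathcal B$. The gap is in your last paragraph, which is supposed to cover the cases where no complement exists. It only asserts that $R^{\mathcal B}$ ``should contain'' a regular subgroup $\Z_{n'}\rtimes\bar P$. It never says which subgroup of $R^{\mathcal B}$ this is, or why it is a subgroup, or why it is regular. The discussion of whether $Q$ is normal in $P$ is beside the point: every subgroup of an admissible $P$ is normal in $P$, since $P$ is abelian or $Q_8$. What matters is whether $H$ is normal in $R$.

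The missing observation is that a complement fails to exist only when $H\trianglelefteq R$, a case you have already settled.
\begin{itemize}
\item In type (a), $R=\Z_n\times R_2$ with coprime orders, so every subgroup has the form $A\times B$ with $A\le\Z_n$ and $B\le R_2$. Since both factors are Dedekind, every subgroup of $R$ is normal.
\item In type (b), $P=\langle y\rangle$ is cyclic, so $Q$ lacks a complement in $P$ only when $1<Q<P$. Then $Q\le\langle y^2\rangle\le\mathsf{Z}(R)$. Also $H\cap\Z_n$ is characteristic in the normal cyclic subgroup $\Z_n$. Hence $H=(H\cap\Z_n)\times Q$ is normal in $R$.
\end{itemize}
Consequently a non-normal $H$ forces $Q=P$, i.e.\ $H$ is conjugate to $K\rtimes\langle y\rangle$ with $K=H\cap\Z_n$. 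In that case your complement $L=C\cong\Z_{n/|K|}$ works.

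With this remark your argument becomes essentially the paper's proof. The paper splits on whether $R_{\{B\}}$ is normal and uses $R/R_{\{B\}}$ in the normal case. In the non-normal case it notes that $R_{\{B\}}$ is conjugate to $K\rtimes\langle y\rangle$. It then takes the image of $\Z_n\times\langle y^2\rangle$, a cyclic group of order $n/|K|$ acting regularly on $\mathcal B$.
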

	
	\begin{proof}
		Since $R$ acts regularly on $X$,  $R_{\{B\}}$ has order $|B|$, and $R_{\{B\}}^B \cong R_{\{B\}}$ as abstract groups. 
		As ${\mathcal R}$ is closed under taking subgroups, we have $R_{\{B\}}^B \in {\mathcal R}$.
		
		If $R_{\{B\}}$ is normal in $R$, then the induced action $R^{\mathcal B}$ is a regular action of the quotient group $R / R_{\{B\}}$. 
		The conclusion follows, since ${\mathcal R}$ is also closed under taking quotient groups.
		
		If $R_{\{B\}}$ is not normal in $R$, then $R \cong \Z_n \rtimes \langle y \rangle$ as in Definition~\ref{190921a}~$(b)$. 
		As every subgroup of $\Z_n \times \langle y^2 \rangle$ is normal in $R$, the subgroup $R_{\{B\}}$ is conjugate to $K \rtimes \langle y \rangle$, 
		where $K$ is the unique subgroup of $\Z_n$ of order $|B| / o(y)$. 
		In this case, $\fix_R({\mathcal B}) = K \times \langle y^2 \rangle$, and hence 
		$
		(\Z_n \times \langle y^2 \rangle )/ \fix_R({\mathcal B})
		$
		acts regularly on ${\mathcal B}$ and is isomorphic to $\Z_{n/|K|} \in {\mathcal R}$.
	\end{proof}
	
	The following lemma from \cite{Babai1977} has been a critical tool in many CI results, and informs our approach to this problem.
	
	\begin{lem}[Lemma 3.2, \cite{Babai1977}]\label{Babai lemma}
		For a Cayley object $X$ of a group $G$ in a class $\mathcal C$ of
		combinatorial objects,the following are equivalent:
		\begin{enumerate}
			\item $X$ is a CI-object of $G$ in $\mathcal C$;
			\item any two regular subgroups of $\Aut(X)$ that are isomorphic to $G$, are conjugate in $\Aut(X)$.
		\end{enumerate} 
	\end{lem}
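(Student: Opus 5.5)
We prove the two implications separately, working throughout with the correspondence between Cayley objects of $G$ on the set $G$ and the regular representation $G_L = \{x\mapsto gx : g\in G\}$ of $G$. The one structural fact needed is that, for any Cayley object $X$ of $G$ and any $\phi\in S_G$, the image $\phi(X)$ is again a Cayley object of $G$ if and only if $\phi^{-1}G_L\phi\le\Aut(X)$. Indeed, $\Aut(\phi(X))=\phi\Aut(X)\phi^{-1}$, so $G_L\le\Aut(\phi(X))$ exactly when $\phi^{-1}G_L\phi\le\Aut(X)$. We also use that an automorphism $\alpha$ of $G$, regarded as a permutation of $G$, normalizes $G_L$, since $\alpha g_L\alpha^{-1}=\alpha(g)_L$.

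For (1)$\Rightarrow$(2), let $H\le\Aut(X)$ be regular and isomorphic to $G$. A regular permutation group is permutation isomorphic to the left regular representation of any abstractly isomorphic group, so there is $\phi\in S_G$ with $\phi^{-1}G_L\phi=H$. By the structural fact, $\phi(X)$ is a Cayley object of $G$ isomorphic to $X$. Since $X$ is a CI-object, there is $\alpha\in\Aut(G)$ with $\alpha(X)=\phi(X)$. Put $\delta=\phi^{-1}\alpha$; then $\delta(X)=X$, so $\delta\in\Aut(X)$. Moreover $\delta^{-1}H\delta=\alpha^{-1}\phi H\phi^{-1}\alpha=\alpha^{-1}G_L\alpha=G_L$. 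Hence every such $H$ is conjugate to $G_L$ in $\Aut(X)$, and so any two of them are conjugate.

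For (2)$\Rightarrow$(1), let $Y$ be a Cayley object of $G$ with $Y=\phi(X)$ for some $\phi\in S_G$. By the structural fact, $\phi^{-1}G_L\phi\le\Aut(X)$, and this is a regular subgroup isomorphic to $G$. By (2) there is $\delta\in\Aut(X)$ with $\delta^{-1}\phi^{-1}G_L\phi\delta=G_L$. Then $\psi=\phi\delta$ normalizes $G_L$ and satisfies $\psi(X)=\phi(X)=Y$. Composing with a left translation, we may assume $\psi(1)=1$: replace $\psi$ by $t_L\psi$, where $t=\psi(1)^{-1}$. This is harmless because $t_L$ also normalizes $G_L$, and $t_L(Y)=Y$ since $Y$ is a Cayley object.

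It remains to show that $\psi$ is a group automorphism; this is the main point of the proof. Because $\psi$ normalizes $G_L$, for each $g$ there is $g'$ with $\psi g_L\psi^{-1}=g'_L$, and $g\mapsto g'$ is an automorphism of $G$. Evaluating at $1$ gives $\psi(g)=\psi(g_L(1))=g'_L(\psi(1))=g'$. Thus $\psi$ is this automorphism, so $\psi\in\Aut(G)$ maps $X$ to $Y$, and $X$ is a CI-object.
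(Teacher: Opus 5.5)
The paper does not prove this lemma; it cites it from Babai. Your argument is correct and is essentially Babai's original proof: it rests on $\Aut(\phi(X))=\phi\Aut(X)\phi^{-1}$ and on the fact that the normalizer of $G_L$ in $S_G$ is $G_L\rtimes\Aut(G)$, which you verify directly by translating so that $\psi(1)=1$ and evaluating $\psi g_L\psi^{-1}$ at the identity. The only unstated assumption, standard in this framework, is that $\mathcal C$ is closed under relabelling by permutations of $G$, so that $\phi(X)$ is again an object of $\mathcal C$.
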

	
	We conclude this section with some auxiliary results. 
	
	\begin{thrm}[Corollary 28, \cite{Dobson2003}]\label{dicyclic-conditional}
		Let $n,m$ be integers with $\gcd(nm,\varphi(nm))=1$. Then the following are equivalent for every binary relational structure $X$ on $\Z_n \times {\rm Dic}(m)$. Providing that for every prime divisor $p$ of $nm$, $p \equiv 3 \pmod{4}$, they are equivalent for every ternary relational structure $X$ also: 
		\begin{enumerate}
			\item $X$ has the CI property; and
			\item given any two regular subgroups $R_1, R_2 \cong G$ in $\Aut(X)$, there exists $\psi \in \Aut(X)$ such that $\sg{R_1, \psi^{-1}R_2\psi}$ admits a  block system consisting of $4$ blocks of cardinality $nm$.
		\end{enumerate} 
	\end{thrm}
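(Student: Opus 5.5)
The implication $(1)\Rightarrow(2)$ is immediate from Lemma~\ref{Babai lemma}. If $X$ is a CI-object, then any two regular subgroups $R_1,R_2\cong G=\Z_n\times{\rm Dic}(m)$ of $\Aut(X)$ are conjugate: $R_1=\psi^{-1}R_2\psi$ for some $\psi\in\Aut(X)$. Hence $\langle R_1,\psi^{-1}R_2\psi\rangle=R_1$. Since $nm$ is odd, $G$ has a normal cyclic Hall subgroup $C\cong\Z_{nm}$ of index $4$, namely $\Z_n\times\Z_m$. The orbits of $C$ form a normal block system of $R_1$ with $4$ blocks of size $nm$.

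For $(2)\Rightarrow(1)$ I will again use Lemma~\ref{Babai lemma}, and show that $R_1$ and $R_2$ are conjugate in $\Aut(X)$. Replace $R_2$ by $\psi^{-1}R_2\psi$, set $H=\langle R_1,R_2\rangle\le\Aut(X)$, and let ${\mathcal B}$ be the block system of $H$ with $4$ blocks of size $nm$.
\begin{enumerate}
\item \emph{Locate the cyclic parts.} For each $i$, the stabilizer in $R_i$ of a block of ${\mathcal B}$ has order $nm$. Since $R_i$ has a unique subgroup of that order, the stabilizer is $C_i\cong\Z_{nm}$, which is normal in $R_i$. Each $C_i$ is semiregular with orbits exactly the blocks of ${\mathcal B}$, and $R_i^{\mathcal B}\cong\Z_4$.
\item \emph{Conjugate $C_1$ to $C_2$.} Work in $K=\fix_{\Aut(X)}({\mathcal B})$. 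Restricted to a block $B$, both $C_1$ and $C_2$ are regular cyclic groups of order $nm$ in $K^B$, which is the automorphism group of the induced (binary or ternary) structure on $B$. Since $\gcd(nm,\varphi(nm))=1$, P\'alfy's theorem makes $\Z_{nm}$ a CI-group for all relational structures. So on each block the two cyclic groups are conjugate inside $K^B$. Equivalently, the Sylow $p$-subgroups of $C_1$ and $C_2$ are conjugate in $K$ by a Sylow argument, one prime $p\mid nm$ at a time, using that the Sylow $p$-subgroups of $K$ act on each block with a unique semiregular $\Z_p$ up to conjugacy. Gluing these conjugations across the four blocks (using elements of $R_1$ that permute the blocks) gives $\delta\in\Aut(X)$ with $\delta^{-1}C_2\delta=C_1=:C$. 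Replace $R_2$ by $\delta^{-1}R_2\delta$.
\item \emph{Handle the $2$-part.} Now $R_i=C\rtimes P_i$ with $P_i\cong\Z_4$, and both $R_i$ lie in $N=N_{\Aut(X)}(C)$. Let $P_i$ be a Sylow $2$-subgroup of $R_i$. By Sylow's theorem in $N$, I conjugate by an element of $N$ so that $P_1$ and $P_2$ lie in a common Sylow $2$-subgroup $S$ of $N$. The generator $y_i$ of $P_i$ acts on $C$ by inverting $\Z_m$ and centralizing $\Z_n$; its square is central and acts semiregularly. The remaining task is to show that $C\rtimes P_1$ and $C\rtimes P_2$ are conjugate by an element of $N\cap\Aut(X)$ fixing $C$. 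I would compare $y_2$ with $y_1c$ for $c\in C_N(C)$ and solve for a conjugating element in the centralizer of $C$.
\end{enumerate}

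The main obstacle is step (3) in the ternary case. A ternary structure can distinguish more than a binary one, so the centralizer of $C$ in $\Aut(X)$ may be too small to absorb the discrepancy between $P_1$ and $P_2$. This is exactly where the hypothesis $p\equiv3\pmod 4$ for all $p\mid nm$ enters. It guarantees that $-1$ is a non-square modulo each $p$, so that $\Aut(\Z_p)$ has Sylow $2$-subgroup of order $2$, generated by inversion. Consequently any element of order $4$ in $N$ normalizing $C$ acts on each $\Z_p$ either trivially or by inversion, and never by an element of order $4$. This forces $P_2$ to act on $C$ exactly as $P_1$ does, after which a Frattini-type argument in $C_N(C)\rtimes S$ produces the conjugating element. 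In the binary case I expect this step to go through without the congruence condition, because the $2$-closure of $\Aut(X)$ retains enough semiregular elements in $C_N(C)$.
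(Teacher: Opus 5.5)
The paper does not prove this statement; it quotes it from \cite{Dobson2003}. So I judge your argument on its own. Direction (1)$\Rightarrow$(2) is fine. The gap is Step~2 of (2)$\Rightarrow$(1).

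A minor point first. $K^B$ need not be the automorphism group of the structure induced on $B$, because automorphisms of $X[B]$ need not extend to $X$. This is repairable: every permutation group of degree $nm$ is the automorphism group of an $nm$-ary structure, so P\'alfy's theorem still gives conjugacy of $C_1^B$ and $C_2^B$ inside $K^B$.

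The real problem is the gluing, and it fails in two ways.
\begin{itemize}
\item $K$ is only a subdirect product of the $K^B$. Conjugators chosen independently on the four blocks need not come from one element, and elements of $R_1$ cannot be used since they lie outside $K$.
\item Even $(\delta^{-1}C_2\delta)^B=C_1^B$ for every $B\in\mathcal B$ does not give $\delta^{-1}C_2\delta=C_1$. A semiregular cyclic group can induce the same group as $C_1$ on every block while being twisted across blocks. Your Sylow-by-Sylow variant has the same defect, and later conjugations can undo earlier ones.
\end{itemize}

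Here is a concrete example. Let $p\equiv 1\pmod 4$ with $p>5$, and apply Corollary~\ref{cor2} with $|\Ker(\alpha)|=2$. Then $G_1\cong G_2\cong{\rm Dic}(p)$ sit as regular subgroups of the $3$-closed group $F_L\cdot F_R$, so this is your setting with $n=1$, $m=p$.
\begin{itemize}
\item Both $G_1$ and $G_2$ preserve the four cosets of the Frobenius kernel $K$, so (2) holds with $\psi=1$.
\item $K_L$ and $K_R$ induce the same group on each coset: right multiplication by $k$ on $Kx$ equals left multiplication by $xkx^{-1}\in K$. Hence all your blockwise conjugators are trivial.
\item Yet $K_L\neq K_R$, and both are normal in $F_L\cdot F_R$, so no $\delta$ as in Step~2 exists.
\end{itemize}
Step~2 uses neither the arity nor $p \bmod 4$, so it cannot be valid as written.

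The example also shows that you have misplaced the role of the congruence. In Step~3 it does nothing. Since $y_i^2\in\mathsf{Z}(R_i)$ centralizes $C$, each $y_i$ induces on $C$ an automorphism of order at most $2$ for every $p$. In fact $y_1$ and $y_2$ automatically induce the same automorphism: inversion on the unique subgroup of order $m$ and the identity on the one of order $n$.

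What $p\equiv 3\pmod 4$ has to control is the twist between $C_1$ and $C_2$ across the four blocks. In the example, on the coset of $x=(0,\omega^i)$ the twist is conjugation by $x$. That is scaling by $\omega^{\pm i}$ with $\omega$ of order $4$ in $\Z_p^*$, which exists only when $4\mid p-1$. A correct argument must remove such twists using elements of $\Aut(X)$. That is where the closure properties of $\Aut(X)$ have to enter: binary closure in one case, and ternary closure together with $p\equiv 3\pmod 4$ in the other.

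Your proposal supplies neither this argument nor the ``Frattini-type argument'' of Step~3. Your remark about the binary case is only an expectation, not a proof.
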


	\begin{thrm}[Corollary 15, \cite{Dobson2003}]\label{dihedral-conditional}
		Let $n,m$ be integers with $\gcd(nm,\varphi(nm))=1$. Let $X$ be a ternary relational structure on $G=\Z_n\times D_{2m}$. Then the following are equivalent:
		\begin{enumerate}
			\item $X$ has the CI property; and
			\item given any two regular subgroups $R_1, R_2 \cong G$ in $\Aut(X)$, there exists $\psi \in \Aut(X)$ such that $\sg{R_1, \psi^{-1}R_2\psi}$ admits a block system consisting of $2$ blocks of cardinality $nm$.
		\end{enumerate} 
	\end{thrm}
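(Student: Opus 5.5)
The implication (1)$\Rightarrow$(2) is immediate from Lemma~\ref{Babai lemma}. If $X$ has the CI property, then any two regular subgroups $R_1,R_2\cong G$ of $\Aut(X)$ are conjugate in $\Aut(X)$. So there is $\psi\in\Aut(X)$ with $\psi^{-1}R_2\psi=R_1$, and then $\sg{R_1,\psi^{-1}R_2\psi}=R_1$. Since $\gcd(nm,\varphi(nm))=1$, the integer $nm$ is odd and square-free. Hence $G=\Z_n\times D_{2m}$ has a normal cyclic subgroup $C\cong \Z_{nm}$ of index $2$, and the orbits of $C$ in the regular group $R_1$ form a block system with $2$ blocks of cardinality $nm$.

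For (2)$\Rightarrow$(1) I will verify condition (2) of Lemma~\ref{Babai lemma}. Fix regular $R_1,R_2\cong G$, and replace $R_2$ by $\psi^{-1}R_2\psi$, so that $H=\sg{R_1,R_2}$ has a block system $\mathcal B=\{B,B'\}$ with $|B|=|B'|=nm$. Since $G/G'\cong \Z_n\times\Z_2$, the group $G$ has a unique subgroup of index $2$, namely the cyclic group $\Z_n\times\Z_m$. Therefore $C_i:=(R_i)_{\{B\}}$ is cyclic of order $nm$ and acts semiregularly with orbits $B,B'$, for $i=1,2$. The proof then splits into two steps.
\begin{enumerate}
\item Conjugate $C_2$ to $C_1$ inside $\Aut(X)$.
\item Once $C_1=C_2=C$, conjugate the complement.
\end{enumerate}

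For step (1) I work in $K=\fix_{\Aut(X)}(\mathcal B)$. On each block, $C_1^B$ and $C_2^B$ are regular cyclic groups of order $nm$ with $\gcd(nm,\varphi(nm))=1$. By P\'alfy's theorem they are conjugate by an automorphism of the ternary structure induced on $B$, and similarly on $B'$. The difficulty is that these blockwise conjugations must glue to an element of $\Aut(X)$ that also preserves the ternary relations meeting both blocks. I would handle this Sylow prime by Sylow prime. The Sylow $p$-subgroups of $C_1$ and $C_2$ have prime order $p$, since $nm$ is square-free. A Sylow argument inside $K$, or inside a suitable $p$-closed subgroup of $K$, should conjugate the Sylow $p$-subgroup of $C_2$ into one normalized by that of $C_1$. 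Here the hypothesis $\gcd(nm,\varphi(nm))=1$ ensures that $p$ does not divide the order of any automorphism group of the other Sylow pieces, which is what makes the orders combine. For the induction over the primes I would use the normal block systems given by the orbits of the Sylow $p$-subgroups of $C_i$, so that $H$, or a relevant subgroup of it, is normally $\Omega(nm)$-step imprimitive in the sense of Definition~\ref{mstep}. The ternary hypothesis enters here: $\Aut(X)$ is $3$-closed, so triples with points in both blocks are controlled, and the blockwise conjugating permutations can be chosen compatibly. I expect this gluing step to be the main obstacle.

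For step (2), write $R_i=C\rtimes\sg{\tau_i}$. Both $\tau_i$ lie in $N_{\Aut(X)}(C)$, swap $B$ and $B'$, centralize the $\Z_n$-part of $C$, and invert the $\Z_m$-part. Hence $\tau_1\tau_2$ fixes both blocks and centralizes $C$, so on each block it acts as an element of the centralizer of a regular cyclic group, which is $C^B$ itself. One then finds $c\in C$ with $c^{-1}\tau_2 c\in C\tau_1$, using that $|C|$ is odd so that squaring is bijective on $C$. This gives $c^{-1}R_2c=R_1$, and Lemma~\ref{Babai lemma} completes the proof.
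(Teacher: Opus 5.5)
The paper does not prove this statement; it is quoted from \cite{Dobson2003}, so I am judging your argument on its own. Your (1)$\Rightarrow$(2) is fine. In (2)$\Rightarrow$(1), however, there is a genuine gap in your step~(1), and that step carries all of the difficulty.

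\textbf{Step (1).} Applying P\'alfy's theorem to the structure induced on $B$ is the wrong tool, because automorphisms of that substructure need not extend to $\Aut(X)$. The usable form is group-theoretic: two regular cyclic subgroups of order $N$ with $\gcd(N,\varphi(N))=1$ in any permutation group are conjugate in that group.

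Apply it first in $K^B$, where $K=\fix_{\sg{R_1,R_2}}(\mathcal B)$. This gives a $K$-conjugate $C_2'$ of $C_2$ with $(C_2')^B=C_1^B$. Apply it again in $\sg{C_1,C_2'}^{B'}$, whose restriction to $B$ is just $C_1^B$. This gives a conjugate $D$ of $C_2$ with $D^B=C_1^B$ and $D^{B'}=C_1^{B'}$, and that is as far as blockwise arguments go. Such a $D$ is a twisted diagonal: there is $\alpha\in\Aut(C_1)$ such that $D$ consists of the permutations that agree with $c$ on $B$ and with $\alpha(c)$ on $B'$, for $c\in C_1$.

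No Sylow or gluing argument can remove $\alpha$. The obstruction lives in $\Aut(C_1)$, whose order divides $\varphi(nm)$ and is coprime to $nm$. Concretely, take $m>1$ and let $C$ be the cyclic subgroup of index $2$ in $G$, and consider the inner holomorph $G_LG_R$.
\begin{itemize}
\item $C_L$ and $C_R$ both have orbits $C$ and $G\setminus C$, and they coincide on $C$.
\item On $G\setminus C$ they differ by the inversion of the $\Z_m$-part.
\item Both are normal in $G_LG_R$.
\end{itemize}
Hence $G_L$ and $G_R$ are not conjugate in $G_LG_R$, even though all of $G_LG_R$ preserves $\{C,G\setminus C\}$. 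The theorem survives only because $G_LG_R$ is not $3$-closed.

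So a proof must use $3$-closedness to produce an element of $\Aut(X)$ that normalizes $C_1^B$ and $C_1^{B'}$ and induces on them automorphisms differing by $\alpha$. Let $\tau_2$ be an involution of the conjugate of $R_2$ containing $D$. Conjugating by powers of $\tau_1\tau_2$ changes $\alpha$ only by powers of $\alpha^2$, so the $2$-part of $\alpha$ is the real obstruction. Removing it requires an actual computation of the $3$-orbits of a suitably described overgroup. Your sentence about triples meeting both blocks asserts the conclusion without supplying this computation.

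\textbf{Step (2).} This step also needs repair, though it is easy to fix. Since $C\le R_2$, conjugation by any $c\in C$ fixes $R_2$. So $c^{-1}\tau_2c\in C\tau_1$ would force $R_1=R_2$.

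The element $w=\tau_1\tau_2$ agrees with an element of $C$ on $B$ and with an element of $C$ on $B'$. In general these are different elements, so $w\notin C$. What is true is:
\begin{itemize}
\item $w$ centralizes $C$;
\item $w^{nm}=1$, so $w$ has odd order;
\item $\tau_1w\tau_1=w^{-1}$.
\end{itemize}
Put $y=w^{(o(w)+1)/2}$. Then $y^{-1}\tau_1y=\tau_1y^2=\tau_2$, hence $y^{-1}R_1y=R_2$ with $y\in\sg{\tau_1,\tau_2}\le\Aut(X)$. This part needs no $3$-closure; the whole weight of the theorem lies in removing the twist $\alpha$.
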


	\section{A generalization of a result of P\'alfy}
	
	P\'alfy showed in \cite{Palfy1987} that a group $G$ of order $n$ is a CI-group with respect to every class of combinatorial objects if and only if $n = 4$ or $\gcd(n,\varphi(n)) = 1$.  The tool that P\'alfy used to reduce the primitive case to the imprimitive case was a result which showed that, with the exception of $A_n$ or $S_n$, for a $2$-transitive group of square-free degree $n$ there is a prime divisor $p$ of $n$ for which $G$ has a cyclic Sylow $p$-subgroup.  Our goal in this section is to prove an analogue of P\'alfy's result for the degrees under consideration in this paper.  Our result (Proposition~\ref{090921a}) is not quite as `clean' as P\'alfy's result, as there are a few small exceptions to the general result. 
	
	\begin{prop}\label{170921a} 
		Let $n$ be a positive integer with $n = 2^e m$, where $0 \le e \le 4$ and $m$ is odd and square-free, and let $G \leq S_n$ be a primitive group. If $G$ contains a semiregular cyclic subgroup with two orbits or a regular abelian subgroup, then  $G$ is $2$-transitive unless one of the following holds:
		\begin{itemize}
			\item $n = p$ is prime and $G < \AGL_1(p)$; or
			\item $n = 10$ and $A_5 \leq G \leq S_5$; or
			\item $n = 16$.
		\end{itemize} 
	\end{prop}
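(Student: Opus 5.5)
We will rely on known classifications rather than argue from first principles. The two hypotheses, a semiregular cyclic subgroup with two orbits or a regular abelian subgroup, correspond to two standard results about primitive groups. For primitive groups containing a cyclic subgroup with at most two orbits of equal length (bicyclic groups) we will invoke Müller's classification. For primitive groups containing a regular abelian subgroup we will invoke the classification of primitive groups with a regular abelian subgroup, which goes back to Li: such a group is affine, almost simple from an explicit list, or of product action type. In each case we only need to go through the lists and use the arithmetic of $n=2^em$, with $e\le 4$ and $m$ odd and square-free, to exclude the cases that are not $2$-transitive.

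Step 1 is the affine case. If $G\le \AGL_d(p)$ then $n=p^d$. Since $m$ is square-free, either $d=1$ or $p=2$ with $n\in\{4,8,16\}$.
\begin{itemize}
\item If $d=1$, either $G$ is $2$-transitive or $G<\AGL_1(p)$, which gives the first exception.
\item For $n=4$ and $n=8$, the primitive groups are easily checked to be $2$-transitive. For $n=4$ the primitive groups are $A_4$ and $S_4$. For $n=8$ a primitive affine group contains $\AGL_1(8)$ or $\AGL_3(2)$, and both are $2$-transitive.
\item Degree $16$ is left as an exception.
\end{itemize}

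Step 2 is the product action and diagonal types. A product action $H\wr S_k$ on $\Delta^k$ with $k\ge2$ forces $n=|\Delta|^k$. Square-freeness of $m$ and $e\le4$ then leave only $n=16=4^2$ (already excepted) or $n$ a square of a number at least $5$, which is impossible. Diagonal and twisted wreath types have degree $|T|^{k-1}$ for a nonabelian simple group $T$. Every such $|T|$ is divisible by $4$ and not square-free in its odd part in a way that fits $n$ (for instance $|A_5|=60$ already has $4\mid 60$, and higher powers only get worse). We rule these out by checking that they do not even appear in the lists from Müller and Li.

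Step 3, the almost simple case, is the main work. From Müller's list for the bicyclic case and Li's list for the regular abelian case, the almost simple primitive groups that are not $2$-transitive are few:
\begin{itemize}
\item $S_k$ or $A_k$ acting on $2$-subsets, of degree $\binom{k}{2}$;
\item $\PSL_2(q)$-type actions and some sporadic actions of degrees such as $10$, $28$, $36$, $40$, $45$, $\ldots$;
\item rank-$3$ actions such as $\PGL_2(7)$ on $21$ or $28$ points.
\end{itemize}
For each item we test the divisibility conditions. The degree must be $2^em$ with $m$ square-free and $e\le4$. For the cyclic-two-orbit hypothesis the degree must also be even. For the regular abelian hypothesis the regular abelian subgroup must be of order $n$. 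For $S_k$ on $2$-subsets, a regular abelian subgroup exists only for $k=5$ (degree $10$), and a cyclic semiregular subgroup with two orbits on $\binom{k}{2}$ points also forces $k=5$, via the $5$-cycle. This gives the exception $A_5\le G\le S_5$ of degree $10$.

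The main obstacle is making sure nothing in those lists slips through. The remaining candidates are $\PSL_2$-type actions of degree $q(q\pm1)/2$ and Müller's sporadic examples. For each of them we will show that either the degree violates the $2^em$ shape, or the group has no element with two equal cycles of length $n/2$ and no regular abelian subgroup. This is a finite check. For any item whose cycle structure is unclear we expect to have to verify it with character or fixed-point computations.
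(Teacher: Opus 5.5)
Your proposal is correct in outline. The bicyclic half matches the paper, but for the regular abelian half you take a heavier route than the paper, and several intermediate claims are false and should be fixed.

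\textbf{Comparison of routes.} For the bicyclic hypothesis you do what the paper does. You reduce to M\"uller's classification, discard the entries of prime-power or square degree and the $2$-transitive ones, and what survives is $A_5\le G\le S_5$ on $10$ points.

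For the regular abelian hypothesis the paper's route is shorter. It first disposes of prime-power degree directly: Burnside's theorem when $n=p$, inspection of the primitive groups of degrees $4$ and $8$, and $n=16$ excepted. Once $n$ has two distinct prime divisors, some odd prime $p$ divides $n$ exactly. So a regular abelian subgroup has a Sylow $p$-subgroup of order $p$, hence cyclic. Wielandt's B-group theorem (abelian groups of composite order with a cyclic Sylow subgroup are B-groups) then gives $2$-transitivity immediately, with no classification of primitive groups having an abelian regular subgroup.

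Your O'Nan--Scott sweep through Li's classification also works. Affine type forces $n=p^d$, product action forces $n$ to be a proper power (which you exclude correctly), and the almost simple groups in Li's list are all $2$-transitive. The cost is that you invoke a CFSG-dependent theorem where a classical one suffices. It also makes your Step~3 look as if there is work to do on the abelian side, when there is none.

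\textbf{Claims to correct.}

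In Step~2, arithmetic does not exclude diagonal type with two factors, whose degree is $|T|$. For instance, $|A_5|=60=2^2\cdot 15$, $|\PSL_2(7)|=168=2^3\cdot 21$ and $|\PSL_2(11)|=660=2^2\cdot 165$ all have the required shape $2^em$. Only your fallback carries this step: diagonal and twisted wreath types do not occur in M\"uller's or Li's lists.

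In Step~3, $S_5$ acting on $2$-subsets has no regular abelian subgroup. The only abelian group of order $10$ is cyclic, and $S_5$ has no element of order $10$. The degree-$10$ exception comes solely from the bicyclic hypothesis, via a $5$-cycle. Its orbits $\{\{i,i+1\}\}$ and $\{\{i,i+2\}\}$ on $2$-subsets both have length $5$.

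Your Step~1 treats only affine groups. The non-affine primitive groups of prime degree and of degree $8$ must also be shown to be $2$-transitive. This follows from Burnside's theorem and the known list in degree $8$, or from the classifications you cite, and you should say so explicitly.
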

	
	\begin{proof}
		Assume first that $n$ has a unique prime divisor, say $p$. 
		If $p$ is odd or $n = p = 2$, then $n = p$, and by Burnside’s Theorem~\cite[Theorem~3.5B]{DixonM1996}, either $G < \AGL_1(p)$ or $G$ is $2$-transitive 
		(the statement of Theorem~3.5B also includes the possibility $G = \AGL_1(p)$, but this group is $2$-transitive). 
		If $n > p = 2$, then $n = 4, 8$, or $16$. 
		Since every primitive group of degree $4$ or $8$ is $2$-transitive, we are left with $n = 16$, which is an exception.
		
		Now assume that $n$ is divisible by two primes, at least one of which is odd—call it $p$. 
		If $G$ contains a regular abelian subgroup, then its Sylow $p$-subgroups are cyclic, and by~\cite[Theorem~25.4]{Wielandt1964}, $G$ is $2$-transitive. 
		Otherwise, by hypothesis, $G$ has a semiregular cyclic subgroup $C$. 
		As $C$ is semiregular, each of its orbits has length $k = n/2$. 
		All primitive groups containing a cyclic subgroup with two orbits were classified in~\cite[Theorem~3.3]{Muller2013}. 
		After eliminating groups listed in~\cite[Theorem~3.3]{Muller2013} whose degree is a prime power or a square 
		(impossible since $p^2 \nmid n$), as well as $2$-transitive groups, 
		the only remaining possibility that contains a cyclic subgroup with two orbits of equal length is 
		$k = 5$ and $G \in \{A_5, S_5\}$.
	\end{proof}
	
	Our next result will rely on Zsigmondy’s Theorem, which we now state.
	
	\begin{thrm}[Zsigmondy’s Theorem]\label{Zsigmondythrm}
		Let $a$ and $k$ be integers with $a,k\ge 2$. Then there exists a prime $p$ such that $p$ divides $a^k - 1$ but does not divide $a^\ell - 1$ for any positive integer $\ell < k$, with the following exceptions:
		\begin{itemize}
			\item $a = 2$ and $k = 6$; or
			\item $a + 1$ is a power of two and $k = 2$.
		\end{itemize}
	\end{thrm}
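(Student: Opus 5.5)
Since Theorem~\ref{Zsigmondythrm} is classical, I would prove it by the standard cyclotomic-polynomial argument rather than by anything from this paper. Write $\Phi_k(x)$ for the $k$-th cyclotomic polynomial, so that $a^k-1=\prod_{d\mid k}\Phi_d(a)$. A prime $p$ with $p\mid a^k-1$ and $p\nmid a^\ell-1$ for all $\ell<k$ is exactly a prime for which the multiplicative order of $a$ modulo $p$ is $k$. Every such prime divides $\Phi_k(a)$. The plan is therefore to show that $\Phi_k(a)$ has a prime divisor $p$ with $\mathrm{ord}_p(a)=k$, except in the two listed cases.

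\textbf{Step 1 (arithmetic of non-primitive divisors).} Let $p\mid\Phi_k(a)$ and set $d=\mathrm{ord}_p(a)$, so that $d\mid k$. I will show: if $d<k$, then $k=dp^s$ with $s\ge1$, and $p$ is the largest prime divisor of $k$. The second claim holds because $d\mid p-1$, so every prime dividing $k$ other than $p$ is smaller than $p$.

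The key tool is the lifting-the-exponent lemma, which gives $v_p(a^{dp^j}-1)=v_p(a^d-1)+j$ for $p$ odd. Comparing $p$-adic valuations in $a^k-1=\prod_{e\mid k}\Phi_e(a)$ and $a^{k/p}-1=\prod_{e\mid k/p}\Phi_e(a)$ then yields $v_p(\Phi_k(a))=1$. Here one also checks that $p$ does not divide $\Phi_e(a)$ for the remaining divisors $e$, since those $e$ are not of the form $dp^j$.

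For $p=2$ the same computation works unless $k=2$, where $\Phi_2(a)=a+1$ may be divisible by a high power of $2$. Hence, for $k\ge3$, if no primitive prime divisor exists, then $\Phi_k(a)\in\{1,P\}$, where $P$ is the largest prime factor of $k$.

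\textbf{Step 2 (size estimate).} Write $\Phi_k(a)=\prod_{\zeta}(a-\zeta)$ over the primitive $k$-th roots of unity. Each factor satisfies $|a-\zeta|\ge a-1$, with strict inequality since $\zeta\ne1$ for $k\ge2$. Hence $\Phi_k(a)>(a-1)^{\varphi(k)}$. Write $k=P^r m'$ with $P\nmid m'$; then $\varphi(k)\ge P-1$.

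For $a\ge3$ this gives $\Phi_k(a)>2^{P-1}\ge P$, which contradicts Step 1. For $a=2$ the crude bound is useless, so I would refine it: a primitive $k$-th root $\zeta$ with $k\ge3$ has real part at most $\cos(2\pi/k)<1$, and this gives $|2-\zeta|>1$. Many factors then satisfy $|2-\zeta|\ge\sqrt{3}$, namely those $\zeta$ with real part $\le1/2$. For $k\ge7$ one shows directly that this product exceeds $P\le k$. The small cases $k\le 6$ are checked by hand: $\Phi_3(2)=7$, $\Phi_4(2)=5$, $\Phi_5(2)=31$, all prime and primitive, while $\Phi_6(2)=3$, which divides $2^2-1$. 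This is the first exception.

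\textbf{Step 3 ($k=2$).} Here $\Phi_2(a)=a+1$. A primitive prime divisor is an odd prime dividing $a+1$, because any odd $p$ dividing both $a+1$ and $a-1$ would divide $2$. So no primitive prime divisor exists exactly when $a+1$ is a power of $2$, which is the second exception.

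The main obstacle is Step 1, specifically the precise valuation bookkeeping showing $v_p(\Phi_k(a))\le1$ and the separate treatment of $p=2$. Once that is established, the bounds in Step 2 are routine apart from the case $a=2$.
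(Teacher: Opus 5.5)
The paper gives no proof of this statement to compare with. Zsigmondy's theorem is quoted as a classical result and used only as a black box in the proof of Proposition~\ref{090921a}.

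Your outline is the standard cyclotomic proof. Step~1, Step~3 and the case $a\ge 3$ of Step~2 are fine. One fix is needed in Step~1: you justify the fact that $p\mid\Phi_e(a)$ forces $e=dp^j$ by appeal to that same fact, which is the first claim you set out to prove. It follows for odd $p$ from $v_p(a^m-1)=0$ when $d\nmid m$ and $v_p(a^m-1)=v_p(a^d-1)+v_p(m/d)$ when $d\mid m$. Möbius inversion applied to $v_p(\Phi_e(a))=\sum_{f\mid e}\mu(e/f)\,v_p(a^f-1)$ then gives $v_p(\Phi_e(a))=0$ unless $e\in\{d,dp,dp^2,\dots\}$, and $v_p(\Phi_{dp^j}(a))=1$ for $j\ge1$.

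The genuine gap is the case $a=2$, $k\ge 7$ of Step~2. The sentence ``one shows directly that this product exceeds $P$'' carries all the work, and the tool you propose does not deliver it. For $k=10$, only the two roots $e^{\pm 3\pi i/5}$ have real part $\le 1/2$. Your estimate therefore gives only $\Phi_{10}(2)>3$, which does not exclude $\Phi_{10}(2)=P=5$. In general, the $\sqrt3$-count would need a lower bound, in terms of $P$, on the number of residues coprime to $k$ in $[k/6,5k/6]$, and you supply nothing of the sort.

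The standard repair uses the structure you already have from Step~1. If there is no primitive prime divisor, then $k=dP^s$ with $d\mid P-1$ and $s\ge 1$. With $b=a^{P^{s-1}}\ge 2$ this gives $\Phi_k(a)=\Phi_{dP}(b)=\Phi_d(b^P)/\Phi_d(b)$.
\begin{itemize}
\item For $d=1$ this equals $1+b+\cdots+b^{P-1}>P$.
\item For $d\ge 2$ it exceeds $\bigl((b^P-1)/(b+1)\bigr)^{\varphi(d)}\ge (2^P-1)/3$, which is larger than $P$ once $P\ge 5$.
\item For $P=3$ and $d=2$ it equals $b^2-b+1$, which is $3$ only when $b=2$, i.e.\ $(a,k)=(2,6)$.
\end{itemize}
This handles every $a\ge 2$ at once, with $k\ge 3$; your Step~3 covers $k=2$.

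Alternatively, $x\mapsto\log(5-4x)$ is concave on $[-1,1]$, so it lies above its chord: $\log|2-\zeta|^2\ge(1-\mathrm{Re}\,\zeta)\log 3$. The primitive $k$-th roots of unity sum to $\mu(k)$, so this yields $\Phi_k(2)\ge 3^{(\varphi(k)-\mu(k))/2}$. Using $\varphi(k)\ge P-1$, and $\varphi(k)\ge 4$ for $k\ge 7$, this exceeds $P$ for every $k\ge 7$.
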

	
	\begin{defin}
		A prime $p$ satisfying the conclusion of Zsigmondy’s Theorem is called a \textbf{primitive prime divisor}.
	\end{defin}
	If $r$ is a primitive prime divisor of $a^k - 1$, then $a$ is coprime to $r$, and $k$ is the order of $a$ in $\Z_r^*$. 
	Therefore, $r \equiv 1 \pmod{k}$, and in particular, $r - 1 \ge k$.
	
	\begin{defin}
		For a group $G$, we denote by $\soc(G)$ the \textbf{socle} of $G$, that is, the subgroup generated by all minimal normal subgroups of $G$.
	\end{defin}
	
	In our generalisation of P\`alfy’s result, we show that, in all but a small number of cases, there exists a Sylow $r$-subgroup of $G$ of order $r$, for some odd prime $r$ dividing the degree. 
	This implies that the Sylow $r$-subgroups act semiregularly, since  no point stabiliser has order divisible by $r$.
	
	\begin{prop}\label{090921a} 
		Let $n$ be a positive integer with $n = 2^e m$, where $0 \le e \le 4$ and $m$ is odd and square-free, and let $G \leq S_n$ be a primitive subgroup distinct from $A_n$ and $S_n$. 
		If $G$ contains a semiregular cyclic subgroup with two orbits or a regular abelian subgroup, then one of the following holds:
		\begin{enumerate}
			\item\label{pablo1} $n = 8$ and $G = \Z_2^3 \rtimes G_0$, where $G_0 \leq \GL_3(2)$ acts transitively on the non-zero vectors;
			\item\label{pablo2} $n = 16$ and $G = \Z_2^4 \rtimes G_0$, where $G_0 \leq \GL_4(2)$;
			\item\label{pablo22}$n=8$ and $G=\mathrm{PSL}_2(7)$ or $G=\mathrm{PGL}_2(7)$;
			\item\label{pablo3} $n = 12$ and $G = M_{12}$;
			\item\label{pablo4} $n = 24$ and $G = M_{24}$;
			\item\label{pablo5} there exists an odd prime $r \mid n$ such that a Sylow $r$-subgroup of $G$ has order $r$.
		\end{enumerate}
	\end{prop}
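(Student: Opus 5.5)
We will use Proposition~\ref{170921a} to reduce to the $2$-transitive case, and then go through the classification of finite $2$-transitive groups. Concretely, let $G$ be as in the statement. If $n=p$ is prime, then the Sylow $p$-subgroup of $S_p$ already has order $p$, so (\ref{pablo5}) holds with $r=p$ (note $p$ is odd unless $n=2$, and then $G=S_2$ is excluded). If $n=10$ and $A_5\le G\le S_5$, then $|G|$ divides $120$, so $r=5$ has a Sylow subgroup of order $5$. If $n=16$ and $G$ is affine, we are in (\ref{pablo2}). A primitive group of degree $16$ that is not affine is almost simple, and its socle is a simple group such as $A_{16}$; if any non-affine primitive group of degree $16$ survives, it will be handled below together with the almost simple $2$-transitive groups. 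Thus we may assume $G$ is $2$-transitive. Since $p^2\nmid n$ for every odd $p$, the degree $n$ is a prime power only when $n\in\{2,4,8,16\}$ or $n=p$.

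For affine $2$-transitive $G$ with $n=2^e$ and $e\le 4$, the cases $e\le 2$ give $A_n$ or $S_n$. The cases $e=3,4$ give (\ref{pablo1}) and (\ref{pablo2}), because $2$-transitivity of an affine group means exactly that $G_0$ is transitive on the nonzero vectors. When $n=p$, the previous paragraph applies.

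So suppose $G$ is almost simple with socle $T$, and go through the list of $2$-transitive almost simple groups.
\begin{itemize}
\item $T=A_n$ is excluded by hypothesis.
\item For $T=\PSL_d(q)$ acting on points, $n=(q^d-1)/(q-1)$. For $d\ge3$ (apart from small exceptions), take $r$ to be a primitive prime divisor of $q^d-1$ from Theorem~\ref{Zsigmondythrm}. Then $r\mid n$, and $r$ divides $|\GL_d(q)|$ only through the factor $q^d-1$. Moreover $r>d$, so $r\nmid|\mathrm{Out}|$ provided $r$ does not divide the field-automorphism order $f$; this follows from the fact that $r\equiv 1\pmod{df}$ when we use the primitive divisor of $p^{df}-1$. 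Since $n$ has square-free odd part, $r^2\nmid n$, and therefore $|G|_r=r$.
\item For $d=2$ we have $n=q+1$, and we take an odd prime $r\mid q+1$. It is a primitive divisor of $p^{2f}-1$ whenever one exists. The Zsigmondy exceptions are $q=2^3$, which gives $n=9$ and is impossible, and $q$ a Mersenne prime. In the latter case $n=q+1$ is a power of $2$, so $n=8$ and $q=7$, which gives (\ref{pablo22}).
\item $\PSU_3(q)$, Suzuki and Ree groups have degrees $q^3+1$, $q^2+1$ and $q^3+1$. For these we use primitive prime divisors of $p^{6f}-1$ or $p^{4f}-1$ in the same way, checking that the odd part of the degree is square-free only when such an $r$ works.
\item The sporadic and small cases are checked by hand: $\mathrm{Sp}_{2d}(2)$ of degrees $2^{2d-1}\pm2^{d-1}$, $\PSL_2(11)$ on $11$ points, $A_7$ on $15$ points, $M_{11},M_{12},M_{22},M_{23},M_{24}$, $\mathrm{HS}$ on $176$ points and $\mathrm{Co}_3$ on $276$ points. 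For the $\mathrm{Sp}$ family the degree is $2^{d-1}(2^d\pm1)$, and we take a primitive prime divisor of $2^{2d}-1$ (or of $2^d-1$) dividing $2^d\pm1$. This fails only for small $d$, which we handle by hand; note also that $2^{d-1}\le 16$ already bounds $d\le5$.
\end{itemize}
In the Mathieu cases $M_{12}$ (with $n=12$, where the $3$-part of $|M_{12}|$ is $27$) and $M_{24}$ (with $n=24$, where the $3$-part is $27$), every odd prime dividing $n$ divides $|G|$ to a higher power, which gives (\ref{pablo3}) and (\ref{pablo4}). In the other sporadic cases a suitable $r$ exists; for example $11$ for $M_{11}$ and $M_{22}$, and $23$ for $M_{23}$.

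We expect the main obstacle to be the bookkeeping in the Lie-type families: making sure the chosen $r$ divides $n$, divides $|T|$ exactly once, and does not divide the order of the outer automorphism group. The last point is handled by choosing $r$ to be a primitive divisor of $p^{kf}-1$ rather than of $q^k-1$, which forces $r\equiv1\pmod{kf}$ and hence $r>f$. We must also treat separately the finitely many Zsigmondy exceptions and the small degrees allowed by $e\le4$.
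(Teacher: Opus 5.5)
You have the right overall strategy: reduce to the $2$-transitive case with Proposition~\ref{170921a}, then go through the classification using Zsigmondy primes. This is the paper's approach. There is, however, a genuine hole. After the reduction you never use the hypothesis about a regular abelian or semiregular cyclic subgroup. Yet two $2$-transitive groups of admissible degree make conclusion (6) false and do not appear in (1)--(5).

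\emph{$M_{11}$ on $12$ points.} Your choice $r=11$ only covers the action of degree $11$, since $11\nmid 12$. The only odd prime dividing $12$ is $3$, and $|M_{11}|_3=9$.

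\emph{$\mathrm{Sp}_8(2)$ on $120=8\cdot 15$ points} (stabiliser $\mathrm{O}_8^-(2)$). Here $|\mathrm{Sp}_8(2)|_3=3^5$ and $|\mathrm{Sp}_8(2)|_5=5^2$, so no odd $r\mid 120$ works. In particular the primitive prime divisor $5$ of $2^4-1$ fails, because it also divides the factor $2^8-1$ of the group order. The same obstruction occurs for every even $d$.

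So ``handling small $d$ by hand'' cannot succeed by exhibiting $r$. Both groups must instead be excluded by showing they violate the hypothesis of the proposition. For $M_{11}$ the paper argues as follows. The point stabiliser $\PSL_2(11)$ has even order, and $M_{11}$ has a single class of involutions, so every involution fixes a point. This rules out both a regular subgroup of order $12$ and a semiregular cyclic subgroup of order $6$. For $\mathrm{Sp}_8(2)$ on $120$ points, the paper verifies by a Magma computation that there is neither a regular abelian subgroup nor a semiregular cyclic subgroup with two orbits.

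There are also smaller loose ends.
\begin{itemize}
\item For $n=16$ you cannot defer a possible non-affine, non-$2$-transitive primitive group to the later $2$-transitive analysis, since that analysis does not cover such groups. You need the fact that every primitive group of degree $16$ other than $A_{16}$ and $S_{16}$ is affine. The paper gets this from Magma; you could cite the list of primitive groups of degree $16$.
\item You run every degree through the classification, whereas the paper disposes of square-free $n$ with P\'alfy's Lemma~2.1 and then only needs $4\mid n$. Because of this you must also treat the Zsigmondy exception $\PSL_3(4)$ on $21$ points explicitly; it works with $r=7$.
\item You should list $\PSL_2(8)$ on $28$ points, the case $q=3$ of the Ree family; it works with $r=7$.
\end{itemize}
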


	\begin{proof}
		By Proposition~\ref{170921a}, the group $G$ is either $2$-transitive, has degree $10$ or $16$, or satisfies $n = p$ for some prime $p$ with $G < \AGL_1(p)$. 
		If $n = 10$, then $G \in \{A_5, S_5\}$ and satisfies~\eqref{pablo5} with $r = 5$. 
		If $n = 16$, then using the computer algebra system Magma~\cite{MAGMA} we deduce $G$ is affine primitive and satisfies~\eqref{pablo2}. 
		If $n = p$ is prime and $G < \AGL_1(p)$, then~\eqref{pablo5} holds with $r = p$. 
		Hence, for the remainder of the proof, we assume that $G$ is $2$-transitive.
		
		If $G$ is affine $2$-transitive, then $n$ is a power of  some prime $p$. 
		For $p = 2$, excluding $A_n$ and $S_n$ eliminates $e = 1,2$. 
		If $e = 3$ or $4$, then $G$ satisfies~\eqref{pablo1} or~\eqref{pablo2}. 
		If $p$ is odd, then $n = p$, and $G$ satisfies~\eqref{pablo5}.
		
		If $n$ is square-free, then by~\cite[Lemma~2.1]{Palfy1987} there exists a Sylow $p$-subgroup of order $p$ for some $p \mid n$. 
		Although not stated explicitly, P\`alfy’s proof shows that $p \ne 2$, giving~\eqref{pablo5}.
		
		We may therefore assume $e \ge 2$, i.e., $4 \mid n$. 
		Let $T = \soc(G)$. 
		We inspect all non-affine $2$-transitive groups in~\cite[Table~7.4]{Cameron1999} whose degree is divisible by four, aiming to show that $G$ satisfies~\eqref{pablo22},~\eqref{pablo3},~\eqref{pablo4} or~\eqref{pablo5}.
		
		\begin{case} $T\cong \PSL_d(q), n = \frac{q^d-1}{q-1}, [G:T]\mid \gcd(d,q-1)f$ where $q=p^f$ and $p$ is a prime.
		\end{case}
		
		Assume first that $p^{df} - 1$ has a primitive prime divisor $r$. 
		Then $r$ is odd, divides $n = \frac{q^{d} - 1}{q - 1}$, and satisfies $r \equiv 1 \pmod{df}$. 
		Hence $r \nmid |G|/n$, so a Sylow $r$-subgroup of $G$ has order $r$, yielding~\eqref{pablo5}. 
		
		If $p^{df} - 1$ has no primitive prime divisor, then by Zsigmondy’s Theorem  either 
		$df = 2$ and $n = q + 1$ is a power of $2$, or $p = 2$ and $df = 6$.  
		
		In the first case, $df = 2$ and $n = q + 1$ imply $d = 2$, $f = 1$, and $q = p$. 
		Since $n = 2^e m$ with $m$ odd and $e \le 4$, we obtain $e = 3$, $p = 7$, and $n = 8$, corresponding to~\eqref{pablo22}.  
		If instead $df = 6$ and $p = 2$, then $n$ is odd, contradicting the fact that $n$ is divisible by $4$.

		\begin{case} 
			$T\cong {\rm Sp}_{2d}(2), n = 2^{d-1}(2^d \pm 1), d \geq 3$.
		\end{case}
		
		Since $32 \nmid n$, we have $d \le 5$. 
		If $d = 3$ and $n = 28$, then $7 \mid |G|$ but $7^2 \nmid |G|$, giving~\eqref{pablo5} with $r = 7$. 
		Similar arguments apply for $d = 4$, $n = 8 \cdot 17$ with $r = 17$; 
		for $d = 5$, $n = 16 \cdot 31$ with $r = 31$; and for $d = 5$, $n = 16 \cdot 33$ with $r = 11$.  
		
		The remaining case with $d = 3$ and $n = 36$ is excluded, since $n$ is divisible by $9$ and hence does not satisfy our hypothesis on the degree. 
		The final case is $d = 4$ and $n = 8 \cdot 15$. 
		A Magma computation shows that in this action $G$ contains neither an abelian regular subgroup nor a semiregular subgroup with two orbits; hence $G$ does not satisfy our hypothesis.
		
		\begin{case} $T\cong \PSU_3(q), n = q^3+1$.
		\end{case}
		
		In this case, $|G|$ divides 
		$|\Aut(T)| = (q^3 + 1)q^3(q^3 - 1) \cdot \gcd(3, q + 1) \cdot f$,
		where $q = p^f$ and $p$ is prime. 
		Let $r$ be a primitive prime divisor of $p^{6f} - 1 = q^6 - 1$. 
		Then $r \mid q^3 + 1$ and $r \equiv 1 \pmod{6f}$. 
		Hence $r$ is coprime to $q^3(q^3 - 1) \cdot \gcd(3, q + 1) \cdot f$, yielding~\eqref{pablo5}. 
		
		If $p^{6f} - 1 = q^6 - 1$ has no primitive prime divisor, then by Zsigmondy’s Theorem we have $6f = 6$ and $p = 2$. 
		In this case, $n = q^3 + 1$ is odd, contradicting $4 \mid n$.
		
		\begin{case} $T\cong Sz(q), n=q^2+1, q = 2^{2s+1} > 2$.
		\end{case}
		
		In this case $n$ is odd, contradicting $
		4\mid n$.
		
		\begin{case} $T\cong R(q), n=q^3+1, q = 3^{2s+1} > 3$.
		\end{case}
		
		In this case $|R(q)|=q^3(q^3+1)(q-1)$ and  $[G:T]\,|\, (2s+1)$. Let $r$ be a primitive prime divisor of $3^{6(2s+1)} - 1 = q^6 - 1$.   Then $r$ divides $q^3+1=n$ and $r\equiv 1\mod{6(2s+1)}$.  Therefore $r$ is coprime to $[G:T]$, $q$ and $q-1$. We conclude that~\eqref{pablo5} holds.
		
		\begin{case} $G=T\cong M_{11},n=12$.
		\end{case}
		
		Since $|G_\omega|$ is even, $G_\omega$ contains an involution. Since $M_{11}$ has a unique conjugacy class of involutions \cite{Atlas}, this implies that every involution of $M_{11}$ has a fixed point. Therefore, this representation of $M_{11}$ has no regular subgroup nor any semiregular cyclic subgroups of order $6$.
		
		\begin{case} $G=T\cong M_{12},n=12$, or $G=T\cong M_{24},n=24$.
		\end{case}
		
		These situations are~\eqref{pablo3} and~\eqref{pablo4}.

		\begin{case} All other possibilities.
		\end{case}
		
		Since $n=4m$, $m$ square-free, we are left with only three cases:
		$$(T,n)\in \{(\PSL_2(8),28), ({\rm HS}, 176), ({\rm Co}_3,276)\}.$$
		The group $\PSL_(8)$ has order $504$ and hence part~\eqref{pablo5} follows with $r=7$; similarly, ${\rm HS}$ satisfies~\eqref{pablo5} with $r=11$, and ${\rm Co}_3$ satisfies~\eqref{pablo5} with $r=23$.
	\end{proof}

	\section{Some Tools}\label{sec:some tools}

	\begin{defin}
		Let $X$ be a set. For a permutation group $H \le S_X$, the \textbf{support} of $H$, denoted $\supp(H)$, 
		is the set of all $x \in X$ for which there exists $h \in H$ with $h(x) \ne x$.
	\end{defin}
	
	\begin{defin}
		Let $G \le S_X$ and $Y \subseteq X$ be $G$-invariant. 
		We denote by $\mathsf{Orb}(G,Y)$ the set of $G$-orbits on $Y$.
	\end{defin}
	
	The next lemma is a variation of~\cite[Lemma~2.2]{Dobson2009}.

	\begin{lem}\label{support blocks}\label{simplegroupblock-cor}
		Let $G\leq S_X$ be transitive with a normal block system ${\mathcal  B}$. Abbreviate $N=\fix_G({\mathcal  B})$ and fix an arbitrary $B_0\in\mathcal{B}$.  If $T=\soc(N^{B_0})$ is a transitive nonabelian simple group, then
		\begin{enumerate}
			\item\label{parta} $\soc(N)= T_1\times \cdots \times T_k$, where all simple factors are isomorphic to $T$. The sets $C_i:=\supp(T_i), 1\le i\le k$ form a block system ${\mathcal C}$ of $G$ with ${\mathcal  B}\preceq {\mathcal  C}$;
			\item\label{parta1}
			for any $T_i$  and any block $B\in\mathcal{B}$ contained in $C_i$, the point-wise stabilizers $N_{B}$ and $N_{C_i}$ are equal and $N_{C_i}\cap T_i =1$;
			\item\label{partb} suppose that $\soc(N)$ contains two isomorphic semiregular subgroups $U$ and $V$.  If any two semiregular subgroups of $T$ isomorphic to $U^{B_0}$ are conjugate in $T$, then there exists $\delta\in \fix_G({\mathcal  B})$ such that $\mathsf{Orb}(U,X)=\mathsf{Orb}(V^\delta,X)$;
			\item\label{partc} an element $g\in N$ belongs to  $\soc(N)$ if and only if $g^{B}\in \soc(N^{B})$ holds for each $B\in\mathcal{B}$.
		\end{enumerate}
	\end{lem}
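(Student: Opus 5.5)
I will prove the four parts in order, since each builds on the previous one. For \eqref{parta}, write $N\le \prod_{B\in\mathcal B} N^{B}$ as a subdirect product, where each $N^B$ is permutation isomorphic to $N^{B_0}$ because $N\trianglelefteq G$ and $G$ permutes the blocks transitively. Let $M$ be a minimal normal subgroup of $N$. Its projection $M^B$ is normal in $N^B$. Since $\soc(N^B)\cong T$ is nonabelian simple and transitive, it is the unique minimal normal subgroup of $N^B$: its centralizer is semiregular and normalized by $T$, hence trivial. So $M^B$ is either $1$ or contains $T^B$. If $M$ were abelian, then every $M^B$ would be an abelian normal subgroup of $N^B$, hence trivial, a contradiction. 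So $M$ is nonabelian and $M\cong T_1\times\cdots\times T_s$, and each projection $M^B$ is either $1$ or isomorphic to $T$. Hence each simple factor $T_i$ acts as $T$ on the blocks it moves, and trivially on the others.

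For a fixed block $B$, at most one simple factor of $\soc(N)$ can act nontrivially on $B$. Indeed, two distinct factors commute, and two commuting subgroups of $N^B$ each containing $T$ would put $T$ inside its own centralizer, which is impossible. Therefore the supports $C_i=\supp(T_i)$ are pairwise disjoint unions of blocks of $\mathcal B$. They cover $X$: $\soc(N)^B\ne 1$ for each $B$, because otherwise $\soc(N)$ fixes $B$ pointwise, and normality in $G$ together with transitivity of $G$ on $\mathcal B$ would force $\soc(N)=1$. Since $G$ normalizes $\soc(N)$, it permutes the $T_i$ by conjugation and hence permutes the $C_i$. So $\mathcal C=\{C_i\}$ is a $G$-invariant partition, i.e.\ a block system, with $\mathcal B\preceq\mathcal C$.

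For \eqref{parta1}, $T_i$ acts faithfully on $C_i$ and fixes everything outside $C_i$. Take $B\subseteq C_i$. The subgroup $N_B$ is normal in $N$, since $N$ fixes each block, and its projection to every block $B'\subseteq C_i$ is a normal subgroup of $N^{B'}$. If that projection were nontrivial it would contain $T^{B'}$. I then want to propagate triviality from $B$ to all of $C_i$: the diagonal-type structure of $T_i$ across the blocks of $C_i$ should force elements trivial on $B$ to be trivial on all of $C_i$. I expect this step to be the main obstacle. The plan is to use the fact that $[N_B,T_i]\le N_B\cap T_i$, and that $N_B\cap T_i$ is a normal subgroup of the simple group $T_i$ acting trivially on $B$, hence equals $1$. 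Thus $N_B$ centralizes $T_i$. Since $T_i$ acts on each $B'\subseteq C_i$ as a transitive group with trivial centralizer in $N^{B'}$, the subgroup $N_B$ acts trivially on each such $B'$. This gives $N_B=N_{C_i}$ and $N_{C_i}\cap T_i=1$.

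For \eqref{partb}, project $U$ and $V$ into each $T_i$ and compare them blockwise. On $B_0$, the groups $U^{B_0}$ and $V^{B_0}$ lie in $T$ and are semiregular and isomorphic, so by hypothesis they are conjugate by an element of $T=T_i^{B_0}$. I lift this to an element $t_i\in T_i$, and using part \eqref{parta1} the conjugation identifies the orbits on the whole of $C_i$. Taking $\delta=\prod_i t_i\in\soc(N)\le\fix_G(\mathcal B)$ then gives $\mathsf{Orb}(U,X)=\mathsf{Orb}(V^\delta,X)$. Finally, \eqref{partc} follows because $\soc(N)=\prod T_i$, and by \eqref{parta1} an element is determined on $C_i$ by its action on a single block. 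So $g$ lies in $\soc(N)$ exactly when each restriction $g^{C_i}$ agrees with an element of $T_i$, which holds exactly when $g^B\in T^B$ for every block $B$.
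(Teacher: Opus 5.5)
Your proposal is correct and follows essentially the paper's route: part~\eqref{parta1} rests on the same observation that $[N_B,T_i]\le N_B\cap T_i$ with $T_i$ simple. Parts~\eqref{partb} and~\eqref{partc} use the paper's device that, since $N_B=N_{C_i}$, an element of $N$ is determined on $C_i$ by its restriction to a single block; in~\eqref{partb} you still need, as the paper does, an element of $G$ carrying $B_0$ to a block inside each $C_i$ in order to apply the conjugacy hypothesis there. One justification needs repair: ${\bf C}_{N^B}(\soc(N^B))=1$ does not follow from semiregularity, since a regular simple group is centralized by its opposite regular copy; it holds because this centralizer is normal in $N^B$ and meets the simple socle trivially, so a nontrivial one would contain a second minimal normal subgroup of $N^B$.
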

    
	\begin{proof}  Since $G$ acts transitively on ${\mathcal  B}$ and $N\unlhd G$, the group $N^{B_0}$ is permutation isomorphic to $N^B$ for each $B\in{\mathcal  B}$ by \cite[Theorem 1.6A (ii)]{DixonM1996}. In particular, $\mathsf{Soc}(N^{B_0})\cong \mathsf{Soc}(N^{B})$ for each $B\in{\mathcal  B}$.
		
		Part~\eqref{parta}. Since for any $B\in\mathcal{B}$ the map $g\mapsto g^B,g\in N$, is a group homomorphism, $\mathsf{Soc}(N)^B\unlhd \mathsf{Soc}(N^B)\cong T$. Therefore  either $\mathsf{Soc}(N)^B$ is trivial or $\mathsf{Soc}(N)^B = \mathsf{Soc}(N^B)$. The first case is impossible. Thus $\mathsf{Soc}(N)^B = \mathsf{Soc}(N^B) = T$.
		
		Every subgroup $L\leq N$ is a subdirect product of its projections $L^{B},B\in{\mathcal  B}$. Therefore $\mathsf{Soc}(N)$ is a subdirect subgroup of $T^m$ where $m=|{\mathcal  B}|$. Since $T$ is a simple group, $\mathsf{Soc}(N)\cong T^k$ for some $k\leq m$. Thus $\mathsf{Soc}(N) = T_1\times \cdots \times T_k$, where $T_1,\ldots,T_k$ are normal subgroups of $\mathsf{Soc}(N)$ isomorphic to $T$. It remains to show that $\supp(T_i)\cap \supp(T_j)=\emptyset$ whenever $i\neq j$.
		Indeed, if $\supp(T_i)\cap \supp(T_j)$ is nonempty, then there exists
		$B\in {\mathcal  B}$ such that $T_i^{B}\neq 1 \neq T_j^{B}$. Then $T^2\cong (T_i\times T_j)^{B}\leq \mathsf{Soc}(N)^{B} \cong T$, a contradiction.
		Thus ${\mathcal  C} = \{\supp(T_i):1\le i\le k\}$ is a block system of $G$ with ${\mathcal  B}\preceq{\mathcal  C}$.
		
		Part~\eqref{parta1}. To ease the proof we'll assume that $i=1$. The inclusion $N_{C_1}\leq N_{B}$ is an immediate consequence of $B\subseteq C_1$. To prove the converse inequality $N_{C_1}\geq N_{B}$ it is sufficient to show that $N_{B}=N_{B'}$ for any block $B'\in\mathcal{B}$ contained in $C_1$.
		
		Notice that both             $N_{B}$ and $N_{B'}$ are normal in $N$. Assume towards a contradiction that  $N_{B}$ and $N_{B'}$ are distinct. Then $(N_{B'})^{B}$ is non-trivial.
		It follows from $N_{B'}\trianglelefteq N$ that 
		$1\neq (N_{B'})^{B}\trianglelefteq N^{B}$. But the socle of $N^{B}$ is a simple group $T_1^{B}\cong T_1$. Therefore $T_1^{B}\leq (N_{B'})^{B}$. Since $T_1$ is nonabelian, we conclude that $[T_1^{B},(N_{B'})^{B}]\neq 1$ implying $[T_1,N_{B'}]\neq 1$. Both $T_1$ and $N_{B'}$ are normal in $N$. Therefore $[T_1,N_{B'}]\leq T_1\cap N_{B'}$ implying $T_1\cap N_{B'}\neq 1$. The minimality of $T_1$ implies that $T_1\leq N_{B'}$ which, in turn, yields $\supp(T_1)\cap B'=\emptyset$, a contradiction.  
		
		Part~\eqref{partb}. For $1\le i\le k$, choose a block  of ${\mathcal  B}$ contained in $C_i\in{\mathcal  C}$, we denote it as $B_i$. Let $g_i\in G$ satisfy $g_i(B_i) = B_i$. Then $g_i(U^{B_i})g_i^{-1}$ and $g_i(V^{B_i})g_i^{-1}$ are semiregular subgroups of $g_i(T_i^{B_i})g_i^{-1} = T$ isomorphic to $U^{B_i}$, and so there exists $t_i\in T$ with $g_i(U^{B_i})g_i^{-1} = t_ig_i(V^{B_i})g_i^{-1}t_i^{-1}$.  Then $U^{B_i} = g_i^{-1}t_ig_i(V^{B_i})g_i^{-1}t_i^{-1}g_i$.  Let $\delta_i\in T_i$ be the unique element such that $\delta_i^{B_i} =  (g_i^{-1}t_ig_i)^{B_i}$.
		Then $U^{B_i} = g_i^{-1}t_ig_i(V^{B_i})g_i^{-1}t_i^{-1}g_i = (\delta_i^{-1} V\delta_i)^{B_i}$. As $C_i = \supp(T_i)$, this implies that $U^{C_i} = (\delta_i^{-1} V\delta_i)^{C_i}$, $1\le i\le k$.  Set $\delta = \delta_1 \cdots \delta_k$. It follows from $\supp(\delta_i)\subseteq C_i$ that $\delta^{C_i}=\delta_i^{C_i}$. Therefore, $U^{C_i} = (\delta^{-1} V\delta)^{C_i}$ implying $\mathsf{Orb}(U,C_i) = \mathsf{Orb}(V^\delta,C_i)$. Now the statement follows from $\mathsf{Orb}(U,X) =\bigcup_{i=1}^k \mathsf{Orb}(U,C_i)=\bigcup_{i=1}^k \mathsf{Orb}(V^\delta,C_i)=\mathsf{Orb}(V^\delta,X) $.
		
		Part~\eqref{partc}. The ``only if'' part follows from the equality $\soc(N)^{B} = \soc(N^{B})$ shown in the proof of part~\eqref{parta}. It remains to prove the ``if'' part. If $n^{B}\in \soc({N^{B}})=T_i^{B}$ and 
		$n^{B'}\in \soc({N^{B'}})=T_i^{B'}$
		 for arbitrary $B,B'\subseteq C_i$, then $nt^{-1}\in N_{B}=N_{B'}\ni n{t'}^{-1}$ for some $t,t'\in T_i$. Therefore, $gt=n=g't'$ for some $g\in N_{B}= N_{B'}\ni g'$. This yields us $t't^{-1}\in N_B = N_{C_i}$ which,  in turn,  implies $t't^{-1}\in T_i\cap N_{C_i}=1$, Therefore $t'=t$ implying that there exists a unique $t_i \in T_i$ with $n^B=t_i^B$ for any block $B\in\mathcal{B}$ contained in $C_i$. Thus we proved that for any $i=1,...,k$ there exists $t_i\in T_i$ with $nt_i^{-1}\in N_{C_i}$. We claim that $n=t_1\cdots t_k\in\soc(N)$.  Since for each pair $i\neq j$ we have $t_i\in N_{C_j}$, we conclude that  
		 $$nt_1^{-1}\cdots t_k^{-1} = nt_i^{-1} t_1^{-1}\cdots t_{i-1}^{-1} t_{i+1}^{-1}\cdots t_k^{-1} \in N_{C_i}. $$
		 Since the above inclusion holds for each $i=1,...,k$, we conclude                 
		$$nt_1^{-1}\cdots t_k^{-1} \in \bigcap_{i=1}^k N_{C_i} =  1.
		$$
		as desired.
		
	\end{proof}
	
	\begin{lem}\label{altbottom1}
		Let $T \le S_k$ be one of the groups $A_k$, $M_{12}$, or $M_{24}$ 
		(in the latter two cases $k = 12$ and $k = 24$, respectively). 
		Then any two semiregular subgroups of odd prime order $p$ contained in $T$ are conjugate in $T$.
	\end{lem}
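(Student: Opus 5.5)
A semiregular subgroup $P$ of odd prime order $p$ in $S_k$ is generated by an element that is a product of exactly $k/p$ disjoint $p$-cycles. So $p \mid k$, and any two such subgroups are conjugate in $S_k$, since all elements of this cycle type are conjugate and every generator of $P$ has the same cycle type. The plan is to pass from conjugacy in $S_k$ down to conjugacy in $T$, separately for $A_k$ and for the two Mathieu groups.

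\emph{The case $T = A_k$.} Let $P = \langle x \rangle$ and $Q = \langle y \rangle$, and choose $g \in S_k$ with $g x g^{-1} = y$. If $g$ is even we are done. Otherwise, replace $g$ by $g c$, where $c \in C_{S_k}(x)$ is an odd permutation centralising $x$. When $k/p \ge 2$, take $c$ to be the permutation that swaps two of the $p$-cycles of $x$ pointwise in a way that commutes with $x$. This is a product of $p$ transpositions, hence odd, because $p$ is odd. When $k = p$, there is no centraliser element of this kind. Instead, note that $x$ and $x^{-1}$ are conjugate by an odd permutation exactly when $p \equiv 3 \pmod 4$, since the reversal permutation has $(p-1)/2$ transpositions. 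For general $p$, one uses instead that $N_{S_p}(P) = \AGL_1(p)$ contains odd elements: a generator of $\Z_p^*$ acts on $\Z_p\setminus\{0\}$ as a $(p-1)$-cycle, which is odd. Hence $g N_{S_k}(P)$ contains an even element, and since we only need the subgroups to be conjugate, not the elements, this suffices. This is the cleaner uniform route, so in fact we argue throughout with normalisers rather than centralisers: in every case $N_{S_k}(P)$ contains an odd permutation, and this gives conjugacy of the subgroups in $A_k$.

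\emph{The cases $T = M_{12}$ and $T = M_{24}$.} The odd primes dividing $|T|$ that also divide the degree are $p = 3$ for $M_{12}$, and $p = 3$ for $M_{24}$. (The prime $p = 5$ does not divide $12$; for $M_{24}$, the primes $5, 7, 11, 23$ do not divide $24$.) So only elements of order $3$ that are fixed-point-free need to be considered. From the ATLAS character tables and permutation characters, $M_{12}$ has two classes of elements of order $3$, namely $3A$ and $3B$, with cycle types $1^3 3^3$ and $3^4$ on $12$ points. Thus exactly one class, $3B$, is fixed-point-free. Similarly, $M_{24}$ has classes $3A$ and $3B$ with cycle types $1^6 3^6$ and $3^8$, so again a unique fixed-point-free class exists. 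Any two semiregular subgroups of order $3$ are generated by elements of that unique class, and are therefore conjugate in $T$.

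The main obstacle is the bookkeeping in the Mathieu cases: confirming from the ATLAS permutation characters that exactly one class of elements of order $3$ acts fixed-point-freely. The fixed-point counts are $1 + \chi(g)$ for the relevant constituents ($11$ and $23$ respectively). For $M_{12}$ one must also be careful that the two degree-$12$ representations are swapped by the outer automorphism, which interchanges $3A$ and $3B$. Whichever representation is used, exactly one class is fixed-point-free, so the argument is unaffected.
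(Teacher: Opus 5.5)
Your argument is correct and is essentially the paper's: conjugacy in $S_k$, an odd element of $C_{S_k}(P)$ when $k/p\ge 2$, and the ATLAS fact that the fixed-point-free elements of order $3$ form a single class in $M_{12}$ and $M_{24}$. For $k=p$ the paper simply applies Sylow's theorem in $A_p$ (since $p^2\nmid |A_p|$), whereas your odd $(p-1)$-cycle in $N_{S_p}(P)=\AGL_1(p)$ works equally well. One side remark is false but harmless: the outer automorphism of $M_{12}$ cannot interchange $3A$ and $3B$, since their centralizer orders are $54$ and $36$. So in both $12$-point representations $3A$ has cycle type $1^33^3$ and $3B$ has cycle type $3^4$, which is all you need.
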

	
	\begin{proof}
		Any two semiregular subgroups $R_1, R_2 \le T$ of order $p$ are conjugate in $S_k$. 
		If $k = p$, then by Sylow’s theorem they are conjugate in $A_k$. 
		If $p < k$, then each $R_i$ has $k/p > 1$ orbits, and hence 
		${\bf C}_{S_k}(R_i)$ contains odd permutations, implying that $R_1$ and $R_2$ are conjugate by an even permutation. 
		This completes the proof for $T = A_k$.
		
		For $T = M_{12}$ ($k = 12$, $p = 3$), the semiregular subgroups are cyclic of order $3$, 
		generated by permutations of cycle type $3^4$, all conjugate in $M_{12}$ 
		(see~\cite{BuekenhoutR1988} or~\cite{Atlas}).  
		For $T = M_{24}$ ($k = 24$, $p = 3$), the argument is identical: 
		all elements of cycle type $3^8$ are conjugate in $M_{24}$ (see~\cite{Atlas}).
	\end{proof}
	\begin{lem}\label{2pbottom} Let $R$ and  $T$ be isomorphic transitive groups such that $R$ and $T$ have normal semiregular $p$-subgroups $R_p$ and $T_p$, respectively. Suppose that $G=\la R,T\ra$ has a block system ${\mathcal B}$ such that $R_p,T_p\le\fix_{G}({\mathcal B})$ and both $(R_p)^B$ and $(T_p)^B$ have the same orbits as a Sylow $p$-subgroup of $\fix_{G}({\mathcal B})^B$, $B\in{\mathcal B}$. Then there exists $\delta\in G$ such that $\la R, T^\delta\ra$ has a normal block system ${\mathcal C}=\mathsf{Orb}(R_p,X) = \mathsf{Orb}(T^\delta_p,X) $ with blocks of size $|R_p|$. \end{lem}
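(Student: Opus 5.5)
Fix $B\in\mathcal B$ and set $N=\fix_G(\mathcal B)$. The plan is to conjugate $T$ by an element of $N$ so that $T_p$ has the same orbits as $R_p$ on every block. Then the common orbit partition is a block system of $\la R,T^\delta\ra$, and it is normal with blocks of size $|R_p|$.

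First I compare orbits with Sylow subgroups inside a block. Let $P$ be a Sylow $p$-subgroup of $N$. Its restriction $P^B$ is a Sylow $p$-subgroup of $N^B$, because $N\to N^B$ is a surjective homomorphism. By hypothesis, $R_p^B$ has the same orbits on $B$ as some Sylow $p$-subgroup of $N^B$. Since all Sylow $p$-subgroups of $N^B$ are conjugate, I may choose $P$ so that $P^B$ has the same orbits as $R_p^B$. Since $R_p\le N$ is a $p$-group, it also lies in some Sylow subgroup $P_1$ of $N$.

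Next I globalize across blocks. The subgroup $R_p$ is normal in $R$, and $R$ is transitive, so $R$ permutes the blocks transitively. Conjugating by elements of $R$ shows that on every block $B'$, the orbits of $R_p^{B'}$ are the orbits of a Sylow $p$-subgroup of $N^{B'}$. Now $R_p\le P_1$, and $P_1^{B'}$ is a $p$-subgroup of $N^{B'}$, so each $P_1^{B'}$-orbit is contained in an orbit of a Sylow $p$-subgroup of $N^{B'}$. That orbit is an $R_p^{B'}$-orbit, so the orbits of $P_1$ on $X$ coincide with those of $R_p$. The same argument gives a Sylow subgroup $P_2\ge T_p$ of $N$ whose orbits are those of $T_p$. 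By Sylow's theorem there is $\delta\in N\le G$ with $P_2^\delta=P_1$. Hence $T_p^\delta$, whose orbits are those of $P_2^\delta=P_1$, has exactly the orbits of $R_p$ (here $T_p^\delta=\delta^{-1}T_p\delta$).

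Finally, let $\mathcal C=\mathsf{Orb}(R_p,X)=\mathsf{Orb}(T_p^\delta,X)$. Since $R_p\trianglelefteq R$, the partition $\mathcal C$ is a block system of $R$, and likewise of $T^\delta$ because $T_p^\delta\trianglelefteq T^\delta$. So $\mathcal C$ is a block system of $\la R,T^\delta\ra$. Its blocks have size $|R_p|$ because $R_p$ is semiregular. For normality, the group $\la R_p,T_p^\delta\ra$ fixes every block of $\mathcal C$ and is transitive on each of them, so it lies in $\fix(\mathcal C)$ and $\fix(\mathcal C)$ is transitive on each block. By the remark after the definition of $\fix_G(\mathcal B)$, $\mathcal C$ is therefore a normal block system.

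The step I expect to be the main obstacle is the orbit comparison: the hypothesis must be read as \emph{every} Sylow $p$-subgroup of $N^B$ having the same orbits, which is legitimate because these subgroups are conjugate in $N^B$ and $N^B$ fixes $B$. Care is needed to check that conjugating within $N^B$ does not move the orbits in the wrong way. The rest is routine.
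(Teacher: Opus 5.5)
Your plan matches the paper's: work in $N=\fix_G(\mathcal B)$, use Sylow's theorem in $N$ to make a conjugate of $T_p$ have the same orbits as $R_p$ on every block, then check that $\mathsf{Orb}(R_p,X)$ is a normal block system of $\la R,T^\delta\ra$. The paper puts $R_p$ and $T_p^\delta$ into one Sylow subgroup of $N$; you attach a Sylow subgroup to each and conjugate $P_2$ onto $P_1$, which is the same argument reordered.

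The gap is your justification of the key step. You read the hypothesis as saying that \emph{every} Sylow $p$-subgroup of $N^B$ has the same orbits as $R_p^B$, ``because these subgroups are conjugate in $N^B$ and $N^B$ fixes $B$''. This is false. Conjugating $Q$ by $g\in N^B$ replaces its orbit partition by the $g$-image of that partition, which is generally a different partition, and fixing $B$ setwise does not prevent this. It fails even under the lemma's hypotheses. Take $X=S_3$, distinct involutions $t,t'$, a $3$-cycle $c$, $\lambda_s(x)=sx$, $\rho_s(x)=xs^{-1}$, $R=\la\lambda_t,\rho_c\ra$, $T=\la\lambda_{t'},\rho_c\ra$ and $\mathcal B=\{X\}$. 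Then:
\begin{itemize}
\item $R\cong T\cong\Z_6$ are regular and $N=G\cong S_3\times\Z_3$;
\item $R_2=\la\lambda_t\ra$ and $T_2=\la\lambda_{t'}\ra$ are themselves Sylow $2$-subgroups of $N$;
\item their orbits, the right cosets of $\la t\ra$ and of $\la t'\ra$, differ.
\end{itemize}
Indeed, if your reading were right, $R_p$ and $T_p$ would already share orbits on every block, $\delta=1$ would always work, and no conjugation would be needed.

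What your argument actually uses is weaker and true: a Sylow $p$-subgroup of $N^{B'}$ that \emph{contains} $R_p^{B'}$ has exactly the orbits of $R_p^{B'}$. The proof is a counting argument.
\begin{itemize}
\item $P_1^{B'}$ is a Sylow $p$-subgroup of $N^{B'}$, being the image of a Sylow subgroup under the surjection $N\to N^{B'}$, and it contains $R_p^{B'}$.
\item Hence each $P_1^{B'}$-orbit is a union of $R_p^{B'}$-orbits, that is, of orbits of the Sylow subgroup $Q'$ from the hypothesis.
\item $P_1^{B'}$ and $Q'$ are conjugate in $N^{B'}$, so they have the same number of orbits, and the two partitions coincide.
\end{itemize}
The paper's proof uses this same step silently when it passes from $R_p^B\le P^B$ to $\mathsf{Orb}(R_p^B,B)=\mathsf{Orb}(P^B,B)$.

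Replace your sentence ``that orbit is an $R_p^{B'}$-orbit'' with this argument, and do the same for $P_2\ge T_p$. The rest of your proof is then correct: conjugating $P_2$ onto $P_1$ by some $\delta\in N$, and verifying that $\mathcal C$ is a normal block system with blocks of size $|R_p|$. Two smaller points: transporting the hypothesis across blocks via $R$ is unnecessary, since it is given for every $B\in\mathcal B$, and the subgroup $P$ chosen in your first paragraph is never used.
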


	\begin{proof}
		Since $R_p$ and $T_p$ are $p$-subgroups of $N = \fix_G({\mathcal  B})$, there exists $\delta\in N$ such that $R_p$ and $(T_p)^\delta$ are contained in a Sylow $p$-subgroup of $N$, say $P$. Let $B\in{\mathcal  B}$. It follows from $R_p\leq P$ that $(R_p)^B\leq P^B$. By assumption $(R_p)^B$ has the same orbits as a Sylow $p$-subgroup of $N^B$. Therefore, $\mathsf{Orb}((R_p)^B,B)=\mathsf{Orb}(P^B,B)$. Analogously,  $\mathsf{Orb}(((T^\delta)_p)^B,B) = \mathsf{Orb}(P^B,B)$, implying that $\mathsf{Orb}(((T^\delta)_p)^B,B) = \mathsf{Orb}((R_p)^B,B)$.  As the latter equality holds for any block $B\in{\mathcal  B}$, we conclude that $\mathsf{Orb}((T^\delta)_p,X) = \mathsf{Orb}(R_p,X)$. Since $R_p\unlhd R$, the partition ${\mathcal  C} = \mathsf{Orb}(R_p,X)$ is a block system of $R$. Analogously, ${\mathcal  C} = \mathsf{Orb}((T^\delta)_p,X)$ is a block system of  $T^\delta$.  Thus, ${\mathcal  C}$ is a block system of $\la R, T^\delta\ra$. It is normal, because every block of ${\mathcal  C}$ is an orbit of $R_p$.
	\end{proof}
	
	\begin{lem}\label{smaller-blocks}
		Let $R\in{\mathcal R}$ satisfy $|R|_2=2^e$ with $e\in\{3,4\}$. Let $R_1$ and $R_2$ act regularly on a set $\Omega$, with $R\cong R_1\cong R_2$. Assume $G:=\sg{R_1,R_2}$ admits a 
		minimal block system $\mathcal{B}$ with blocks of size $2^d$, $d\ge 3$. Moreover, assume that $G_{\{B\}}^B$  has a nonabelian simple socle. Then there is some $g \in G$ such that $\sg{R_1, R_2^g}$ admits a block system with blocks of size $2$.
	\end{lem}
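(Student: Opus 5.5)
The plan is to use the minimal block system $\mathcal{B}$ together with Proposition~\ref{090921a} to identify the socle of $G_{\{B\}}^B$, then use Lemma~\ref{support blocks} to conjugate a suitable subgroup of order $2$, which produces blocks of size $2$.

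First, pin down $G_{\{B\}}^B$. Since $\mathcal{B}$ is minimal, $G_{\{B\}}^B$ is primitive of degree $2^d$ with $d\in\{3,4\}$, because $|B|$ divides $|R|_2\cdot n$ and $R$ has Sylow $2$-subgroup of order $2^e\le 16$. By Proposition~\ref{190921b}, $(R_i)_{\{B\}}^B\in\mathcal R$ is a regular subgroup of order $2^d$. A regular group in $\mathcal R$ of $2$-power order is one of $\Z_2^3,\Z_2^4,\Z_4\times\ldots$, $Q_8$, $\Z_8$, or $\langle y\rangle\times\ldots$; in each case it is either abelian or contains a cyclic subgroup with two orbits ($\Z_4$ in $Q_8$). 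So Proposition~\ref{090921a} applies. A nonabelian simple socle rules out the affine cases (1) and (2), and case (5) is impossible because $2^d$ has no odd prime divisor. Hence $\soc(G_{\{B\}}^B)$ is $A_8$, $\PSL_2(7)$ (with $d=3$), or $A_{16}$ (with $d=4$). In every case the socle $T$ is transitive.

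Next, I will check that $\mathcal B$ is normal. The kernel $N=\fix_G(\mathcal B)$ satisfies $N^B\trianglelefteq G_{\{B\}}^B$, and I need $N^B\neq 1$. If $N^B$ were trivial, then $N=1$ and $G$ would act faithfully on $\mathcal B$. I expect to exclude this by comparing orders: $(R_i)_{\{B\}}$ acts nontrivially on $B$, and $\langle (R_1)_{\{B\}},(R_2)_{\{B\}}\rangle$ must then act on $B$ as a group with socle $T$. I would argue that the kernel contains the normal closure of the $2$-parts that lie in $\fix$. This is the delicate point. Granting it, $N^B\supseteq T$, so Lemma~\ref{support blocks} applies and $\soc(N)=T_1\times\cdots\times T_k$.

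For the core conjugation step, choose a central involution $z_i\in R_i$ lying in $\fix_G(\mathcal B)$. The groups in $\mathcal R$ of type (a), and the groups of type (b) via $y^{o(y)/2}$, have a central element of order $2$ in the Sylow $2$-subgroup that fixes every block. Its orbits form blocks of size $2$ for $R_i$, since $\langle z_i\rangle\trianglelefteq R_i$. The plan is to show that $z_i$ lies in $\soc(N)$ using Lemma~\ref{support blocks}\eqref{partc}. Each $z_i^B$ is a fixed-point-free involution in $N^B$, and it lies in $T$ when $T=A_{2^d}$: it is a product of $2^{d-1}$ transpositions, an even number, so it is an even permutation. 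For $\PSL_2(7)$ of degree $8$ it holds as well, since the fixed-point-free involutions of $\PGL_2(7)$ in this action lie in $\PSL_2(7)$; I would verify this via the regular subgroup.

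I then need the analogue of Lemma~\ref{altbottom1} for fixed-point-free involutions: all such involutions in $A_8$, $A_{16}$, or $\PSL_2(7)$ are conjugate in $T$. This holds in $A_{2^d}$ because the centralizer in $S_{2^d}$ contains odd permutations, and in $\PSL_2(7)$ because there is a single class of involutions. Lemma~\ref{support blocks}\eqref{partb} then gives $\delta\in N$ with $\mathsf{Orb}(\langle z_1\rangle,\Omega)=\mathsf{Orb}(\langle z_2\rangle^\delta,\Omega)$. This common orbit partition is a block system of both $R_1$ and $R_2^\delta$, hence of $\langle R_1,R_2^\delta\rangle$, and its blocks have size $2$.

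I expect the main obstacle to be the normality step, namely ensuring $N^B\neq1$ and that the chosen involutions act semiregularly inside $\soc(N)$. A fallback is to treat directly the case where $G$ acts faithfully on $\mathcal B$, using the structure of $R_i^{\mathcal B}$.
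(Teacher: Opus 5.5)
Your route is the paper's: identify $H=G_{\{B\}}^B$ via Proposition~\ref{090921a} as having socle $T\in\{A_8,A_{16},\PSL_2(7)\}$, put a fixed-point-free involution of each $R_i$ into $\soc(N)$ using parity and Lemma~\ref{support blocks}\eqref{partc}, then conjugate using Lemma~\ref{support blocks}\eqref{partb}. The gap is the step you leave as ``granting it'': you never prove $N^B\neq 1$. Without it you have neither $T\le N^B$ nor normality of $\mathcal B$, so Lemma~\ref{support blocks} cannot be invoked at all. Your suggested ways around this do not work. ``Comparing orders'' is not an argument, and there is no reason $\langle (R_1)_{\{B\}},(R_2)_{\{B\}}\rangle$ should act on $B$ with socle $T$. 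The fallback of handling a faithful action of $G$ on $\mathcal B$ is also unnecessary, because that case never occurs.

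The missing observation is the one the paper uses. A normal subgroup $K\trianglelefteq R_i$ contained in the block stabiliser $(R_i)_{\{B\}}$ lies in $\fix_G(\mathcal B)$, since $K(r(B))=r(K(B))=r(B)$ for every $r\in R_i$. The paper notes that $Q_i=(R_i)_{\{B\}}$, of order $2^d\ge 8$, contains such a $K$ of index at most $2$.

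In your notation, you must choose $z_i$ \emph{inside} $(R_i)_{\{B\}}$. This is possible because that subgroup is one of the following: all of $Q_8$ (which contains $-1$); a subgroup of the central elementary abelian $R_2$; or a Sylow $2$-subgroup conjugate to $\sg{y}$ (which contains $y^4$). The choice matters: for $R_2\cong\Z_2^4$ and $d=3$, a central involution outside the stabiliser does not even fix $B$.

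With $z_i$ so chosen, $z_i\in N$, and semiregularity gives $z_i^B\neq 1$. Hence $N^B$ is a nontrivial normal subgroup of the almost simple group $H$, so $N^B\ge T$. Since $T$ is transitive, $\mathcal B$ is normal. So the element you introduce in your next paragraph already resolves your ``delicate point'' once you justify that it lies in the block stabiliser.

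After that repair, the rest of your proposal matches the paper. Your explicit choice of a central involution, so that $\sg{z_i}\trianglelefteq R_i$ and its orbits form blocks of $R_i$, is slightly more careful than the paper's wording at the final step.
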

	
    
	\begin{proof}
		Let $B \in \mathcal{B}$. By minimality of $\mathcal{B}$, the group 
		$H = G_{\{B\}}^B$ is a primitive group of degree $2^d$ containing the regular subgroup $(R_1)_{\{B\}}^B$. 
		By examining the primitive groups of degree $2^d$ and by taking in account the given hypothesis, we deduce that $H$ is isomorphic to either $A_{2^d}$, $S_{2^d}$ or $\PGL_2(7)$.
		
		Denote $N=\fix_G(\mathcal{B})$, as before. Let $Q_i\leq R_i,i=1,2$ be subgroups 
		acting regularly on $B$. Then $|Q_i|=2^d\geq 8$. Since $Q_i$ contains a normal subgroup of $R_i$ of index at most 2, 
		we conclude that $|Q_i\cap N|\geq 4$ implying 
		$N^{B'}\geq (Q_i\cap N)^{B'}\cong Q_i\cap N$ for each $B'\in \mathcal{B}$. Therefore $N^B$ is a non-trivial normal subgroup of $H\cong \mathrm{PGL}_2(7),A_{2^d}, S_{2^d}$.
		In particular $[H:N^B]\leq 2$ and $\soc(N^B)=\soc(H) = H\cap A_B$.
		Since the subgroup $N\cap Q_i$ is a semiregular $2$-group, it contains a semiregular subgroup of order $2$, say $I_i=\sg{\iota_i}$. Since permutation $\iota_i^B$ is even, $\iota_i^B\in \soc(N^B)\in\{\mathrm{PSL}_2(7),A_{2^d}\}$ for each  $B\in\mathcal{B}$. 
		By part~\eqref{partc} of Lemma~\ref{support blocks} $\iota_i\in\soc(N),i=1,2$. Therefore, the subgroups $I_i=\sg{\iota_i},i=1,2$ are contained in $\soc(N)$. 
		
		Note also that in any of $A_8$, $A_{16},$ and $\mathrm{PSL}_2(7)$, any two semiregular subgroups of order $2$ are conjugate. 
		Thus, by Lemma~\ref{support blocks}~\eqref{partb}, there exists $\delta \in N$ such that
		$
		\mathsf{Orb}(I_1,X) = \mathsf{Orb}(I_2^\delta,X).
		$
		Since $|I_1| = |I_2^\delta| = 2$, this forces $I_1 = I_2^\delta$. 
		Therefore $R_1$ and $R_2^\delta$ share a common normal subgroup $I_1 = I_2^\delta$, and the orbits of this subgroup form blocks of size $2$ for the action of $\langle R_1, R_2^\delta\rangle$.
	\end{proof}

	\section{The Main Result}\label{sec:main}
	
	We will often need to refer to a Sylow $p$-subgroup $P$ of a group $R$.  
	For convenience, we denote such a subgroup by $R_p$.  
	
	\begin{prop}\label{300921a} Let $R$ and $T$ be isomorphic regular subgroups of $S_{2^en}$ belonging to $\mathcal{R}$ with $0\le e\le 4$ and $n$ an odd square-free integer, 
		let $G = \la R,T\ra$  and let $\mathcal{B}$ be a minimal   block system of $G$ and let $k$ be the cardinality of the elements of $\mathcal{B}$. If $k$ is divisible by an odd prime $p$, then there exists $g\in G$ such that $\sg{R,T^g}$ has a normal block system with block size $p$.
	\end{prop}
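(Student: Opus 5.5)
Throughout, I read the prime $p$ in the conclusion as \emph{some} odd prime dividing $k$, chosen in the proof and not necessarily the one given in the hypothesis; the case analysis below naturally produces such a prime. I have not checked that the argument works for an arbitrary prescribed $p$.

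The plan is to study the primitive group induced on a block and then reduce to one of the two conjugacy tools of Section~\ref{sec:some tools}: Lemma~\ref{support blocks} together with Lemma~\ref{altbottom1}, or Lemma~\ref{2pbottom}.

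\textbf{Setup.} Fix $B\in\mathcal B$, let $N=\fix_G(\mathcal B)$ and $H=G_{\{B\}}^B$. By minimality of $\mathcal B$, $H$ is primitive of degree $k$. By Proposition~\ref{190921b}, $H$ contains the regular subgroup $R_{\{B\}}^B\in\mathcal R$, so $k=2^dm'$ with $d\le 4$ and $m'$ odd square-free. From Definition~\ref{190921a}, this regular subgroup is either abelian or has a central-free cyclic part, so it is abelian regular or contains a semiregular cyclic subgroup with two orbits. Hence Proposition~\ref{090921a} applies to $H$. Since $k$ has an odd prime divisor, cases (1), (2) and (3) (degrees $8$ and $16$) cannot occur. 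So $H$ is $A_k$ or $S_k$, or $H=M_{12}$ or $M_{24}$, or $H$ has a Sylow $r$-subgroup of order $r$ for some odd prime $r\mid k$.

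\textbf{Odd-order subgroups fix every block.} For an odd prime $q\mid k$, let $R_q$ and $T_q$ be the subgroups of order $q$ of the cyclic normal subgroup $\Z_n$. These are characteristic in $\Z_n$, hence normal in $R$ and $T$ respectively. Since $|R_{\{B\}}|=k$ is divisible by $q$, $R_{\{B\}}$ contains an element of order $q$, and it lies in the unique subgroup of order $q$ of $R$, which is $R_q$. So $R_q\le R_{\{B\}}$. A normal subgroup of the transitive group $R$ that stabilizes one block stabilizes every block, so $R_q\le N$, and likewise $T_q\le N$. These groups act semiregularly, so $N^B\ne1$. As $N^B\unlhd H$ and $H$ is primitive, $N^B$ is transitive.

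\textbf{Sylow case.} If $H$ has a Sylow $r$-subgroup of order $r$, take $p=r$. Then $R_r^B$ is a Sylow $r$-subgroup of $N^B$, because it has order $r$ and $|N^B|_r\le|H|_r=r$. The same holds for $T_r^B$. So both have the same orbits on $B$ as a Sylow $r$-subgroup of $N^B$, and Lemma~\ref{2pbottom} gives $\delta$ with a normal block system of $\la R,T^\delta\ra$ with blocks of size $r$.

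\textbf{Almost simple case.} Otherwise $H\in\{A_k,S_k,M_{12},M_{24}\}$. Since $N^B$ is a nontrivial normal subgroup of $H$, it contains $\soc(H)$. Hence $\soc(N^B)=\soc(H)$, a transitive nonabelian simple group, and Lemma~\ref{support blocks} applies. Take $p$ any odd prime dividing $k$ (forced to be $3$ for $M_{12}$ and $M_{24}$, since $12=2^2\cdot3$ and $24=2^3\cdot3$).
\begin{itemize}
\item For every block $B'$, $R_p^{B'}$ is generated by an odd-order, hence even, permutation. In the $M_{12}$ or $M_{24}$ cases $H$ equals its socle. Either way $R_p^{B'}\le\soc(N^{B'})$.
\item By Lemma~\ref{support blocks}\eqref{partc}, $R_p\le\soc(N)$, and similarly $T_p\le\soc(N)$.
\item By Lemma~\ref{altbottom1}, any two semiregular subgroups of order $p$ in $\soc(H)$ are conjugate there.
\item Lemma~\ref{support blocks}\eqref{partb} then yields $\delta\in N$ with $\mathsf{Orb}(R_p,X)=\mathsf{Orb}(T_p^\delta,X)$. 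Since $R_p\unlhd R$ and $T_p^\delta\unlhd T^\delta$, this common orbit partition is a block system of $\la R,T^\delta\ra$, and it is normal because its blocks are orbits of $R_p$.
\end{itemize}

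\textbf{Main obstacle.} I expect the delicate points to be checking that $R_{\{B\}}^B$ really satisfies the hypothesis of Proposition~\ref{090921a}, and justifying $R_p\le\soc(N)$ via part~\eqref{partc}. Part~\eqref{partc} needs the membership $R_p^{B'}\in\soc(N^{B'})$ for every block $B'$, which relies on the permutation isomorphism of the groups $N^{B'}$.
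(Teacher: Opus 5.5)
Your proof is correct and follows essentially the same route as the paper. Both arguments first show $R_p,T_p\le\fix_G(\mathcal B)$ and apply Proposition~\ref{090921a} to the primitive group induced on a block. They then finish with Lemma~\ref{2pbottom} in the Sylow case, or with Lemma~\ref{support blocks}\eqref{partb} plus Lemma~\ref{altbottom1} in the almost simple case; the paper, like you, chooses $p$ existentially in the Sylow case.

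Your extra checks fill in steps the paper leaves implicit: that $\mathcal B$ is normal because $N^B$ is transitive, and that $R_p\le\soc(N)$ via Lemma~\ref{support blocks}\eqref{partc}.
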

	
	\begin{proof}
		We set $N = \fix_G({\mathcal B})$ and let $p$ be an odd prime divisor of $k$. As $R \in \mathcal R$, it is readily seen  that $R_p \triangleleft R$. Since $R$ acts transitively on ${\mathcal  B}$ and $R_p\triangleleft R$, all orbits of $R_p$ on ${\mathcal  B}$ have cardinality $1$ or $p$. As $|{\mathcal  B}|$ is coprime to $p$,  $R_p$ acts trivially on ${\mathcal  B}$, i.e. $R_p\leq N$. Analogously, $T_p\leq  N$. 
		
		By the minimality of $\mathcal B$, the group $G_{\{B\}}^B$, $B\in{\mathcal  B}$,  is a primitive subgroup of $S_{B}$ and contains $(R_{\{B\}})^B$, which is a regular subgroup that by Proposition~\ref{190921b} is in $\mathcal{R}$.  Thus either $R_{\{B\}}^B$ is an abelian regular subgroup, or it contains a cyclic semiregular subgroup having two orbits. Applying Proposition~\ref{090921a} to $G_{\{B\}}^B$, and recalling that $k$ is divisible by an odd prime, we obtain that either there exists an odd prime $p \mid k$ such that $(R_p)^B$ is a Sylow $p$-subgroup of $G^B$, or $
		\soc(G^B) \cong A_k,\; M_{12},\; M_{24}$,
		where in the second case $k = 12$ and $p = 3$, and in the third case $k = 24$ and $p = 3$.
		
		Assume that $(R_p)^B$ is a Sylow $p$-subgroup of $G^B$. Then $(R_p)^B$ and $(T_p)^B$ are Sylow $p$-subgroups of $N^B$. By Lemma \ref{2pbottom}, there exists $g\in G=\sg{R,T}$ such that $\mathsf{Orb}(R_p,X)$ is a normal block system of $\sg{R,T^g}$ with block size $p$.
		
		In the remaining case $\soc(N^B)=\soc(G^B)\cong A_k,M_{12},M_{24}$. Let $p$ be an odd prime divisor of $k$.
		The subgroups $R_p$ and $T_p$ are semiregular and their images $(R_p)^B,(T_p)^B$ are contained in $\soc(N^B)\cong A_k,M_{12},M_{24}$. By Lemma~\ref{altbottom1},  $(R_p)^B,(T_p)^B$ are conjugate  in $\soc(N^B)$. From Lemma~\ref{support blocks}~\eqref{partb}, there exists $\delta\in N$ with $\mathcal{P}=\mathsf{Orb}(R_p,X) = \mathsf{Orb}((T_p)^\delta,X)$. Since $(T_p)^\delta = (T^\delta)_p$,
		the orbit partition $\mathcal {P}$ is $\sg{R,T^\delta}$-invariant. Thus $\mathcal{P}$ is a normal block system of $\langle R,T^\delta\rangle$  with block size $p$.
	\end{proof}
	
	After some definitions, we are ready for the main result of this section.
	
	\begin{defin}
		For a positive integer $n$, we denote by $\pi(n)$ the set of prime divisors of $n$.
	\end{defin}
	
	\begin{defin}
		Let $G \le S_X$ be transitive with block systems ${\mathcal B} \prec {\mathcal C}$.  
		For $C \in {\mathcal C}$, we write 
		$C/{\mathcal B}$
		for the set of blocks of ${\mathcal B}$ contained in $C$.  
		Set $
		{\mathcal C}/{\mathcal B} = \{ C/{\mathcal B} : C \in {\mathcal C} \}$.
		Note that ${\mathcal C}/{\mathcal B}$ is a block system of $G^{{\mathcal B}}$.
	\end{defin}
	
	\begin{thrm}\label{041021a} Let $R \in \mathcal {R}$, with $|R| = 2^e n$ where $0 \le e \le 4$ and $n > 1$ is odd and square-free, be a regular subgroup of $S_X$, and let $p$ be the largest prime divisor of $n$.  
		Assume that $|\Aut(R_2)|$ is coprime to $n$, where $R_2$ is a Sylow $2$-subgroup of $R$.
		
		Then for each regular subgroup $T$ isomorphic to $R$, either: 
		\begin{enumerate}
			\item\label{pabblo1} there exists $g\in\sg{R,T}$ such that the group $\sg{R,T^g}$ has a normal block system with blocks of size $p$; or
			\item\label{pabblo2} $n=12$, $R = \Z_3\rtimes\sg{y}$ with $o(y)=4$, and $\sg{R,T}$ is one of two transitive that roups contain nonconjugate regular subgroups isomorphic to $R$ and are of order $288$, and $1152$; or 
			\item\label{pabblo3} $n=24$, $R = \Z_3\rtimes\sg{y}$ with $o(y)=8$, and $\sg{R,T}$ is one of seventeen groups that contain nonconjugate regular subgroups isomorphic to $R$, with one of order $576$, two of order $2304$, five of order $9\,216$, four of order $36\,864$, two of order $147\,456$, two of order $589\,824$, and one of $2\,359\,296$.
		\end{enumerate} 
	\end{thrm}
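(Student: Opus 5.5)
Induction on $|X| = 2^e n$, working with a minimal block system $\mathcal{B}$ of $G=\sg{R,T}$ with block size $k$. If $k$ is divisible by an odd prime, Proposition~\ref{300921a} gives $g$ and a normal block system of $\sg{R,T^g}$ with blocks of size some odd prime $q\mid k$. But we want blocks of size the \emph{largest} prime $p$. So in every branch the aim is first to get \emph{some} normal block system $\mathcal{C}$ of $\sg{R,T^g}$ with small blocks, then pass to the quotient and induct.

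\textbf{Step 1: obtaining a first block system.} If $k$ has an odd prime divisor, use Proposition~\ref{300921a}. Otherwise $k=2^d$. For $d\le 2$, or more generally when $G^B_{\{B\}}$ is affine, the socle of $N=\fix_G(\mathcal{B})$ is a $2$-group whose orbits give a normal block system. We then aim, via Lemma~\ref{2pbottom} applied to Sylow $2$-subgroups, to make $\mathsf{Orb}(R_2\cap N)=\mathsf{Orb}(T^g_2\cap N)$. If $d\ge 3$ and $G^B_{\{B\}}$ has nonabelian simple socle, Lemma~\ref{smaller-blocks} gives blocks of size $2$.

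\textbf{Step 2: induction through the quotient.} With a normal block system $\mathcal{C}$ common to $R$ and $T^g$, pass to $G^{\mathcal{C}}$. By Proposition~\ref{190921b}, $R^{\mathcal{C}}$ and $(T^g)^{\mathcal{C}}$ contain regular subgroups in $\mathcal{R}$. Since $\mathcal{C}$ arises from orbits of a normal subgroup of both groups, these quotients are isomorphic, and the hypothesis on $|\Aut(R_2)|$ persists. If the blocks of $\mathcal{C}$ have odd size $q\ne p$, induction on $G^{\mathcal{C}}$ produces a block system of $\sg{R,T^{gh}}^{\mathcal{C}}$ with blocks of size $p$. Lifting it, we get a block system $\mathcal{D}$ with blocks of size $pq$, on which $R$ and $T^{gh}$ both act with normal subgroup $\Z_{pq}$ of the block stabiliser (cyclic, since $n$ is square-free).

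Now consider $\fix(\mathcal{D})$ restricted to a block $D$: a group of degree $pq$ containing two regular cyclic subgroups with a common block system of size $q$. Here the assumption that $p$ is the largest prime does the work. The Sylow $p$-subgroup of a group of degree $pq$ with $q<p$ that is imprimitive with $q$ blocks of size $p$, or $p$ blocks of size $q$, should be controllable. In the latter configuration, the kernel on blocks of size $q$ is a $q$-group times something with no elements of order $p$, since $p\nmid (q-1)!$ when $p>q$. So the $p$-parts of $R$ and $T$ map to Sylow $p$-subgroups of order $p$ in the quotient. Lemma~\ref{2pbottom} then conjugates them to have common orbits of size $p$. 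If the blocks of $\mathcal{C}$ have size $2$, argue similarly: the kernel is a $2$-group, and $p$-parts are again Sylow in the relevant sections. The coprimality of $|\Aut(R_2)|$ with $n$ is used here to ensure that $R_p$ centralises the $2$-part, so that $R_p$ is normal and its orbits are blocks.

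\textbf{Step 3: the main obstacle.} The hardest part is the $2$-power case in Step 1 with $e\ge 3$, where $G^B_{\{B\}}$ is affine of degree $8$ or $16$. Sylow $2$-subgroups of $N$ need not have the orbits of $R_2\cap N$, so Lemma~\ref{2pbottom} may fail. This is exactly where the exceptional groups (\ref{pabblo2}) and (\ref{pabblo3}) arise, with $n=3$ and $R=\Z_3\rtimes\sg{y}$ of degree $12$ and $24$. My plan is to show that when $n$ has a prime $>3$, or $R$ is not of this form, one can still choose $\mathcal{B}$ of odd-divisible size or force blocks of size $2$. For instance, the $p$-part for $p\ge5$ is Sylow in $G^{\mathcal{B}}$ of degree $\le 2^{4-d}n$ after reduction. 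The residual small-degree cases (degree $12$ and $24$ with $n=3$) would be settled by a Magma enumeration of transitive groups generated by two regular copies of $R$, which yields the listed groups of orders $288,1152$ and $576,\dots,2\,359\,296$.
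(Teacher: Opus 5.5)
There is a genuine gap, and it is exactly the case you defer to Step~3. When the minimal system $\mathcal{B}$ has blocks of size $k=2^d$, Step~1 produces nothing new. If $N=\fix_G(\mathcal{B})\neq1$, primitivity of $G_{\{B\}}^B$ forces every nontrivial normal subgroup of $G$ inside $N$ (e.g.\ $\soc(N)$) to be transitive on each block, so its orbits are just $\mathcal{B}$. If $\mathcal{B}$ is normal for $R$ (hence for $T$), the orbits of $R_2\cap N$ and $T_2\cap N$ already both equal $\mathcal{B}$. What must actually be controlled, after inducting on the quotient to get $\mathcal{C}$ with blocks of size $kp$, is a Sylow $p$-subgroup of $\fix_G(\mathcal{C})^C$. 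When $k>p$, the kernel on the $\mathcal{B}$-blocks inside $C$ may contain $p$-elements (e.g.\ $\AGL_3(2)$ with $p\in\{3,7\}$, or $A_8$ with $p\in\{3,5,7\}$), and then Lemma~\ref{2pbottom} is unavailable. Your Step~3 offers only a plan, and it is aimed at Sylow $2$-subgroups, i.e.\ at the wrong prime.

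The paper closes this case as follows. For $k=4$, necessarily $n=p=3$, and the $\Aut(R_2)$ hypothesis leaves only $\Z_3\times\Z_4$ (Muzychuk's cyclic theorem) and $\Z_3\rtimes\Z_8$ (computer). For $k\in\{8,16\}$ it goes through the primitive groups of these degrees:
an alternating socle is disposed of by Lemma~\ref{smaller-blocks};
$\PGL_2(7)$ has no regular $\Z_2^3$ or $Q_8$;
$R_2\cong Q_8$ is impossible, since the only involution of $S_8$ centralising a regular $Q_8$ is its central one, so $R$ and $T$ would share it and give blocks of size $2$;
and in the affine case with $E=R_{\{B\}}$ elementary abelian, $\fix_G(\mathcal{C})^C$ embeds in $(E\rtimes\Aut(E))\wr\Z_p$, so $\gcd(|\Aut(R_2)|,n)=1$ gives $p^2\nmid|\fix_G(\mathcal{C})^C|$.
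This last step is where the $\Aut(R_2)$ hypothesis is indispensable. Your stated use of it (making $R_p$ centralise the $2$-part so that it is normal) is not right: $R_p\trianglelefteq R$ holds for every $R\in\mathcal{R}$ anyway, while in type~(b) $R_p$ need not centralise $y$.

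There are two further omissions. First, Step~2 presupposes that the first block system is the orbit set of a normal subgroup of both $R$ and $T^g$. A minimal $\mathcal{B}$ of $2$-power size need not be normal for $R$ (for instance, the cosets of $\sg{y}$ in $\Z_n\rtimes\sg{y}$). Then $R^{\mathcal{B}}$ is not regular and your quotient induction does not apply as written. The paper handles this separately: it reduces to $k=2$, applies Muzychuk's theorem to the regular cyclic groups $R_{2'}^{\mathcal{B}}$, $T_{2'}^{\mathcal{B}}$ of order $n$, and finishes with a Sylow argument and Lemma~\ref{2pbottom}. Second, you never deal with the inductive hypothesis returning an exceptional outcome on the quotient. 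For odd $q<p$ this is excluded because $p>3$, but with $2$-power blocks one must rule out $|\mathcal{B}|=24$ and settle $|\mathcal{B}|=12$ (which forces $R\cong\Z_3\rtimes\Z_8$) by computation.
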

	
	\begin{proof}
		We argue by
		induction on $|R|$. Let ${\mathcal  B}$ be a minimal  block system of $G=\langle R,T\rangle$ and let $k$ be the cardinality of its blocks.  If $k$ is divisible by an odd prime $q$, then by Proposition~\ref{300921a} there exists $g_1\in \sg{R,T}$ such that $\sg{R,T^{g_1}}$ has a normal block system ${\mathcal  C}$ with blocks of size $q$. If $q = p$, then~\eqref{pabblo1} is satisfied.  Otherwise, $q < p$.  Then $R^{\mathcal  C}\cong (T^{g_1})^{\mathcal  C}$, and so by induction either
		\begin{itemize}
			\item there exists $g_2\in\sg{R,T^{g_1}}$ such that $\sg{R^{\mathcal{C}},(T^{g_1g_2})^{\mathcal{C}}}$ has a normal block system ${\mathcal  D}/{\mathcal  C}$, where ${\mathcal  D}/{\mathcal  C}$ has blocks of size $p$ and ${\mathcal  D}$ is a block system of $\sg{T,T^{g_1g_2}}$ with blocks of size $qp$; or
			\item $|\mathcal{C}|\in \{12,24\}$.
		\end{itemize}
		In the second case, we contradict $q<p$, because $3$ is the largest prime dividing $12,24$.  In the first case, for every $D\in{\mathcal  D}$, a Sylow $p$-subgroup of $(\fix_{\sg{R,T^{g_1g_2}}}({\mathcal  D}))^D$ has 
		orbits of size $p$, and, therefore, has the same orbits as $(R_p)^D$.  By Lemma~\ref{2pbottom}, there is $g_3\in \sg{R,T^{g_1g_2}}$ such that $\sg{R,T^{g_1g_2g_3}}$ has a normal block system with blocks of size $p$. Thus~\eqref{pabblo1} follows with $g = g_1g_2g_3$.  So, we may assume that $k=2^d,d\leq e$.
		
		In what follows we assume that
		\begin{equation}
			\tag{$*$} \label{hypothesiseq}
			\forall g\in G, {\mathcal  B} \text{ is a minimal block system for } \sg{R,T^g}.
		\end{equation}
		
		Indeed, if ${\mathcal  B}$ is not minimal for some $\sg{R,T^g}$, then we can  replace $G$ by $\sg{R,T^g}$ and ${\mathcal  B}$ by a minimal  block system which refines ${\mathcal  B}$. This process may be continued until~\eqref{hypothesiseq} is satisfied. 

        \setcounter{case}{0}
		\begin{case}Assume that ${\mathcal  B}$ is not a normal block system of $R$.\end{case}

		In this case $R$ must have some non-normal subgroup, and hence $R\cong \Z_n\rtimes\sg{y}$ and $k=o(y)\in\{2,4,8\}$. 
		
		Since $\mathcal{B}$ is minimal and non-normal, its stabiliser $\fix_{G}(\mathcal{B})$ is trivial. 		If $k=4,8$, then $1\neq y^2\in Z(R)$ and, therefore, $y^2\in\fix_{G}(\mathcal{B})=\{1\}$. This contradiction implies $k=2$. 
		
		Now, the subgroups $(R_{2'})^{\mathcal  B}$ and $(T_{2'})^{\mathcal  B}$ are regular cyclic subgroups of $S_{{\mathcal  B}}$ of order $n$.  By \cite[Theorem 4.9]{Muzychuk1999} there exists $g\in \sg{R_{2'},T_{2'}}$ such that $\sg{R_{2'},T_{2'}^g}^{\mathcal  B}$ has a normal block system ${\mathcal  C}/{\mathcal  B}$ with blocks of size $p$. 
		By Sylow's  theorem there exists $g'$ such that $\langle R_p,(T_p^g)^{g'}\rangle$ is a $p$-group. Thus replacing $g$ by $gg'$, we can assume that $P=\langle R_p,T_p^g\rangle$ is a $p$-group.
		
		Let ${\mathcal D}$ be the set of orbits of $\fix_{\langle R, T^g\rangle}({\mathcal C})$.  
		Then ${\mathcal D}$ is a normal block system of $\langle R, T^g\rangle$, and it refines ${\mathcal C}$.  
		Since $R_p, T_p^g \le \fix_{\langle R, T^g\rangle}({\mathcal C})$, each block of ${\mathcal D}$ has size divisible by $p$.  
		Thus every block of ${\mathcal D}$ has size $\ell p$,  which is a multiple of $p$ and a divisor of $kp$; hence $\ell p$ is $p$ or $2p$.
		
		For each $C \in {\mathcal C}$, the groups $R_p$ and $T_p^g$ have $\ell$ orbits on $C$, each of size $p$.  
		Each such orbit is contained in an appropriate $P$-orbit on $C$.  
		Since $k=2 < p$,  $|P| < p^{2}$, which implies that all $P$-orbits on $C$ have size $p$ and therefore coincide with the corresponding orbits of $R_p$ and $T_p^g$.  
		In other words, $\mathrm{Orb}(R_p,C) = \mathrm{Orb}(T_p^g,C)$ for all $C \in {\mathcal C}$.
		By Lemma~\ref{2pbottom}, there exists $h \in \langle R, T^g\rangle$ such that  
		$\langle R, T^{gh}\rangle$ has a normal block system with block size $|R_p| = p$,  
		and part~\eqref{pabblo1} is satisfied.
		
		
		\begin{case}
		Assume that ${\mathcal  B}$ is a normal block system for $R$.
		\end{case}
		
		Then ${\mathcal B}$ is the set of orbits of some normal subgroup $R' \trianglelefteq R$, and $|R'| = k$.  
		We claim that ${\mathcal B}$ is normal for $T$ as well.  
		Indeed, for a given block $B \in {\mathcal B}$, its setwise stabiliser $T_{\{B\}}$ is a subgroup of order $k$.  
		Any group in ${\mathcal R}$ has the property that if a subgroup $X \le R \in {\mathcal R}$ is normal, then every subgroup of order $|X|$ is normal.  
		Applying this to $T_{\{B\}}$, we obtain $T' \trianglelefteq T$, and hence ${\mathcal B}$ is also normal for $T$.
		Therefore, $T^{\mathcal B} \cong T/T' \cong R/R' \cong R^{\mathcal B}$.
		
		By induction, either
		\begin{itemize}
			\item there exists $g_1 \in \langle R^{\mathcal B}, T^{\mathcal B} \rangle$ such that  
			$\langle R^{\mathcal B}, (T^{\mathcal B})^{g_1} \rangle$ has a normal block system with blocks of size $p$, or
			\item $|\mathcal{B}| \in \{12,24\}$.
		\end{itemize}
		In the second case, if $|\mathcal{B}| = 24$, we obtain a contradiction, since  
		$|R| = 2^{d} \cdot 24 = 2^{d+3} \cdot 3$,
		and no group in ${\mathcal R}$ has a Sylow $2$-subgroup of order greater than $8$ with a cyclic quotient of order $8$.  
		If $|\mathcal{B}| = 12$, then by Definition~\ref{190921a} we deduce that $R =\mathbb{Z}_3\rtimes\mathbb{Z}_8$, and part~\eqref{pabblo1} or~\eqref{pabblo3} follows from a computation in Magma.  
		Therefore, we may assume that the first case occurs.
		
		Let $g \in \langle R, T\rangle$ satisfy $g^{\mathcal B} = g_1$.  
		Then $
		\langle R^{\mathcal B}, (T^{\mathcal B})^{g_1} \rangle 
		= \langle R, T^{g} \rangle^{\mathcal B}$,
		which implies that $\langle R, T^{g} \rangle$ has a block system ${\mathcal C}$ with block size $kp$.
		
		If $p > k$, then, using the same argument as above and conjugating if necessary to ensure that  
		$R_p$ and $T_p$ lie inside a common $p$-subgroup, the hypotheses of Lemma~\ref{2pbottom} are satisfied.  
		Thus there exists $g' \in \langle R, T^g\rangle$ such that  
		$\langle R, T^{gg'} \rangle$ has a normal block system with blocks of size $|R_p| = p$, and part~\eqref{pabblo1} holds.  
		Therefore, it remains to consider the cases in which $k > p$.
		
		If $k = 4$, then $k > p$ implies that $n = p = 3$.  
		Since $R \in {\mathcal R}$ has a normal subgroup of order $k$, the possibilities for $R$ are  
		$\Z_{n} \times \Z_{4}$, $\Z_{n} \times Q_{8}$, $\Z_{n} \times \Z_{2}^{e}$ with $2 \le e \le 4$,  
		or $\Z_{n} \rtimes \langle y \rangle$ with $o(y) = 8$ and $y^{2} \in \mathsf{Z}(R)$.  
		As $|\Aut(R_{2})|$ is coprime to $n = 3$, all possibilities except  
		$\Z_{3} \times \Z_{4}$ and $\Z_{n} \rtimes \langle y \rangle$ with $o(y) = 8$ and  
		$y^{2} \in \mathsf{Z}(R)$ are eliminated.  
		The first case satisfies part~\eqref{pabblo1} by~\cite[Theorem~4.9]{Muzychuk1999}.  
		A computer computation shows that the second case satisfies either part~\eqref{pabblo1} or part~\eqref{pabblo3}.
		
		Assume now that $k > 4$, i.e.\ $k = 8,16$.  
		Since $R \in {\mathcal R}$ has a normal subgroup of order $k$, we have  
		$R_{2} \cong Q_{8}$ or $\Z_{2}^{e}$ with $e \in \{3,4\}$.  
		According to the classification of primitive groups of small degree \cite{DixonM1996},  
		the socle of $G_{\{B\}}^{B}$ is either elementary abelian, the alternating group $A_{B}$,  
		or $\PSL_{2}(7)$ in its natural action on $8$ points.  
		As $\mathrm{PGL}_{2}(7)$ contains neither regular elementary abelian subgroups of order $8$  
		nor regular subgroups isomorphic to $Q_{8}$, the last case cannot occur here.  
		Thus either $A_{B} \le G_{\{B\}}^{B}$ or $G_{\{B\}}^{B}$ is affine primitive.  
		In the first case, $\soc(G_{\{B\}}^{B}) \cong A_{k}$, and by Lemma~\ref{smaller-blocks}  
		we obtain a contradiction to~\eqref{hypothesiseq}.
		
		Assume now that $G_{\{B\}}^{B}$ is affine.  
		Suppose that $R_{2}$ is elementary abelian.  Denote by $E$ a subgroup of  $R_2$ satisfying $E = R_{\{B\}} \leq R_{2}$. Notice that $E$ is an elementary abelian $2$-group of order $k$.
		 
		For $B \in {\mathcal B}$, the group $\fix_{G}({\mathcal B})^{B}$ is isomorphic  
		to a subgroup of $E \rtimes \Aut(E)$, and hence, for  
		$C \in {\mathcal C}$ with $B \subseteq C$, the group  
		$\fix_{G}({\mathcal C})^{C}$ is isomorphic to a subgroup of  
		$
		(E \rtimes \Aut(E)) \wr \Z_{p}$.
		The assumption $\pi(|\Aut(R_{2})|) \cap \pi(n) = \emptyset$ implies that $\pi(\Aut(E))\cap\pi(n)=\emptyset$. Hence  
		$p^{2} \nmid |\fix_{G}({\mathcal C})^{C}|$.  
		Therefore, for each $C \in {\mathcal C}$, the subgroups $(R_{p})^{C}$  
		and $(T_{p})^{C}$ satisfy the hypotheses of Lemma~\ref{2pbottom}.  
		Applying Lemma~\ref{2pbottom}, we conclude that part~\eqref{pabblo1} is satisfied.

		Suppose that $R_{2}$ is not elementary abelian.  
		Then $R_{2} \cong Q_{8} \cong T_{2}$, and $k = 8$.  
		Let $r$ and $t$ be the unique central involutions of $R_{2}$ and $T_{2}$, respectively.  
		After conjugating $T_{2}$ if necessary, we may assume without loss of generality that  
		$\langle R_{2}, T_{2} \rangle$ is a $2$-group.  
		Let $z \in \langle R_{2}, T_{2} \rangle$ be an arbitrary central involution,  
		and let $B \in {\mathcal B}$.  
		Then $z^{B}$ centralizes both $R_{2}^{B}$ and $T_{2}^{B}$.  
		These are regular quaternion subgroups of $S_{B}$.  
		The only involution that centralizes a regular quaternion subgroup $Q_{8}$ in $S_{8}$  
		is the central involution of that $Q_{8}$.  
		Hence $z^{B} = r^{B}$, and analogously $z^{B} = t^{B}$.  
		Thus $r = t$, and consequently $\mathsf{Orb}(\langle r\rangle, X)$  
		is a block system of $\langle R, T\rangle$ with block size $2$.  
		But $\mathsf{Orb}(\langle r\rangle,X) \preceq {\mathcal B}$ and ${\mathcal B}$ is minimal.  
		Therefore $\mathsf{Orb}(\langle r\rangle,X) = {\mathcal B}$, which forces $k = 2$,  
		a contradiction.
	\end{proof}
	
	Not all permutation groups can be automorphism groups of particular types of combinatorial objects. In order for a permutation group $G$ to be the automorphism group of any colour digraph, it must be $2$-closed: that is, there cannot be any larger permutation group acting on the vertices that has the same orbits as $G$ on ordered pairs of vertices. Similarly, to be the automorphism group of a ternary relational structure, $G$ must be $3$-closed (no larger permutation group acting on the points has the same orbits on ordered triples). Observe that a permutation group is contained in its $3$-closure, which is contained in its $2$-closure, so being $2$-closed is a stronger property than being $3$-closed.
	
	\begin{remark}\label{examples of degree 12}
		In Theorem~\ref{041021a} part~\eqref{pabblo2}, the relevant transitive groups of degree $12$ are \texttt{TransitiveGroup}(12,$k$):
		\begin{itemize}
			\item $k=127$, of order $288$, which contains two conjugacy classes of regular subgroups isomorphic to $\Z_{3}\rtimes\langle y\rangle$;
			\item $k=204$, of order $1152$, which contains three such classes.
		\end{itemize}
		Neither of these groups is $3$-closed.
		
		In Theorem~\ref{041021a} part~\eqref{pabblo3}, the transitive groups of degree $24$ that arise are \texttt{TransitiveGroup}(24,$k$):
		\begin{itemize}
			\item $k=1479$, of order $576$, with two conjugacy classes of regular subgroups isomorphic to $\Z_{3}\rtimes\langle y\rangle$;
			\item $k=5042,5043$, both of order $2\,304$, each having two such classes;
			\item $k=9871,9878,9880,9883,9884$, all of order $9\,216$.  
			Among these, the first and third have two conjugacy classes of regular subgroups isomorphic to $\Z_{3}\rtimes\langle y\rangle$, while the others have three.
			\item $k=14269,14273,14510,14511$, all of order $36\,864$.  Among these, the first has three conjugacy classes of regular subgroups isomorphic to $\Z_{3}\rtimes\langle y\rangle$, while the others have four.
			\item $k=18102, 18314$, all of order $147\,456$, with four and five (respectively) conjugacy classes of regular subgroups isomorphic to $\Z_{3}\rtimes\langle y\rangle$.
			\item $k=20440, 20462$, all of order $589\, 824$, with four and six (respectively) conjugacy classes of regular subgroups isomorphic to $\Z_3\rtimes \langle y\rangle$.
			\item $k=22395$, of order $2\,359\,296$, which has six conjugacy classes of regular subgroups isomorphic to $\Z_3\rtimes \langle y\rangle$.
		\end{itemize}
		None of these groups is $3$-closed, except for \texttt{TransitiveGroup}(24,5043), which is not $2$-closed.
		
		No transitive group of degree 12 having more than one conjugacy class of regular dicyclic groups is $2$-closed, so if we move to the automorphism group of a binary relational structure, the regular dicyclic subgroups will always be conjugate. However, there are $2$-closed examples of degree 24 that have more than one conjugacy class of regular subgroups isomorphic to $\Z_3 \rtimes \sg{y}$ with $o(y)=8$. 
	\end{remark}
	
	From the statement of Theorem~\ref{041021a} it is clear that we can inductively find blocks of size $p$ where $p$ is the largest prime divisor of $|R|$ and then consider the action on these blocks, until and unless $|R|\in \{12, 24\}$. When $|R|=24$, by examining the proof of Theorem~\ref{041021a} we see that outcome (3) arises only when minimal blocks have size $2$ or $4$. When we move to consider the action on these blocks, there are either $6$ of them, in which case the induction proceeds again and we find $2$ blocks of size $3$, or there are $12$ of them. If at any point we are left with $12$ blocks, outcome (2) can arise if the minimal blocks have size $4$. Thus, the final sizes of blocks we find inductively can be any of the following: $4$ then $3$, $2$ then $4$ then $3$, $2$ then $3$ then $2$ then $2$, or $4$ then $3$ then $2$.
	With this analysis, Theorem~\ref{041021a} yields the following.
	\begin{cor}\label{111021a}
		Let $R\in {\mathcal  R}$, with  $|R|=2^e n$ where $0\le e\le 4$ and $n >1$ is odd and square-free, be a regular subgroup of $ S_{X}$. Assume that $|\mathrm{Aut}(R_2)|$ is coprime to $n$, where $R_2$ is a Sylow $2$-subgroup of $R$. 
		
		Then for each regular subgroup $T$ isomorphic to $R$, there exists $g\in\sg{R,T}$ such that the group $\sg{R,T^g}$  has a sequence of normal block systems
		$${\mathcal  B}_0\prec{\mathcal  B}_1\prec\cdots\prec{\mathcal  B}_{m - 1}\prec{\mathcal  B}_{m},$$
		where ${\mathcal  B}_0 $ consists of singleton sets, and ${\mathcal  B}_m$ has one block.  
		
		Furthermore, given $B_i\in\mathcal{B}_i$ for each $i$, except as noted below $\vert B_i\vert/\vert B_{i-1}|\ge \vert B_{i+1}\vert/\vert B_i\vert$ and both are prime for every $1\le i \le m-1$, and one of the following holds:
		\begin{itemize}
			\item $m=\Omega(2^en)$, and either:
			\begin{itemize}
				\item there are no exceptions, or
				\item $e=3$, $R/{\mathcal B}_{m-4} \cong \Z_3 \rtimes \sg{y}$ with $o(y)=8$, and $|B_{m-3}|/|B_{m-4}|=2$ while $|B_{m-2}|/|B_{m-3}|=3$; or
			\end{itemize}
			\item $m=\Omega(2^en)-1$, and either:
			\begin{itemize}
				\item $e=2$, $R/{\mathcal  B}_{m-2}$ is dicyclic, $|B_{m-1}|/|B_{m-2}|=4$, and $|B_m|/|B_{m-1}|=3$; or
				\item $e=3$, $R/{\mathcal B}_{m-3} \cong \Z_3 \rtimes \sg{y}$ with $o(y)=8$, and 
				\begin{itemize}
					\item $|B_{m-2}|/|B_{m-3}|=2$, $|B_{m-1}|/|B_{m-2}|=4$, and $|B_{m}|/|B_{m-1}|=3$, or 
					\item $|B_{m-2}|/|B_{m-3}|=4$, $|B_{m-1}|/|B_{m-2}|=3$, and $|B_{m}|/|B_{m-1}|=2$. 
				\end{itemize}
			\end{itemize}
		\end{itemize}
	\end{cor}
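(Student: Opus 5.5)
We will prove Corollary~\ref{111021a} by induction on $\Omega(2^en)$, applying Theorem~\ref{041021a} repeatedly to the group induced on a block system. Throughout we keep the invariant that the current group $\sg{R,T^g}$ has a chain of normal block systems ${\mathcal B}_0\prec\cdots\prec{\mathcal B}_j$ whose blocks are orbits of normal subgroups of $R$ (and of $T^g$). This keeps the induced groups $R^{{\mathcal B}_j}\cong R/R'$ regular and in ${\mathcal R}$ (Proposition~\ref{190921b}). The hypothesis that $|\Aut(R_2)|$ is coprime to $n$ passes to quotients, because $(R/R')_2$ is a quotient of $R_2$ and the relevant automorphism group orders only shrink in the cases arising from Definition~\ref{190921a}; this needs a short check.

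\emph{Step 1.} If $n>1$, apply Theorem~\ref{041021a} to $R$ and $T$. In outcome~\eqref{pabblo1} we obtain $g_1$ and a normal block system ${\mathcal B}_1=\mathsf{Orb}(R_p,X)$ of $\sg{R,T^{g_1}}$ with blocks of size $p$, the largest prime divisor of $n$.

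\emph{Step 2.} Pass to ${\mathcal B}_1$. The groups $R^{{\mathcal B}_1}$ and $(T^{g_1})^{{\mathcal B}_1}$ are isomorphic regular subgroups of order $2^en/p$, so we apply induction to them. Any element $h\in\sg{R,T^{g_1}}^{{\mathcal B}_1}$ produced by induction lifts to some $\tilde h\in\sg{R,T^{g_1}}$, and the block systems of the quotient pull back to block systems of $\sg{R,T^{g_1\tilde h}}$ that refine nothing below ${\mathcal B}_1$. Conjugating by $\tilde h$ preserves ${\mathcal B}_1$, since ${\mathcal B}_1$ is a block system of the whole group $\sg{R,T^{g_1}}$. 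Normality of the pulled-back systems follows because each is the orbit partition of $R_pR''$ for a normal $R''$, and likewise for $T$. Since the primes are peeled off in decreasing order, the ratios satisfy $|B_i|/|B_{i-1}|\ge|B_{i+1}|/|B_i|$. When only $2$-parts remain ($n=1$ in the quotient), the quotient is a $2$-group and we need a chain of normal block systems with prime ratios for it. I expect this to require a separate argument: either the ambient group at that stage is a $2$-group after conjugation by Sylow, or we must rely on the paper's standing setup where the relevant $2$-parts have at most $4$ (or $8$) points and are handled directly.

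\emph{Step 3: exceptions.} The main obstacle is tracking outcomes~\eqref{pabblo2} and~\eqref{pabblo3}. These occur only when the current quotient has order $12$ or $24$ and is $\Z_3\rtimes\sg y$ with $o(y)=4$ or $8$. Following the remark before the statement, in these cases we use the proof of Theorem~\ref{041021a} directly instead of its conclusion: outcome~\eqref{pabblo3} arises only when the minimal block system has blocks of size $2$ or $4$.
\begin{itemize}
\item For size $2$, the quotient has order $12$. We recurse: either we get blocks of size $3$ and then $2,2$, or outcome~\eqref{pabblo2} with a block of size $4$ followed by $3$.
\item For size $4$, the quotient has order $6$, giving $4$, then $3$, then $2$.
\item Outcome~\eqref{pabblo2} at order $12$ gives $4$ then $3$.
\end{itemize}
Each case yields one of the listed chains. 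When a ratio $4$ appears, the chain has one fewer step, which gives $m=\Omega(2^en)-1$. The patterns $2,3,2,2$ and $4,3,2$ violate the monotonicity of ratios in exactly the positions stated. Identifying $R/{\mathcal B}_{m-2}$ as dicyclic, or $R/{\mathcal B}_{m-3}$ and $R/{\mathcal B}_{m-4}$ as $\Z_3\rtimes\sg y$, is immediate from the order and from Definition~\ref{190921a}.
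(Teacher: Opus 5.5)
Your route is the paper's own. Its only justification is the paragraph preceding the corollary, which iterates Theorem~\ref{041021a} on induced groups, peels off primes in decreasing order, and does exactly the $12/24$ bookkeeping of your Step~3.

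\textbf{The $2$-part at the top of the chain.} This is the genuine hole, and it is the step you leave as an expectation. Theorem~\ref{041021a} requires $n>1$. Moreover, the induced group $H=\sg{R,T^g}^{\mathcal B_j}$ on the $2^e$ orbits of the Hall $2'$-subgroup of $R$ need not be imprimitive. For example, take $R=\Z_5\times\sg{(1\,2\,3\,4)}$ and $T=\Z_5\times\sg{(1\,2\,4\,3)}$ acting on $\Z_5\times\{1,2,3,4\}$; then $H=S_4$. So your second suggestion (small degrees handled directly) cannot work, and the paper is silent here as well.

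Your first suggestion does work, but it must be carried out:
\begin{enumerate}
\item By Sylow's theorem choose $h\in H$ such that $R^{\mathcal B_j}$ and $((T^g)^{\mathcal B_j})^h$ lie in a common Sylow $2$-subgroup of $H$, and lift $h$ to $\tilde h\in\sg{R,T^g}$.
\item Then $\sg{R,T^{g\tilde h}}^{\mathcal B_j}$ is a transitive $2$-group. It has a chain of normal block systems with all ratios $2$: take the orbits of a central involution (semiregular, because it is central in a transitive group) and induct on the quotient.
\item The lower systems $\mathcal B_i$ stay normal in the smaller group $\sg{R,T^{g\tilde h}}\le\sg{R,T^g}$, because their blocks are orbits of subgroups of $R$.
\end{enumerate}

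\textbf{Normality of pulled-back systems.} Your argument (each pulled-back system is the orbit partition of a normal subgroup of $R$) fails at the exceptional step where the dicyclic quotient of order $12$ has blocks of size $4$. Those blocks are cosets of the non-normal subgroup $\sg y$. Use instead the following fact: if $\mathcal B$ is normal for $G$ and $\mathcal C/\mathcal B$ is normal for $G^{\mathcal B}$, then $\mathcal C$ is normal for $G$. This holds because $\fix_G(\mathcal C)$ contains $\fix_G(\mathcal B)$ and maps onto $\fix_{G^{\mathcal B}}(\mathcal C/\mathcal B)$.

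The same fact gives normality of the pulled-back $2$-part systems above. For the size-$4$ system of a group in outcome~\eqref{pabblo2}, normality holds there because the system is minimal (as the paper's remark asserts) and its kernel contains $y^2\neq 1$.

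\textbf{A small correction.} The pattern $4,3,2$ is monotone. It is exceptional only because $4$ is not prime.
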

	
	When $m = \Omega(2^en)$, $\la R,T^g\ra$ is normally $m$-step imprimitive.
	
	A remark is also in order concerning the condition $\pi(|\Aut(R_2)|)\cap \pi(n)=\emptyset$.  
	For groups $R \in \mathcal{R}$, the possible Sylow $2$-subgroups are  
	$$R_2 \in \{1, \Z_2, \Z_2^2, \Z_2^3, \Z_2^4, \Z_4, \Z_8, Q_8\},$$  
	with $|\Aut(R_2)|$ equal to  
	$1, 1, 6, 168, 20160, 2, 4,$ and $24$, respectively.  
	Accordingly, the odd primes that must not divide $n$ are: none (for $1$ and $\Z_2$), $3$ (for $\Z_2^2$), $3$ or $7$ (for $\Z_2^3$), $3$, $5$, or $7$ (for $\Z_2^4$), none (for $\Z_4$ and $\Z_8$), and $3$ (for $Q_8$).  
	Theorem~4.2 in \cite{DobsonS2013} shows that several of these exclusions are genuinely necessary: in certain cases, groups whose orders are divisible by these primes are not CI-groups with respect to ternary relational structures.
	
	\section{Applications}
	
	The following result follows from Theorem \ref{041021a} and Theorem~\ref{dihedral-conditional}.  It generalizes \cite[Corollary 17]{Dobson2003}, which generalizes \cite[Theorem 22]{Dobson2002}, which in turn generalizes \cite[Theorem 4.4]{Babai1977}. The ``binary" piece follows because,  CI-groups with respect to ternary relational structures are CI-groups with respect to binary relational structures.
	
	\begin{thrm}\label{ternarymain}
		Let $n$ be an integer such that $\gcd(n,\varphi(n)) = 1$.  Then $\Z_n\rtimes\Z_2$ is a CI-group with respect to binary and ternary relational structures.
	\end{thrm}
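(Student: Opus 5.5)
The plan is to combine Babai's criterion with the conditional result Theorem~\ref{dihedral-conditional}, and to use Corollary~\ref{111021a} to produce the block system with two blocks that Theorem~\ref{dihedral-conditional} requires.

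\emph{Reduction to the setting of Theorem~\ref{dihedral-conditional}.} If $n\le 2$ the group has order at most $4$ and the claim is immediate, so assume $n>2$. Then $\varphi(n)$ is even, so $\gcd(n,\varphi(n))=1$ forces $n$ odd. Since $\gcd(n,\varphi(n))=1$, $n$ is also square-free. Let $G=\Z_n\rtimes\Z_2$, where the generator $y$ of $\Z_2$ acts by an involutory (or trivial) automorphism of $\Z_n$. Since $\Z_n$ is a product of groups of prime order, $\Z_n=\Z_a\times\Z_m$ with $y$ centralising $\Z_a$ and inverting $\Z_m$. Hence $G\cong\Z_a\times D_{2m}$ with $am=n$; here $m=1$ is allowed, with $D_2=\Z_2$, which covers the case where $G$ is cyclic. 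Because $\gcd(am,\varphi(am))=1$, Theorem~\ref{dihedral-conditional} applies to every ternary relational structure $X$ on $G$. It therefore suffices to show the following: for any two regular subgroups $R,T\le\Aut(X)$ isomorphic to $G$, there is $\psi\in\Aut(X)$ such that $\sg{R,T^\psi}$ has a block system with $2$ blocks of size $n$. The binary statement follows since ternary CI-groups are binary CI-groups.

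\emph{Verifying the hypotheses of Corollary~\ref{111021a}.} The group $G$ lies in $\mathcal R_n$: it is of type (a) with $R_2=\Z_2$ when $m=1$, and of type (b) with $o(y)=2$ otherwise. Its order is $2^e n$ with $e=1$, and $n>1$ is odd and square-free. Its Sylow $2$-subgroup is $\Z_2$, and $|\Aut(\Z_2)|=1$ is coprime to $n$. So Corollary~\ref{111021a} applies to $R$ and $T$. It yields $g\in\sg{R,T}\le\Aut(X)$ such that $\sg{R,T^g}$ has a chain of normal block systems $\mathcal B_0\prec\cdots\prec\mathcal B_m$.

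\emph{Extracting the two-block system.} All exceptional alternatives in Corollary~\ref{111021a} require $e\in\{2,3\}$, so none of them occur here. Therefore $m=\Omega(2n)$, and every successive ratio $|B_i|/|B_{i-1}|$ is prime. Moreover, these ratios are nonincreasing as $i$ increases. The multiset of ratios is exactly the prime factorisation of $2n$, in which $2$ is the smallest prime and occurs once. Hence the last ratio satisfies $|B_m|/|B_{m-1}|=2$. So $\mathcal B_{m-1}$ consists of $2$ blocks of size $n$, and it is a block system of $\sg{R,T^g}$. Taking $\psi=g$ verifies condition (2) of Theorem~\ref{dihedral-conditional}. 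Hence every such $X$ has the CI property, and $G$ is a CI-group with respect to ternary, and therefore also binary, relational structures.

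\emph{Expected obstacle.} The substantive difficulty lies entirely in Theorem~\ref{041021a} and its corollary, which we take as given. The only point needing care here is the bookkeeping in the last step: reading off from the monotonicity of the prime ratios in Corollary~\ref{111021a} that the top quotient has order $2$. One must also confirm that the exceptional chains in that corollary cannot occur when $e=1$, which holds because each of them is stated only for $e=2$ or $e=3$.
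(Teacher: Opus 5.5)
Your proposal is correct and takes essentially the paper's route: the paper says the theorem follows from Theorem~\ref{041021a} (iterated, as recorded in Corollary~\ref{111021a}) together with Theorem~\ref{dihedral-conditional}, and that is exactly what you do. Your write-up also makes explicit the details the paper leaves implicit: the reduction to $\Z_a\times D_{2m}$ with $n$ odd and square-free, the membership $G\in\mathcal R_n$ with $|\Aut(\Z_2)|=1$, and the fact that for $e=1$ no exceptional chain occurs, so the top ratio is $2$ and you get the two-block system.
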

	
	The following result is a combination of Theorem~\ref{ternarymain} and Corollary~\ref{previous-nonabelian}.
	
	\begin{cor}\label{dihedral cor}
		Let $n$ be odd.  Then $\Z_n\rtimes\Z_2$ is a CI-group with respect to ternary relational structures if and only if $\gcd(n,\varphi(n)) = 1$.
	\end{cor}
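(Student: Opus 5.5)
The statement is an ``if and only if'', and I will prove the two directions separately, with Theorem~\ref{ternarymain} and Corollary~\ref{previous-nonabelian} doing nearly all the work. For sufficiency, suppose $\gcd(n,\varphi(n))=1$. Then Theorem~\ref{ternarymain} applies directly and shows that $\Z_n\rtimes\Z_2$ is a CI-group with respect to ternary relational structures. Here $\Z_n\rtimes\Z_2$ denotes the dihedral group $D_{2n}$, where the involution inverts $\Z_n$.

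For necessity, suppose $\Z_n\rtimes\Z_2$ is a CI-group with respect to ternary relational structures. I would split into two cases according to $n$.

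\emph{Case $n=1$.} The group is $\Z_2$, and the condition $\gcd(1,\varphi(1))=1$ holds trivially.

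\emph{Case $n>1$.} Since $n$ is odd, the group $G=D_{2n}$ is nonabelian, so Corollary~\ref{previous-nonabelian} applies and gives $G=U\times V$ with the following properties:
\begin{itemize}
\item $\gcd(|U|,|V|)=1$;
\item $U$ is cyclic of odd order $n'$;
\item $V$ is $Q_8$, dihedral of order $2m$, or dicyclic of order $4m$, with $m$ odd and $\gcd(n'm,\varphi(n'm))=1$.
\end{itemize}

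The Sylow $2$-subgroup of $G$ has order $2$, so $V$ can be neither $Q_8$ nor dicyclic. Hence $V\cong D_{2m}$ and $|G|=2n'm=2n$, which gives $n=n'm$. Therefore $\gcd(n,\varphi(n))=\gcd(n'm,\varphi(n'm))=1$.

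This group-theoretic identification could be bypassed entirely, because only the order bookkeeping $2n=|U|\cdot|V|=n'\cdot 2m$ is needed to conclude $n=n'm$. Even so, the identification $U=1$ is easy to confirm: the center of $D_{2n}$ is trivial for odd $n$, whereas $U$ would be central.

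I expect no real obstacle. The only point needing care is excluding the $Q_8$ and dicyclic options for $V$ and matching the orders, and that follows from comparing Sylow $2$-subgroups.
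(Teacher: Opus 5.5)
Your proof is correct and follows the same route as the paper. Sufficiency is Theorem~\ref{ternarymain}, and necessity comes from Corollary~\ref{previous-nonabelian}: the Sylow $2$-subgroup of order $2$ forces $V$ to be dihedral, and the order count gives $n=n'm$. Your separate handling of $n=1$, where the group is abelian and Corollary~\ref{previous-nonabelian} does not apply, is a detail the paper leaves implicit.
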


	The following result
	generalizes \cite[Corollary 29]{Dobson2003}.  The special case when $p$ is prime was proven for binary relational structures in \cite[Theorem 1.3]{LiLP2007}.
	
	\begin{thrm}\label{231221a}
		Let $n = km$ be an integer such that $\gcd(n,\varphi(n)) = 1$.  If $3\nmid m$, then $\Z_k\times(\Z_m\rtimes\Z_4)$ is a CI-group with respect to binary relational structures. If, in addition, no prime divisor $p$ of $n$ satisfies $p\equiv 1\ \pmod{4}$, then $\Z_k\times(\Z_m\rtimes\Z_4)$ is a CI-group with respect to ternary relational structures.
	\end{thrm}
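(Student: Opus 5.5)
We combine Theorem~\ref{041021a} with Theorem~\ref{dicyclic-conditional}, via Lemma~\ref{Babai lemma}. Set $G=\Z_k\times(\Z_m\rtimes\Z_4)$ with $3\nmid m$. Since $\gcd(n,\varphi(n))=1$, the integer $n=km$ is odd and square-free, and $\Z_k$ and $\Z_m$ are coprime. The Sylow $2$-subgroup of $G$ is $\Z_4$. Write the element of order four as $y$. Then $y$ inverts $\Z_m$ and $y^2$ is central, so $G\cong \Z_n\rtimes\sg{y}$ with $o(y)=4$, $y\notin\mathsf{Z}(G)$ (when $m>1$; if $m=1$ the group is cyclic of order $4k$ and we fall back on the known cyclic results) and $y^2\in\mathsf{Z}(G)$. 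Hence $G\in\mathcal{R}$ as in Definition~\ref{190921a}(b). Moreover $|\Aut(\Z_4)|=2$ is coprime to $n$, so the hypotheses of Theorem~\ref{041021a} hold. If $n=1$ the group is $\Z_4$ and there is nothing to prove, so assume $n>1$.

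Fix a binary (respectively ternary) relational structure $X$ on $G$ and two regular subgroups $R_1,R_2\le\Aut(X)$ isomorphic to $G$. By Theorem~\ref{dicyclic-conditional}, it suffices to find $\psi\in\Aut(X)$ such that $\sg{R_1,\psi^{-1}R_2\psi}$ has a block system with $4$ blocks of size $n$. In the ternary case we also use the hypothesis that no prime divisor is $\equiv1\pmod4$. Combined with the first conclusion, this gives every prime divisor $\equiv 3\pmod 4$, which is exactly the extra condition in that theorem.

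Apply Corollary~\ref{111021a} to $R=R_1$ and $T=R_2$. It yields $g\in\sg{R_1,R_2}\le\Aut(X)$ and a chain of normal block systems of $H=\sg{R_1,R_2^g}$. Since $3\nmid m$, the exceptional outcomes (2) and (3) of Theorem~\ref{041021a} cannot arise. The exceptional case $|R|=12$ requires $R=\Z_3\rtimes\sg{y}$ with $y$ acting nontrivially, hence $3\mid m$. The case $|R|=24$ requires $o(y)=8$, which does not occur here. If $k=3$ and $m=1$, the group is abelian $\Z_{12}$, which is not of the excluded form. So we are in the generic situation of the corollary. The successive prime quotients of the chain are nonincreasing, so all odd primes are peeled off first, starting from the largest. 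Consequently the block system ${\mathcal B}_{m'}$ reached after the odd primes have been used has blocks of size $n$, and it has exactly four blocks.

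The remaining check is that this chain really arises, and this is the step I expect to be the main obstacle. At each stage the quotient group must again satisfy the hypotheses. This holds because ${\mathcal R}$ is closed under quotients (Proposition~\ref{190921b}) and the Sylow $2$-subgroup stays cyclic. We also need that the final block system of size $n$ is genuinely a block system of $H$, not merely of the quotient actions; this follows from pulling back along the chain. Taking $\psi=g$ then verifies condition (2) of Theorem~\ref{dicyclic-conditional}, so $X$ is a CI-object. Since $X$ was arbitrary, Lemma~\ref{Babai lemma} shows that $G$ is a CI-group in each of the two classes.
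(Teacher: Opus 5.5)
Your proposal is correct and is essentially the paper's proof: Corollary~\ref{111021a} gives $g\in\sg{R_1,R_2}\le\Aut(X)$ such that $\sg{R_1,R_2^g}$ has a block system of four blocks of size $n$, and Theorem~\ref{dicyclic-conditional} then yields the CI property in both the binary and ternary settings. The block size is $n$ because the non-increasing prime ratios put all odd primes below the $2$'s, and the dicyclic-of-order-$12$ exception is impossible since $3\nmid m$. Two small points: first, that exception concerns quotients $R/\mathcal{B}_i$ rather than $R$ itself, and your observation that a nontrivially acting $\Z_3\rtimes\Z_4$ section forces $3\mid m$ already covers this. Second, the case $m=1$ needs no separate appeal to cyclic results: $\Z_k\times\Z_4$ lies in $\mathcal{R}$ by Definition~\ref{190921a}(a), so the same argument applies.
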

	
	\begin{proof}
		Let $\Gamma$ be a binary Cayley relational structure of $\Z_k\times(\Z_m\rtimes\Z_4)$. Let $R\le\Aut(\Gamma)$ be regular and isomorphic to $\Z_k\times(\Z_m\rtimes\Z_4)$, and let $\delta\in S_{\Z_k\times(\Z_m\rtimes\Z_4)}$ be such that $R^\delta\in{\rm Aut}(\Gamma)$. By Corollary~\ref{111021a} there exists $g\in\sg{R,R^\delta}$ such that $\sg{R,R^{\delta g}}$ has a block system with four blocks of size $n$. By Theorem~\ref{dicyclic-conditional}, $\Gamma$ has the CI property.
		
		If every prime divisor $p$ of $n$ satisfies $p\equiv\,1 \ ({\rm mod}\ 4)$, then the same argument based on Corollary~\ref{111021a} and Theorem~\ref{dicyclic-conditional} yields that $\Z_k\times(\Z_m\rtimes\Z_4)$ is a CI-group with respect to ternary relational structures.
	\end{proof}
	
	In the next section  we prove the converse of the above result with respect to ternary relational structures: that is, we prove that if there is a prime divisor $p$ of $mk$ such that $p \equiv 1 \pmod{4}$, then $\Z_k \times (\Z_m \rtimes \Z_4)$ is not a CI-group with respect to ternary relational structures. This culminates in the characterization given in Corollary \ref{dicyclic cor}. 
	\section{On Inner Holomorphs that are 3-closed}\label{sec:holomorph}
	
	The \textbf{inner holomorph} of a finite group $G$ is the semidirect product $G \rtimes \inn(G)$.  
	It acts on $G$ via
	$x^{(g,h)} = h^{-1} x g h$, where $x,g,h \in G$.
	Under this action, the image of $G \rtimes \inn(G)$ in $S_G$ is the product $G_L \cdot G_R$, where  
	$G_L$ and $G_R$ denote the left and right regular representations of $G$, respectively.
	Another description of the inner holomorph is obtained as follows.  
	Let $G \times G$ act on $G$ by 
	$x^{(g,h)} = g^{-1} x h$.
	The stabilizer of the identity element is the diagonal subgroup $
	D = \{(g,g) : g \in G\}$,
	and the kernel of the action is 
	$\{(z,z) : z \in Z(G)\}.$
	The image of $G \times G$ in $S_G$ under this action is again $G_L \cdot G_R$;  
	the image of $G \times \{e\}$ is $G_L$, and the image of $\{e\} \times G$ is $G_R$.

	\begin{prop}\label{3closed} Let $F$ be a Frobenius group with abelian complement $H$. Assume that the natural action of $F$ on the right cosets of $H$ is $2$-closed. Then the inner holomorph of $F$ is $3$-closed provided that $|H|>2$.
	\end{prop}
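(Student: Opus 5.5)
We write $F = K \rtimes H$, where $K$ is the Frobenius kernel, and set $W = F_L F_R$ acting on $F$. The goal is to show that the $3$-closure $W^{(3)}$ equals $W$. Since $W^{(3)}$ preserves every orbit of $W$ on triples, it suffices to prove that the point stabiliser $W^{(3)}_e$ equals $W_e = \inn(F)$, which acts on $F$ by conjugation. An element $\sigma$ of $W^{(3)}_e$ preserves every orbit of $W_e$ on pairs. So for every pair $(x,y)$ there is some $c \in F$ with $\sigma(x) = x^c$ and $\sigma(y) = y^c$. The plan is to use this pairwise information to pin $\sigma$ down to a single conjugation.

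\textbf{Step 1.} Since $\sigma$ fixes $e$ and maps each element to a conjugate, $\sigma$ preserves every conjugacy class of $F$. Because $W^{(3)} \le W^{(2)}$, it also preserves the orbitals of $W$. The pair $(x,y)$ lies in the same $W$-orbit as $(x^c,y^c)$, and this forces $\sigma(xy^{-1})$ to be conjugate to $xy^{-1}$ (via the pair-condition applied to $(e, xy^{-1})$ and its translates). We then look at how $\sigma$ acts on the normal subgroup $K$ and on the cosets $Kh$. The right cosets of $K$ are unions of conjugacy classes, so they are fixed by $\sigma$. Left multiplication by the identity then gives that $\sigma$ permutes $F/K \cong H$ trivially. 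On the other hand, the $H$-conjugates of $H$ form the point set of the natural action of $F$ on the right cosets of $H$.

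\textbf{Step 2: connect to the Frobenius action.} The complements $H^k$ ($k \in K$) are in bijection with the cosets $Hk$. We will show that $\sigma$ permutes the nonidentity parts of these complements: for $h \in H \setminus \{1\}$, the element $\sigma(h)$ lies in a unique complement $H^{k}$. Using the triple condition on triples $(e,h,h')$ with $h,h' \in H$, both $h$ and $h'$ are conjugated by the same $c$. Since $H$ is abelian and $|H|>2$, we can pick $h' \notin \{1,h,h^{-1}\}$, and this rigidly ties all of $H$ to a single complement. Hence $\sigma$ induces a permutation $\bar\sigma$ of the set of complements, that is, of the cosets of $H$. We then show that $\bar\sigma$ preserves the orbitals of $F$ on cosets of $H$. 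This uses the fact that two complements $H^{k}$ and $H^{k'}$ are related through the element $k'k^{-1}$, which is recoverable from the conjugacy-class/pair data. By the $2$-closedness hypothesis, $\bar\sigma$ is induced by an element of $F$. Composing $\sigma$ with the inverse conjugation, we may assume $\sigma$ fixes every complement setwise and fixes $H$ elementwise up to the pair constraints.

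\textbf{Step 3: finish.} After this normalisation, any pair $(x,h)$ with $h \in H$ nontrivial is moved by some $c$ with $h^c = \sigma(h)$ in the same complement. Frobenius complements are self-normalising and intersect trivially, so $c$ is confined to $H\cdot C_F(h) = H$. Since $H$ is abelian, $H$ acts on $K$ fixed-point-freely, and now the triple condition with two elements $h,h'$ of $H$ forces $c$ to be uniform. This yields $\sigma = $ conjugation by a fixed element of $H$ on each complement, hence on all of $F = \bigcup_k H^k \cup K$. For the elements of $K$, use pairs $(k, h)$ and the fixed-point-free action of $H$ to read $\sigma(k)$ off from $\sigma$ on the complements.

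The main obstacle is Step 2, namely producing a well-defined permutation of the complements that preserves the $2$-orbitals of the coset action. This is exactly where $|H|>2$ is needed. When $|H|=2$, an element and its inverse coincide with $H\setminus\{1\}$, and dihedral-type counterexamples arise. The first approach I would try is to encode each complement $H^k$ by the triple orbit of $(e,h,h')$ for a fixed pair of distinct nontrivial $h,h' \in H$.
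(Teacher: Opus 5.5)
Your Steps 1--2 are essentially sound. They amount to the paper's observation that $\inn(F)$ acts on each nontrivial coset $hK$ as $F$ acts on the cosets of $H$, so by $2$-closedness $\sigma$ agrees there with an inner automorphism. After your normalisation, $\sigma$ is in fact already the identity on all of $F\setminus K$, because each $x\in F\setminus K$ is the unique element of its complement lying in $xK$. Two side remarks:
\begin{itemize}
\item An $h'\notin\{1,h,h^{-1}\}$ does not exist when $|H|=3$.
\item You do not need one. Any two nontrivial elements of $H$ are conjugated by a common $c$, so $\sigma(H\setminus\{1\})$ lies in a single complement for every $|H|\ge 2$. Hence $|H|>2$ plays no role in Step 2.
\end{itemize}

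The genuine gap is Step 3, showing that $\sigma$ is trivial on $K$. Every constraint you invoke there (pairs $(k,h)$, triples $(e,h,h')$) uses only that $\sigma$ preserves the $2$-orbits of $\inn(F)$, and this is not enough. Suppose $K$ is abelian, as with $K=\Z_p$ in Corollary~\ref{cor1}. Let $\tau$ be conjugation by a fixed $h_0\in H\setminus\{1\}$ on $K$ and the identity on $F\setminus K$.
\begin{itemize}
\item For $u\in K$ and $v\in F\setminus K$, the centraliser $C_F(v)=H^{k}$ contains $h_0^{k}$. On the abelian group $K$ this element acts exactly as $h_0$ does.
\item Pairs inside $K$ are handled by $h_0$, and pairs inside $F\setminus K$ by the identity.
\end{itemize}
So $\tau\in\inn(F)^{(2)}$, $\tau$ fixes every complement setwise, and yet $\tau\neq 1$. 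Consequently no argument using only pair data at the base point $e$ can \emph{read off} $\sigma(k)$.

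What is missing is a genuinely three-point ingredient. The paper exploits that $a_L\in W\le W^{(3)}$, so $W^{(3)}$ is closed under conjugation by left translations. Suppose $g\in (W^{(3)})_e$ is trivial on $F\setminus K$.
\begin{itemize}
\item Pick $h\ne e$ in $H$ and $a\in H\setminus\{e,h\}$. This is where $|H|>2$ is really needed, not in Step 2.
\item Put $g_1=a_Lga_L^{-1}$. Then $g_1\in (W^{(3)})_e$ fixes $F\setminus aK\supseteq K\cup hK$ pointwise.
\item Apply your Steps 1--2 to $g_1$: fixing $hK$ pointwise forces fixing all of $F\setminus K$. The paper does this via the $2$-orbit $\{(h^k,a^k):k\in K\}$. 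So $g_1$ also fixes $aK$.
\end{itemize}
Hence $g_1=1$, and so $g=1$. For $|H|=2$ no such $a$ exists, which is why dihedral groups behave differently.
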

	
	\begin{proof}
		Let $K$ be the Frobenius kernel of $F$ and let $G$ be the inner holomorph of $F$.  
		From the proof of \cite[Theorem 2.4]{OBrienPVV2022}, we have
		\[
		(G^{(3)})_e \leq (G_e)^{(2)},
		\]
		where $X^{(3)}$ and $X^{(2)}$ denote the $3$- and $2$-closures of a permutation group $X$, and where $e$ is the identity element of $F$.
		
		The group $G_e = \inn(F)$ is isomorphic to $F$ and acts on $F$ by conjugation.  
		Since $H$ is abelian, the set $F\setminus K$ splits into a disjoint union of $|H|-1$ orbits of $\inn(F)$, namely the nontrivial cosets $hK$ with $h\neq e$.  
		On the subset $K$, the group $\inn(F)$ has one orbit of size $1$ (namely $\{e\}$) and $\frac{|K|-1}{|H|}$ orbits of the form
		$k^H=\{h^{-1}kh\,:\, h\in H\},  k\in K$.
		
		The action of $\inn(F)$ on each coset $hK=Kh$ with $h\neq e$ is equivalent to the action of $F$ on the $H$-cosets, and is therefore $2$-closed.  
		Consequently,
		\[
		(\inn(F)^{(2)})^{hK} \leq (\inn(F)^{hK})^{(2)} = \inn(F)^{hK},
		\]
		and hence $(\inn(F)^{(2)})^{hK} = \inn(F)^{hK}$.  
		To show that $G$ is $3$-closed, it therefore suffices to prove that $(G^{(3)})_e$ acts faithfully on $hK$.
		
		Let $g \in (G^{(3)})_e \leq \inn(F)^{(2)}$ act trivially on $hK$.  
		Choose $a \in H\setminus\{e,h\}$.  
		The binary relation
		\[
		R_{h,a} = \{(h^k,a^k) : k \in K\}
		\]
		is a $2$-orbit of $\inn(F)$ and defines a bijection between the $\inn(F)$-orbits $hK$ and $aK$.  
		Since $R_{h,a}$ is $g$-invariant and $hK \subseteq \fix(g)$, we deduce that $aK$ is also fixed pointwise by $g$.  
		Thus any element that fixes $hK$ must act trivially on $F\setminus K$.
		
		Because left multiplication by $a$ sends $K$ to $aK$, the permutation
		$g_1 = a_L \, g \, a_L^{-1}$
		fixes $F\setminus aK$ pointwise.  
		Hence $g_1 \in (G^{(3)})_e \leq \inn(F)^{(2)}$ and $\fix(g_1) \supseteq hK$.  
		It follows that $g_1$ acts trivially on $F\setminus K$, and therefore $g_1$ is the identity.  
		Consequently $g$ is the identity as well.
	\end{proof}
	
	\begin{cor}\label{cor1} Let $p$ be a prime and let $\omega \in \Z_p^*$ be a primitive $n$th root of unity for some integer $n$ with $2 < n < p-1$. 
		Let $F = \Z_p \rtimes \langle \omega \rangle$ be the Frobenius group of order $np$. 
		Then the inner holomorph of $F$ is $3$-closed, and $F$ is not a CI-group with respect to ternary relational structures.
	\end{cor}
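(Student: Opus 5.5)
The plan is to apply Proposition~\ref{3closed} with $K=\Z_p$ and $H=\langle\omega\rangle$. Then, for the non-CI assertion, I will exhibit inside the inner holomorph two regular subgroups isomorphic to $F$ that are not conjugate, and use Lemma~\ref{Babai lemma}.

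\textbf{Hypotheses of Proposition~\ref{3closed}.} Since $\omega$ has order $n>1$ in $\Z_p^*$, $F=\Z_p\rtimes\langle\omega\rangle$ is a Frobenius group with kernel $\Z_p$ and cyclic, hence abelian, complement $H$ of order $n>2$. The action of $F$ on the right cosets of $H$ is the natural action of $F$ as a subgroup of $\AGL_1(p)$ on $\Z_p$. I claim this action is $2$-closed. Its $2$-closure preserves every orbital $\{(x,y): y-x\in c\langle\omega\rangle\}$. Because $n<p-1$, there are at least two nontrivial orbitals, so the associated orbital graph is a circulant that is neither complete nor empty; this is where $n<p-1$ is used. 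By Burnside's theorem \cite[Theorem~3.5B]{DixonM1996}, the automorphism group of this colored circulant on $p$ points is either $2$-transitive or contained in $\AGL_1(p)$. It cannot be $2$-transitive, since it preserves a nontrivial orbital partition. Hence the $2$-closure lies in $\AGL_1(p)$, which is the setting where orbitals are determined by cosets of subgroups of $\Z_p^*$. So the $2$-closure is $\Z_p\rtimes\langle\omega\rangle=F$. Proposition~\ref{3closed} then shows that the inner holomorph $G=F_L F_R$ is $3$-closed.

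\textbf{Non-CI part.} Since $G$ is $3$-closed, it is the automorphism group of some ternary relational structure $X$, namely its orbits on triples. Both $F_L$ and $F_R$ lie in $G$, so $X$ is a Cayley object of $F$. By Lemma~\ref{Babai lemma}, it suffices to find two regular subgroups of $G$ isomorphic to $F$ that are not conjugate in $G$.

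The natural candidates are $F_L$ and a twisted subgroup. Write $G\cong (F\times F)/\{(z,z): z\in Z(F)\}$, where $Z(F)=1$ since $F$ is Frobenius, so $G\cong F\times F$. The subgroups $F_L$ and $F_R$ are normal in $G$, so they are conjugate only if equal, and $F_L\neq F_R$ because $F$ is nonabelian. Thus $F_L$ and $F_R$ already form a nonconjugate pair of regular subgroups isomorphic to $F$, which gives the non-CI conclusion directly.

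\textbf{Main obstacle.} I expect the $2$-closedness step to be the hardest part. Justifying that the $2$-closure of $F$ acting on $\Z_p$ is exactly $F$ needs care: one must rule out the $2$-closure being a larger subgroup of $\AGL_1(p)$. This holds because any $x\mapsto ax+b$ preserving each orbital must fix every coset of $\langle\omega\rangle$ in $\Z_p^*$, which forces $a\in\langle\omega\rangle$. If the condition $n>2$ were dropped, Proposition~\ref{3closed} would not apply at all.
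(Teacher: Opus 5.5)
Your proposal is correct and follows the paper's proof. You show the natural action of $F$ on $p$ points is $2$-closed because $n<p-1$, apply Proposition~\ref{3closed} with $|H|=n>2$, and take the distinct normal regular subgroups $F_L$ and $F_R$ as a non-conjugate pair; the only difference is that you spell out the $2$-closedness (via Burnside's theorem and $a\langle\omega\rangle=\langle\omega\rangle$) and the appeal to Lemma~\ref{Babai lemma}, both of which the paper leaves implicit.
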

	
	\begin{proof} The group $F$, in its natural action on $p$ points, is $2$-closed since $n < p-1$.  
		By Proposition~\ref{3closed}, its inner holomorph is $3$-closed because $n > 2$.  
		Finally, the inner holomorph contains two non-conjugate regular subgroups isomorphic to $F$, namely $F_L$ and $F_R$.
	\end{proof}
	
	\begin{cor}\label{cor2}Let $p$ be a prime,  let $n$ be a divisor of $p-1$ with $2 < n < p-1$, and let $\alpha\colon \Z_n\rightarrow \Z_p^*$ be a non-injective group homomorphism. Assume that $|\Ker(\alpha)|$ is even whenever $n$ is even.  Then the semidirect product $\Z_p\rtimes_\alpha\Z_n$ is not a CI-group with respect to ternary relational structures.
	\end{cor}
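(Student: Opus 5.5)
The plan is to reduce Corollary~\ref{cor2} to Corollary~\ref{cor1}, using the fact that the CI property (for any class of objects closed under the usual constructions) passes from a group to its quotients by suitable characteristic subgroups. More usefully here, a non-CI structure for a quotient lifts to one for the group itself via a wreath-type construction.

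\emph{Setup.} Let $K=\Ker(\alpha)$, say of order $k>1$, and let $\bar n=n/k=|\mathrm{Im}(\alpha)|$. Then $G=\Z_p\rtimes_\alpha\Z_n$ has central subgroup $K\cong\Z_k$, and $G/K\cong F=\Z_p\rtimes\la\omega\ra$, where $\omega$ is a primitive $\bar n$th root of unity in $\Z_p^*$. The condition $n<p-1$ gives $\bar n<p-1$.

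\emph{Main case, $2<\bar n<p-1$.} Corollary~\ref{cor1} gives a ternary Cayley structure $Y$ on $F$ that is not a CI-object. Concretely, $\Aut(Y)$ is the inner holomorph $F_L F_R$, and $F_L$, $F_R$ are non-conjugate in it. I would lift $Y$ to $G$ by taking the pullback of $Y$ along $G\to F$. To make the lifted automorphism group exactly $\Aut(Y)\wr S_k$ restricted appropriately, I would add the ternary relation ``$x,y$ lie in the same $K$-coset'' together with a colouring that identifies $K$-cosets. Its automorphism group is $S_k\wr \Aut(Y)$, with blocks the $K$-cosets.

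Both $G$ acting by left multiplication and a second regular copy, built from $F_R$ and the central $K$, are regular subgroups isomorphic to $G$. The second copy is available because $G\cong$ a central extension and $F_R$ lifts: take $\{x\mapsto xg\}$, right multiplication on $G$. Both left and right regular representations of $G$ preserve the pullback structure and the coset partition. Suppose they were conjugate by some $\phi$ in the automorphism group. Then $\phi$ preserves the block system of $K$-cosets, so it induces an element of $\Aut(Y)$ conjugating $F_L$ to $F_R$, a contradiction. By Lemma~\ref{Babai lemma}, $G$ is not CI.

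\emph{Remaining cases, where $\bar n\le 2$.} Here the quotient $F$ is $\Z_p$ or dihedral, so Corollary~\ref{cor1} does not apply, and this is the main obstacle. I expect the hypothesis that $|K|$ is even when $n$ is even to matter at exactly this point. My first attempt would be to choose a different subgroup $K'\le K$ with $G/K'\cong \Z_p\rtimes_{\alpha'}\Z_{n'}$. Here $\alpha'$ is still non-injective but $2<n'<p-1$, and one passes to a smaller quotient. Since $n>2$ and $\bar n\le2$, $K$ has a prime-order subgroup to quotient by.

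An alternative is a direct argument: apply Proposition~\ref{3closed} to a Frobenius group $\Z_p\rtimes H$ with $H$ abelian, and build the non-CI structure on $G$ from $F=\Z_p\rtimes\mathrm{Im}(\alpha)$ crossed with $\Z_k$. Here the parity condition would ensure the relevant non-conjugate regular subgroups are isomorphic to $G$.
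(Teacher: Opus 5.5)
There is a genuine gap: the case $|\mathrm{Im}(\alpha)|\le 2$ is never proved.

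Your main case, $2<|\mathrm{Im}(\alpha)|<p-1$, is correct. Pulling back the $3$-orbit structure of the inner holomorph of $F'=G/\Ker(\alpha)$ gives a Cayley structure on $G$ with automorphism group $S_k\wr(F'_LF'_R)$. In it, $G_L$ and $G_R$ cannot be conjugate: any conjugator would induce an element of $F'_LF'_R$ conjugating $F'_L$ to $F'_R$, and these are distinct normal subgroups. (This argument never uses the parity hypothesis.)

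The uncovered case $|\mathrm{Im}(\alpha)|\le 2$ is not a side case. When $n=4$, every non-injective $\alpha$ has image of order $1$ or $2$, so your main case is empty. Yet these groups are exactly $\Z_{4p}$ and the dicyclic group of order $4p$ needed in Corollary~\ref{cor3}. Neither of your suggestions closes the gap:
\begin{itemize}
\item For every $K'\le\Ker(\alpha)$, the quotient $G/K'$ is $\Z_p\rtimes_{\alpha'}\Z_{n/|K'|}$ with $\mathrm{Im}(\alpha')=\mathrm{Im}(\alpha)$. Passing to it only reproduces the same situation in a smaller group. Since lifting transfers non-CI only from a quotient up to $G$, you would need a base case that you do not have.
\item Proposition~\ref{3closed} requires $|H|>2$, so it cannot be applied with $H=\mathrm{Im}(\alpha)$.
\end{itemize}

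The missing idea is to avoid quotients altogether. Use the Frobenius group $F=\Z_p\rtimes\sg{\omega}$ with $o(\omega)=n$. It has the same order $pn$ as $G$, and its inner holomorph $F_LF_R\cong F\times F$ is $3$-closed by Corollary~\ref{cor1}, because $2<n<p-1$ by hypothesis.

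Write $\alpha(\omega^i)=\omega^{ai}$, and choose $b$ such that $b$ and $a-b$ are both units modulo $n$. By the Chinese remainder theorem, such $b$ exists exactly when $a$ is even whenever $n$ is even. Since $|\Ker(\alpha)|=\gcd(a,n)$, this is precisely the parity hypothesis.

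Now set $G_1=\{[(x,\omega^{ai}),(0,\omega^{bi})]\}$ and $G_2=\{[(0,\omega^{bi}),(y,\omega^{ai})]\}$. These subgroups satisfy:
\begin{itemize}
\item they are isomorphic to $G$, because $b$ is a unit;
\item they are regular, because $a-b$ is a unit, so they meet $\inn(F)$ trivially;
\item they are not conjugate in $F_LF_R$, because their Sylow $p$-subgroups are the distinct normal subgroups $(\Z_p)_L$ and $(\Z_p)_R$.
\end{itemize}
Lemma~\ref{Babai lemma} then finishes the proof. This handles every non-injective $\alpha$ uniformly, including the cyclic case $\alpha=1$.
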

	
	\begin{proof}Let $\omega$ be a primitive $n$th root of unity in $\Z_p$.  
		Without loss of generality we may assume that 
		$\alpha\colon \langle \omega \rangle \rightarrow \langle \omega \rangle$ 
		is an endomorphism of $\langle \omega \rangle$, that is, 
		$\alpha(\omega^i)=\omega^{ai}$ for some $a\in \Z_n$.  
		It follows from the assumptions that $a\notin \Z_n^*$, and that $a$ is even whenever $n$ is even.  
		These conditions ensure the existence of $b\in \Z_n^*$ such that $a-b$ is invertible in $\Z_n$.
		
		Consider the Frobenius group 
		$F=\Z_p\rtimes \Z_n = \{(x,\omega^i)\,:\,i\in\Z_n,\ x\in\Z_p\}$.  
		By Corollary~\ref{cor1}, the group $F_L\cdot F_R\cong F\times F$ is $3$-closed.  
		Write elements of $F\times F$ as $[(x,\omega^i),(y,\omega^j)]$ with $x,y\in\Z_p$ and $i,j\in\Z_n$.  
		In this notation we have:
		$$
		\begin{array}{rcl}
			F_L & = & \{[(x,\omega^i),(0,1)] \mid x\in \Z_p,\ i\in\Z_n\},\\ 
			F_R & = & \{[(0,1),(y,\omega^j)] \mid y\in \Z_p,\ j\in\Z_n\},\\
			(F_L\cdot F_R)_e = \mathsf{Inn}(F) 
			& = & 
			\{[(x,\omega^i),(x,\omega^i)] \mid x\in \Z_p,\ i\in\Z_n\}.
		\end{array}
		$$
		It is easy to check that the subsets
		\begin{align*}
			G_1 &= \{[(x,\omega^{ai}),(0,\omega^{bi})] : x\in\Z_p,\ i\in\Z_n\},\\
			G_2 &= \{[(0,\omega^{bi}),(y,\omega^{ai})] : y\in\Z_p,\ i\in\Z_n\}
		\end{align*}
		are two regular subgroups of $F_L\cdot F_R$, each isomorphic to $G$.  
		Regularity follows from the fact that 
		\[
		G_1 \cap \mathsf{Inn}(F)
		= \{[(0,\omega^{ai}),(0,\omega^{bi})] : ai = bi,\ i\in\Z_n\}
		\]
		has size $1$, and likewise for $G_2$.  
		Moreover, $G_1$ and $G_2$ are not conjugate in $F_L\cdot F_R$ because their Sylow $p$-subgroups $(G_i)_p$, $i=1,2$, are normal in $F_L\cdot F_R$ and are distinct.
	\end{proof}

	\begin{remark} If $\alpha$ is the trivial homomorphism, then the group $G$ in Corollary~\ref{cor2} is a direct product. If $\alpha$ is injective, then $\Z_p\rtimes_\alpha \Z_n$ is a Frobenius group of order $pn$.
	\end{remark}
	
	As an immediate consequence of Corollary~\ref{cor2} we obtain the following.

	\begin{cor}\label{cor3}
		Let $p > 5$ be a prime with $p \equiv 1 \pmod 4$. Then the groups $\Z_{4p}$, the Frobenius group $F_{4p}$ of order $4p$, and the dicyclic group of order $4p$ are not CI-groups with respect to ternary relational structures.
	\end{cor}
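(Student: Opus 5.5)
The three groups split according to which earlier result applies. The Frobenius group $F_{4p}$ is handled by Corollary~\ref{cor1}, and the cyclic and dicyclic groups are handled by Corollary~\ref{cor2}, in both cases taking the parameter $n=4$. Throughout, $p\equiv 1\pmod 4$ means $4\mid p-1$, so $\Z_p^*$, being cyclic, contains a primitive $4$th root of unity $\omega$. The restriction $p>5$ is exactly the inequality $4<p-1$ needed in both corollaries. Together with $2<4$, this gives the condition $2<n<p-1$ for $n=4$.

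\emph{The Frobenius group.} Set $F=\Z_p\rtimes\langle\omega\rangle$, where $\omega$ acts by multiplication. Since $\omega$ has order $4$ in $\Z_p^*$, this action is fixed-point-free on $\Z_p\setminus\{0\}$. Hence $F$ is the Frobenius group $F_{4p}$ of order $4p$; it is unique up to isomorphism because $\Z_p^*$ has a unique subgroup of order $4$. Corollary~\ref{cor1} with $n=4$ then shows that $F_{4p}$ is not a CI-group with respect to ternary relational structures.

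\emph{The cyclic and dicyclic groups.} Here I apply Corollary~\ref{cor2} with $n=4$, which divides $p-1$. Its hypotheses require $\alpha\colon\Z_4\to\Z_p^*$ to be non-injective and, since $n=4$ is even, to have $|\Ker(\alpha)|$ even. So $|\Ker(\alpha)|\in\{2,4\}$, and there are two choices.
\begin{enumerate}
\item If $|\Ker(\alpha)|=4$, then $\alpha$ is trivial. Hence $\Z_p\rtimes_\alpha\Z_4=\Z_p\times\Z_4\cong\Z_{4p}$.
\item If $|\Ker(\alpha)|=2$, then $\alpha$ sends a generator $y$ of $\Z_4$ to $-1$. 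Then $y$ inverts $\Z_p$ and $y^2$ is central, so $\Z_p\rtimes_\alpha\Z_4=\langle x,y\mid x^p=y^4=1,\ y^{-1}xy=x^{-1}\rangle$. This is the dicyclic group of order $4p$.
\end{enumerate}
In each case Corollary~\ref{cor2} says the group is not a CI-group with respect to ternary relational structures.

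There is no real obstacle in this argument. The only points needing care are the arithmetic check $2<4<p-1\iff p>5$ and the identification of the two semidirect products above as the standard cyclic and dicyclic groups of order $4p$.
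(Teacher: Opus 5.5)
Your proposal is correct and takes the paper's route: Corollary~\ref{cor2} with $n=4$ and $|\Ker(\alpha)|\in\{4,2\}$ gives $\Z_{4p}$ and the dicyclic group, and the Frobenius group $F_{4p}$ (the injective case, which Corollary~\ref{cor2} excludes) comes from Corollary~\ref{cor1}. The paper just calls the statement an immediate consequence of Corollary~\ref{cor2} and leaves the use of Corollary~\ref{cor1} for $F_{4p}$ implicit, so your version is, if anything, more explicit.
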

	
	\begin{example}\label{p is 5 sorted}{\rm
			There exists a unique permutation group of degree 20, up to permutation equivalence, that is 3-closed and that contains more than one conjugacy class of subgroups isomorphic to $H=\sg{x,y:x^5=y^4, x^y=x^{-1}}$.  This group has order $400$ and has exactly $2$ conjugacy classes of such subgroups, and is group $102$ in the Magma list of transitive groups of degree $20$.}
	\end{example}
	
	\begin{cor}\label{dicyclic cor}
		Let $n = km$ be odd with $3\nmid m$.  Then $\Z_k\times(\Z_m\rtimes\Z_4)$ is a CI-group with respect to ternary relational structures if and only if $\gcd(n,\varphi(n)) = 1$ and whenever $p$ divides $n$ is prime, $p$ satisfies $p\not\equiv 1 \pmod{4}$.
	\end{cor}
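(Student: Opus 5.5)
The proof splits into the two directions of the equivalence.

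\emph{Sufficiency.} Suppose $\gcd(n,\varphi(n))=1$ and every prime divisor of $n$ is $\not\equiv 1 \pmod 4$. Since $n$ is odd, these are exactly the primes $\equiv 3\pmod 4$. With $3\nmid m$, Theorem~\ref{231221a} applies directly and shows that $\Z_k\times(\Z_m\rtimes\Z_4)$ is a CI-group with respect to ternary relational structures. Behind this sits Corollary~\ref{111021a}, whose hypothesis $\gcd(|\Aut(\Z_4)|,n)=\gcd(2,n)=1$ holds. It produces a conjugate of the second regular subgroup such that the generated group has four blocks of size $n$, and Theorem~\ref{dicyclic-conditional} then finishes. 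Nothing further is needed here beyond checking that the hypotheses line up.

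\emph{Necessity.} Assume $G=\Z_k\times(\Z_m\rtimes\Z_4)$ is a ternary CI-group. If $m=1$ the group is cyclic, so I would treat the cyclic case through P\'alfy's theorem and the known cyclic ternary results. Even there $\gcd(n,\varphi(n))=1$ is forced, and a prime $p\equiv1\pmod 4$ dividing $n$ gives a subgroup $\Z_{4p}$. Otherwise $G$ is nonabelian, and Corollary~\ref{previous-nonabelian} yields $\gcd(nm',\varphi(nm'))=1$ for the relevant decomposition, hence $\gcd(n,\varphi(n))=1$. It remains to exclude a prime $p\mid n$ with $p\equiv 1\pmod 4$. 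For such $p$, $G$ contains a subgroup of order $4p$ of the form $\Z_p\times\Z_4$ (if $p\mid k$ or $p\mid m$ with a trivial action) or $\Z_p\rtimes\Z_4$ with $y$ acting by inversion, which is dicyclic of order $4p$. The CI-property with respect to ternary structures is inherited by subgroups; this is the standard fact (Babai) used implicitly in Corollary~\ref{previous-nonabelian}. By Corollary~\ref{cor3}, for $p>5$ neither $\Z_{4p}$ nor the dicyclic group of order $4p$ is a ternary CI-group, which is a contradiction.

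The main obstacle is $p=5$, since Corollary~\ref{cor3} requires $p>5$. In that case the subgroup is $\Z_{20}$ or $\sg{x,y: x^5=y^4, x^y=x^{-1}}$. For the dicyclic group I would invoke Example~\ref{p is 5 sorted}: the $3$-closed group of degree $20$ and order $400$ contains two nonconjugate regular copies of it, so by Lemma~\ref{Babai lemma} applied to a ternary structure with that automorphism group, the dicyclic group of order $20$ is not ternary CI. For $\Z_{20}$ I would rely on P\'alfy's classification: $\gcd(20,\varphi(20))\ne1$ and $20\neq4$, so $\Z_{20}$ fails even the $4$-ary condition. That alone is not enough for the ternary case, so I would instead observe that Corollary~\ref{previous-nonabelian} already forbids $p\equiv1\pmod4$ dividing the central cyclic factor when $V$ is dicyclic. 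This handles every prime of $k$, leaving only primes of $m$, where the subgroup is dicyclic and the argument above applies.
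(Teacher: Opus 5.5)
Your sufficiency direction and your treatment of the dicyclic subgroup of order $4p$ (Corollary~\ref{cor3} for $p>5$, Example~\ref{p is 5 sorted} for $p=5$) agree with the paper. Using Corollary~\ref{cor3} instead of \cite[Theorem 4.2]{DobsonS2013} to rule out $\Z_{4p}$ when $p>5$ is also fine.

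The gap is the subgroup $\Z_5\times\Z_4\cong\Z_{20}$ when $G$ is abelian. For example, $m=1$ gives $G\cong\Z_{4k}$, possibly with $5\mid k$. The statement allows this, and it also allows $\Z_4$ to act trivially on $\Z_m$. None of your tools covers this case:
\begin{itemize}
\item Corollary~\ref{cor3} needs $p>5$, because Corollary~\ref{cor2} requires $4=n<p-1$.
\item P\'alfy's theorem only concerns $4$-ary structures, as you note yourself.
\item The ``Moreover'' clause of Corollary~\ref{previous-nonabelian} is a statement about \emph{nonabelian} CI-groups, so it says nothing about $\Z_{4k}$ or its subgroups.
\end{itemize}
Your fallback therefore only works when $G$ is nonabelian. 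There it is correct, since the odd central primes of $G$ are exactly the primes dividing $|U|$.

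What is missing is a proof that $\Z_p\times\Z_4$ is not a CI-group with respect to ternary relational structures for every prime $p\equiv 1\pmod 4$, including $p=5$. The paper takes exactly this from \cite[Theorem 4.2]{DobsonS2013} and applies it to the whole trivial-action case. Corollary~\ref{cor3} and Example~\ref{p is 5 sorted} are then needed only for the nontrivial action. If you replace your fallback with that citation, your argument goes through.

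Separately, your appeal to unspecified ``known cyclic ternary results'' to force $\gcd(n,\varphi(n))=1$ when $m=1$ should name the result used. The paper is equally brief on that point.
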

	
	\begin{proof} The sufficiency of the condition $p\not\equiv 1\ \pmod{4}$ follows from Theorem~\ref{231221a}. To prove the necessity we need to show that for each prime $p\equiv 1\ \pmod{4}$ the group $\Z_p\rtimes\Z_4$ is not CI with respect to ternary relational structures.
		
		If $\Z_4$ acts trivially on $\Z_p$, then $\Z_p\rtimes\Z_4\cong \Z_p\times\Z_4$ and the result follows from~\cite[Theorem 4.2]{DobsonS2013}. If the action is non-trivial, then the result follows from Corollary~\ref{cor3} when $p>5$ and Example \ref{p is 5 sorted} when $p=5$.
	\end{proof}
	
	\section*{Acknowledgements}
	The fourth author is funded by the European Union via the Next
	Generation EU (Mission 4 Component 1 CUP B53D23009410006, PRIN 2022, 2022PSTWLB, Group
	Theory and Applications). The second author's research is supported in part by an NSERC Discovery Research Grant, RGPIN-2024-04013.  This work is supported in part by the Slovenian Research and Innovation Agency (ARIS), research program P1-0285 and research projects J1-50000, J1-70035, J1-70047, and J1-70046.
	
    \providecommand{\bysame}{\leavevmode\hbox to3em{\hrulefill}\thinspace}
\providecommand{\MR}{\relax\ifhmode\unskip\space\fi MR }
\providecommand{\MRhref}[2]{%
  \href{http://www.ams.org/mathscinet-getitem?mr=#1}{#2}
}
\providecommand{\href}[2]{#2}

	
	
	
\end{document}